\numberwithin{equation}{section}
\newtheorem{theorem}{Theorem}[section]
\newtheorem{lemma}[theorem]{Lemma}
\newtheorem{proposition}[theorem]{Proposition}
\theoremstyle{definition}
\theoremstyle{remark}
\newtheorem{remark}[theorem]{Remark}
\begin{document}
\title{Existence and stability of cylindrical transonic shock solutions under three dimensional perturbations}
\author{Shangkun WENG\thanks{School of Mathematics and Statistics, Wuhan University, Wuhan, Hubei Province, 430072, People's Republic of China. Email: skweng@whu.edu.cn}\and Zhouping XIN\thanks{The Institute of Mathematical Sciences and Department of Mathematics, The Chinese University of Hong Kong, Shatin, NT, Hong Kong. Email: zpxin@ims.cuhk.edu.hk}}
\date{}

\pagestyle{myheadings} \markboth{Cylindrical transonic shock}{Cylindrical transonic shock}\maketitle

\def\be{\begin{eqnarray}}
\def\ee{\end{eqnarray}}
\def\ba{\begin{aligned}}
\def\ea{\end{aligned}}
\def\bay{\begin{array}}
\def\eay{\end{array}}
\def\bca{\begin{cases}}
\def\eca{\end{cases}}
\def\p{\partial}
\def\no{\nonumber}
\def\e{\epsilon}
\def\de{\delta}
\def\De{\Delta}
\def\om{\omega}
\def\Om{\Omega}
\def\f{\frac}
\def\th{\theta}
\def\la{\lambda}
\def\lab{\label}
\def\b{\bigg}
\def\var{\varphi}
\def\na{\nabla}
\def\ka{\kappa}
\def\al{\alpha}
\def\La{\Lambda}
\def\ga{\gamma}
\def\Ga{\Gamma}
\def\ti{\tilde}
\def\wti{\widetilde}
\def\wh{\widehat}
\def\ol{\overline}
\def\ul{\underline}
\def\Th{\Theta}
\def\si{\sigma}
\def\Si{\Sigma}
\def\oo{\infty}
\def\q{\quad}
\def\z{\zeta}
\def\co{\coloneqq}
\def\eqq{\eqqcolon}
\def\di{\displaystyle}
\def\mb{\mathbb}
\def\ms{\mathscr}
\def\lan{\langle}
\def\ran{\rangle}
\def\lb{\llbracket}
\def\rb{\rrbracket}

\begin{abstract}
  We establish the existence and stability of cylindrical transonic shock solutions under three dimensional perturbations of the incoming flows and the exit pressure without any further restrictions on the background transonic shock solutions. The strength and position of the perturbed transonic shock are completely free and uniquely determined by the incoming flows and the exit pressure. The optimal regularity is obtained for all physical quantities, and the velocity, the Bernoulli's quantity, the entropy and the pressure share the same regularity. The approach is based on the deformation-curl decomposition to the steady Euler system introduced by the authors to decouple the hyperbolic and elliptic modes effectively. However, one of the key elements in application of the deformation-curl decomposition is to find a decomposition of the Rankine-Hugoniot conditions, which shows the mechanism of determining the shock front uniquely by an algebraic equation and also gives an unusual second order differential boundary conditions on the shock front for the first order deformation-curl system. After homogenizing the curl system and introducing a potential function, this unusual condition on the shock front becomes the Poisson equation with homogeneous Neumann boundary condition on the intersection of the shock front and the cylinder walls from which an oblique boundary condition for the potential function can be uniquely derived.
\end{abstract}

\begin{center}
\begin{minipage}{5.5in}
Mathematics Subject Classifications 2020: 35L67, 35M12, 76N10, 76N15, 76N30.\\
Key words: Transonic shock, hyperbolic-elliptic mixed, deformation-curl decomposition, Rankine-Hugoniot conditions.
\end{minipage}
\end{center}

\section{Introduction and main results}\label{sec1}

In this paper, we study the transonic shock problem in a de Laval nozzle described by Courant and Friedrichs (\cite[Page 386]{cf48}): given appropriately large receiver pressure $P_e$, if the upstream flow is still supersonic behind the throat of the nozzle, then at a certain place in the diverging part of the nozzle a shock front intervenes and the gas is compressed and slowed down to subsonic speed. The position and the strength of the shock front are automatically adjusted so that the end pressure at the exit becomes $P_e$. This problem can be described by the three-dimensional steady compressible Euler system:
\be\label{com-euler}\begin{cases}
\text{div }(\rho {\bf u})=0,\\
\text{div }(\rho {\bf u}\otimes {\bf u}+ P I_3) =0,\\
\text{div }(\rho (\f12|{\bf u}|^2 +e) {\bf u}+ P{\bf u}) =0,
\end{cases}\ee
where ${\bf u}=(u_1, u_2,u_3)$, $\rho$, $P$, and $e$ stand for the velocity, density, pressure, and internal energy, respectively. For polytropic gases, the equation of state and the internal energy are of the form
\begin{equation}\no
P=K(S)\rho^{\ga}=A  e^{\frac{S}{c_v}}\rho^{\gamma}\quad \text{and}\quad
e=\f{P}{(\ga-1)\rho},\ \ \ \gamma>1,
\end{equation}
respectively, where $\gamma> 1$, $A$, and $c_v$ are positive constants, and $S$ is called the specific entropy. The system \eqref{com-euler} is  hyperbolic for supersonic flows ($M_a>1$),  hyperbolic-elliptic coupled for subsonic flows ($M_a<1$), and degenerate at sonic points (i.e. $M_a=1$), where $M_a=\frac{|{\bf u}|}{c(\rho,K)}$ is the Mach number of the flow with $c(\rho,K)=\sqrt{\partial_\rho P(\rho, K)}$ being the local sound speed. $B=\frac{|{\bf u}|^2}{2}+e+\frac{P}{\rho}$ is called the Bernoulli's quantity.

The studies on transonic shocks using the quasi-one-dimensional model can be found in \cite{cf48,egm84,liu82}. There are several typical transonic shock solutions with symmetry to the steady Euler system that are well-known and had been investigated extensively. One is the transonic shock solution in a duct with both upstream supersonic state and downstream subsonic state being constant and its shock position can be arbitrary.  The structural stability of these transonic shocks for multidimensional steady potential flows in nozzles was studied in \cite{cf03,cf04,xy05, xy08a}. The authors in \cite{xy05,xy08a} showed that the stability of transonic shocks for potential flows is usually ill-posed under the perturbations of the exit pressure. Many researchers also used the steady Euler system to study the transonic shock problem in the flat or almost flat nozzles with the exit pressure satisfying some special constraints, see \cite{chen05,chen08,cy08,ly08,xyy09} and the references therein. The authors in \cite{chen08,cy08} had used the characteristic decomposition of the steady Euler system to prove the structural stability of the transonic shock in a rectangle cylinder or a flat nozzle with general section under the requirement that the shock front must pass through a fixed point and the exit pressure can only be prescribed up to a constant. Recently, the authors in \cite{fx21} have established the stability and existence of transonic shock solutions to the two dimensional  steady compressible Euler system in an almost flat finite nozzle with the exit pressure, where the shock position was uniquely determined by solving an elaborate linear free boundary problem. See also the three dimensional axisymmetric generalization in \cite{fg21}.

The other are the radially symmetric transonic shock in a divergent sector and the spherically symmetric transonic shock in a conic cone in which the shock position is uniquely determined by the exit pressure. The authors in \cite{lxy09a} had proved the well-posedness of the transonic shock problem in two dimensional divergent nozzles under the perturbations for the exit pressure when the opening angle of the nozzle is suitably small. This restriction was removed in \cite{lxy09b} and the transonic shock in a two dimensional straight divergent nozzle is shown in \cite{lxy13} to be structurally stable under generic perturbations for both the nozzle walls and the exit pressure. One of the key ideas in \cite{lxy13} is to introduce a Lagrangian transformation to straighten the streamlines and reduce the transonic shock problem to a second order elliptic equation with a nonlocal term and an unknown parameter together with an ODE for the shock front. In \cite{lxy10a,lxy10b}, the existence and stability of transonic shocks for three dimensional axisymmetric flows without swirl in a conic nozzle were proved to be structurally stable under suitable perturbations of the exit pressure. For the structural stability under the axisymmetric perturbation of
the nozzle wall, a modified Lagrangian coordinate was introduced in \cite{wxx} to
deal with the corner singularity near the intersection points of the shock surface
and nozzle boundary and the artificial singularity near the axis simultaneously. The authors in \cite{lxy11} proved the uniqueness of transonic shock solutions in a conic nozzle without requiring the nonphysical assumption on the shock front past a fixed point and established the monotonicity of the shock position relative to the end pressure. There have been many other interesting results on transonic shock problems in a nozzle for different models with various exit boundary conditions, such as the non-isentropic potential model, the exit boundary condition for the normal velocity, the transonic shock flows in a spherical shell, etc, see \cite{bf11, ccf07, lxy16} and references therein. In particular, the authors in \cite{lxy16} proved the conditional structural stability of the spherical transonic shock in a spherical shell under the perturbations of supersonic incoming flows and the exit pressure, which required that the background transonic shock solutions satisfy some ``Structure Condition".

Most recently, the authors in \cite{wxy21a,wxy21b} studied radially
symmetric transonic spiral flows with/without shock in an annulus. It is interesting to notice that the angular velocity may induce new wave patterns. Indeed, it was found in \cite{wxy21a} that besides the well-known
supersonic-subsonic shock, there exists a supersonic-supersonic shock solution, where the downstream
state may change smoothly from supersonic to subsonic. Furthermore, there exists a
supersonic-sonic shock solution where the shock circle and the sonic circle coincide.

In the cylindrical coordinate
\be\no
x_1=r\cos\theta,\ x_2=r\sin\theta,\ x_3=x_3,
\ee
the velocity field can be represented as ${\bf u}(x)=U_1{\bf e}_r+U_2{\bf e}_\th+U_3{\bf e}_3$, where
\be\no
{\bf e}_r=(\cos\theta, \sin\theta, 0)^t, \ \ {\bf e}_\theta=(-\sin\theta,\cos\theta,0)^t,\ \ {\bf e}_3=(0,0,1)^t.
\ee
Then the steady Compressible Euler equations in cylindrical coordinates take the form
\begin{eqnarray}\label{euler-cyl}
\begin{cases}
\p_r(\rho U_1)+\frac1r \rho U_1+\frac{1}{r}\p_{\theta} (\rho U_2) + \p_{x_3} (\rho U_3)=0,\\
(U_1\p_r +\frac{U_2}{r}\p_{\theta}+U_3\p_{x_3}) U_1+\frac1\rho \p_r P-\frac{U_2^2}r=0,\\
(U_1\p_r +\frac{U_2}{r}\p_{\theta}+U_3\p_{x_3}) U_2+\frac1{r\rho} \p_{\theta} P+\frac{U_1U_2}r=0,\\
(U_1\p_r +\frac{U_2}{r}\p_{\theta}+U_3\p_{x_3}) U_3+\frac1\rho \p_{x_3}P=0,\\
(U_1\p_r +\frac{U_2}{r}\p_{\theta}+U_3\p_{x_3}) K=0.
\end{cases}
\end{eqnarray}
The flow region is assumed to be a part of a concentric cylinder described as
\be\no
\Omega=\{(r,\theta,x_3):r_1<r<r_2,(\theta,x_3)\in E\}, \ \ E:=(-\theta_0,\theta_0)\times (-1,1),
\ee
where $0<r_1<r_2<\infty,\theta_0\in (0,\frac{\pi}{2})$ are fixed positive constants.

Suppose the incoming supersonic flow is prescribed at the inlet $r=r_1$, i.e.,
\begin{equation}\no
({\bf u}^-, {\bar P}^-, \bar{K}^-)(r_1,\theta,x_3)=(\bar{U}^-(r_1){\bf e}_r,\bar{P}^-(r_1),\bar{K}^-), \ \forall (\theta,x_3)\in E,
 \end{equation}
where $\bar{U}^-(r_1)>c(\bar{\rho}^-(r_1),\bar{K}^-)>0$ with $\bar{K}^-$ being a constant. Then there exist two positive constants $P_1$ and $P_2$ depending only on the incoming supersonic flows and the nozzle, such that if the pressure $P_e\in (P_1, P_2)$ is given at the exit $r=r_2$,  then there exists a unique piecewise smooth cylindrically symmetric transonic shock solution
\be\no
\overline{\bm{\Psi}}(r,\theta,x_3)= (\overline{{\bf u}},\bar{\rho},\bar{K})({\bf x})=\left\{
\begin{aligned}
&{\overline{\bm{\Psi}}}^-(r,\theta,x_3):=(\bar{U}^{-}(r),0, 0, \bar{\rho}^{-}(r),\bar{K}^{-}),\,\,\text{in }\Omega_b^-\\
&{\overline{\bm{\Psi}}}^+(r,\theta,x_3):=(\bar{U}^{+}(r), 0, 0, \bar{\rho}^{+}(r),\bar{K}^{+}),\,\,\text{in }\Omega_b^+
\end{aligned}
\right.
\ee
to \eqref{euler-cyl} with a shock front located at $r=r_s\in (r_1,r_2)$, where
\begin{eqnarray}\nonumber
&&\Om_{b}^-=\{(r,\theta,x_3): r\in (r_1,r_s),(\theta,x_3)\in E\},\\\nonumber
&&\Om_{b}^+=\{(r,\theta,x_3): r\in (r_s,r_2),(\theta,x_3)\in E\}.
\end{eqnarray}
Across the shock, the Rankine-Hugoniot conditions and the physical entropy condition are satisfied:
\begin{equation}\no
[\bar{\rho} \bar{U}]|_{r=r_s}=0,\quad [\bar{\rho} \bar{U}^2+\bar{P}]|_{r=r_s}=0,\quad [\bar{B}]|_{r=r_s}=0, \quad \bar{K}^+>\bar{K}^-,
\end{equation}
where $[g]|_{r=r_s}:=g(r_s+)-g(r_s-)$ denotes the jump of $g$ at $r=r_s$. Later on, this special solution, ${\bf \overline{\Psi}}$, will be called the background solution. Clearly, one can extend the supersonic and subsonic parts of ${\bf \overline{\Psi}}$ in a natural way, respectively. With an abuse of notations, we still call the extended subsonic and supersonic solutions ${\bf\overline{\Psi}}^+$ and ${\bf\overline{\Psi}}^-$, respectively.  For detailed properties of this cylindrically symmetric transonic shock solution, we refer to \cite[Section 147]{cf48} or \cite[Theorem 1.1]{xy08b}. The main goal of this paper is to establish the structural stability of this cylindrically symmetric transonic shock solution under generic three dimensional perturbations of the incoming supersonic flows and the exit pressure.

Let the incoming supersonic flow at the inlet $r=r_1$ be prescribed as
by \begin{equation}\lab{super1}
{\bf \Psi}^-(r_1, \theta,x_3)={\bf \overline{\Psi}}^-(r_1)+ \epsilon (U_{1,0}^-,U_{2,0}^-,U_{3,0}^-, P_0^-,K_0^-)(\theta,x_3),
\end{equation}
where
$(U_{1,0}^-,U_{2,0}^-,U_{3,0}^-, P_0^-,K_0^-)\in (C^{2,\alpha}(\overline{E}))^5$.
The flow satisfies the slip condition ${\bf u}\cdot {\bf n}$=0 on the nozzle wall, where ${\bf n}$ is the outer normal of the nozzle wall, which in the cylindrical coordinates, can be written as
\be\lab{slip1}\begin{cases}
U_2(r,\pm \theta_0,x_3) =0\quad \ &\text{on } \Gamma_{2,\pm}:=\{(r,\pm\theta_0, x_3): r_1\leq r\leq r_2, -1\leq x_3\leq 1\},\\
U_3(r,\theta,\pm 1) =0\quad \ &\text{on } \Gamma_{3,\pm}:=\{(r,\theta,\pm 1): r_1\leq r\leq r_2, -\theta_0\leq \theta\leq \theta_0\}.
\end{cases}\ee
At the exit of the nozzle, the end pressure is prescribed by
\be\lab{pressure}
P(r_2,\theta,x_3)= P_e + \epsilon P_{ex}(\theta,x_3)\ \text{at}\ \ \Gamma_o:=\{(r_2,\theta,x_3): (\theta,x_3)\in E\},
\ee
here $P_{ex}\in C^{2,\alpha}(\overline{E})$ satisfies the compatibility conditions
\be\label{pressure-cp}\begin{cases}
\p_{\theta} P_{ex}(\pm \theta_0,x_3)=0,\ \  \ &\forall x_3\in [-1,1],\\
\p_{x_3} P_{ex}(\theta,\pm 1)=0,\ \  \ &\forall \theta\in [-\theta_0,\theta_0].
\end{cases}\ee

The goal is to find a piecewise smooth solution $\bm\Psi$ to \eqref{euler-cyl} supplemented with the boundary conditions \eqref{super1}, \eqref{slip1}, and \eqref{pressure}, which jumps only at a shock front $\mathcal{S}=\{(r, \theta,x_3): r=\xi(\theta,x_3), (\theta,x_3) \in E\}$. More precisely, $\bm\Psi$ has the form
\begin{equation}\nonumber
\bm\Psi=\begin{cases}
(U_1^-, U_2^-, U_3^-, P^-, K^-),\quad\text{in}\,\, \Omega^-=\{(r,\theta,x_3):r_1< r<\xi(\theta,x_3),(\theta,x_3)\in E\},\\
(U_1^+, U_2^+, U_3^+, P^+, K^+),\quad\text{in}\,\, \Omega^+=\{(r,\theta,x_3):\xi(\theta,x_3)<r<r_2,(\theta,x_3)\in E\},
\end{cases}
\end{equation}
and satisfies the Rankine-Hugoniot conditions on the shock front $r=\xi(\theta, x_3)$:
\begin{eqnarray}\label{rh}
\begin{cases}
[\rho U_1]-\frac{1}{\xi}\p_{\theta}\xi [\rho U_2]-\p_{x_3}\xi[ \rho U_3]=0,\\
[\rho U_1^2+P]-\frac{1}{\xi}\p_{\theta}\xi[ \rho U_1U_2]-\p_{x_3}\xi[ \rho U_1U_3]=0,\\
[\rho U_1U_2]-\frac{1}{\xi}\p_{\theta}\xi[ \rho U_2^2+P]-\p_{x_3}\xi[ \rho U_2U_3]=0,\\
[\rho U_1U_3]-\frac{1}{\xi}\p_{\theta}\xi[ \rho U_2U_3]-\p_{x_3}\xi[ \rho U_3^2+P]=0,\\
[B]=0.\\
\end{cases}
\end{eqnarray}

The existence and uniqueness of the supersonic flow to \eqref{euler-cyl} follows from the theory of classical solutions to the boundary value problem for quasi-linear symmetric hyperbolic equations (See \cite{bs07}).
\begin{lemma}\label{supersonic}
{\it For the incoming data given in \eqref{super1} satisfying the compatibility conditions
\be\lab{super3}\begin{cases}
(U_{2,0}^-,\p_{\theta}^2U_{2,0}^-)(\pm\theta_0,x_3)=\p_{\theta}(U_{1,0}^-,U_{3,0}^-, P_0^-,K_0^-)(\pm\theta_0, x_3)=0,\ \forall x_3\in [-1,1],\\
(U_{3,0}^-,\p_{x_3}^2 U_{3,0}^-)(\theta,\pm 1)=\p_{x_3}(U_{1,0}^-,U_{2,0}^-, P_0^-,K_0^-)(\theta, \pm 1)=0,\  \forall x_2\in [-\theta_0,\theta_0],
\end{cases}\ee
then there exists $\epsilon_0>0$ depending only on the background solution and the boundary data, such that for any $0<\epsilon<\epsilon_0$, there exists a unique $C^{2,\alpha}(\overline{\Omega})$ solution $\bm\Psi^-=(U_1^-,U_2^-,U_3^-,P^-,K^-)(r,\theta,x_3)$ to \eqref{euler-cyl} with \eqref{super1} and \eqref{slip1}, which satisfies
\be\no
\|(U_1^-,U_2^-,U_3^-,P^-,K^-)-(\bar{U}^-,0,0,\bar{P}^-, \bar{K}^-)\|_{C^{2,\alpha}(\overline{\Omega})}\leq C_0\epsilon,
\ee
and
\be\lab{super5}
\begin{cases}
(U_{2}^-,\p_{\theta}^2U_{2}^-,p_{\theta}(U_{1}^-,U_{3}^-, P^-,K^-))(r,\pm\theta_0, x_3)=0,\ \forall (r,x_3)\in [r_1,r_2]\times [-1,1],\\
(U_{3}^-,\p_{x_3}^2U_{3}^-,\p_{x_3}(U_{1}^-,U_{2}^-, P^-,K^-))(r,\theta, \pm 1)=0, \forall (r,\theta)\in [r_1,r_2]\times [-\theta_0,\theta_0].
\end{cases}
\ee
}\end{lemma}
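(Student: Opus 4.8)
The plan is to regard the radial variable $r$ as the evolution (``time'') direction for the steady Euler system \eqref{euler-cyl}. Writing $\bm\Psi^-=(U_1^-,U_2^-,U_3^-,P^-,K^-)$, the incoming data \eqref{super1} is a size-$\e$ perturbation of the purely radial supersonic state $\ol{\bm\Psi}^-(r_1)$, so for $\e$ small the solution remains supersonic with $U_1^-$ dominating $|U_2^-|$ and $|U_3^-|$; thus $r$ is non-characteristic, all characteristics propagate towards increasing $r$, and \eqref{euler-cyl} takes the form
\[
A_1(\bm\Psi^-)\p_r\bm\Psi^-+A_2(\bm\Psi^-)\p_\th\bm\Psi^-+A_3(\bm\Psi^-)\p_{x_3}\bm\Psi^-=\mathbf{f}(r,\bm\Psi^-),
\]
which is symmetrizable hyperbolic in the $r$-direction (a positive symmetrizer $A_0(\bm\Psi^-)$ exists in the supersonic regime), the source $\mathbf f$ collecting the lower-order geometric terms with the $1/r$ factors, smooth because $r\ge r_1>0$. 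I would first fix a neighborhood of the background state on which this structure is valid, so that no condition at $r=r_2$ is needed.

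Next I would reduce the initial--boundary value problem on $\Omega$ to a boundary-free Cauchy problem by even/odd reflection, exploiting the flat geometry and the slip conditions \eqref{slip1}. The system \eqref{euler-cyl} is invariant under $\th\mapsto-\th,\ U_2\mapsto-U_2$ (with $U_1,U_3,P,K$ even) and, similarly, under $x_3\mapsto-x_3,\ U_3\mapsto-U_3$. Reflecting successively across $\th=\pm\th_0$ and $x_3=\pm1$ and iterating produces a problem that is periodic in $\th$ with period $4\th_0$ and in $x_3$ with period $4$, i.e.\ a Cauchy problem for the above symmetric hyperbolic system with $r\in[r_1,r_2]$, $(\th,x_3)\in\mathbb{T}^2$, and initial data at $r=r_1$. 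The compatibility conditions \eqref{super3} are exactly those needed for the reflected initial data to lie in $C^{2,\alpha}(\mathbb{T}^2)$: vanishing of $\p_\th$ of the even components and of $U_{2,0}^-$ and $\p_\th^2U_{2,0}^-$ makes the extension $C^{2,\alpha}$ across $\th=\pm\th_0$, and analogously in $x_3$, while the corner compatibility at the vertices of $E$ is also contained in \eqref{super3}.

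Then I would apply the standard local existence theory for quasilinear symmetric hyperbolic Cauchy problems (see \cite{bs07}) to obtain a $C^{2,\alpha}$ solution on $[r_1,r_1+\de]\times\mathbb{T}^2$, $O(\e)$-close to the extended background $\ol{\bm\Psi}^-$, and then globalize it to all of $[r_1,r_2]\times\mathbb{T}^2$. Because $\ol{\bm\Psi}^-=(\bar U^-(r),0,0,\bar\rho^-(r),\bar K^-)$ is itself a smooth solution on the \emph{whole} fixed interval $r\in[r_1,r_2]$, closed a priori estimates for the difference $\bm\Psi^--\ol{\bm\Psi}^-$ in $C^{2,\alpha}$ (energy estimates in a sufficiently high Sobolev norm together with the equation and Sobolev/Schauder embedding, Gronwall in $r$) keep this difference $O(\e)$ on $[r_1,r_2]$ for $\e$ small, by a continuity argument, which also preserves the supersonic condition $U_1^->c(\rho^-,K^-)$ used at the outset. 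Restricting the solution to $(\th,x_3)\in\ol E$ yields a solution of \eqref{euler-cyl}, \eqref{super1}, \eqref{slip1} with the stated bound, and the reflection symmetry — inherited from that of the data by uniqueness for the Cauchy problem — gives \eqref{super5}, in particular the slip conditions on $\Ga_{2,\pm}$ and $\Ga_{3,\pm}$. For uniqueness within $C^{2,\alpha}(\ol\Omega)$ I would argue directly: the slip conditions \eqref{slip1} are conservative (maximally dissipative) boundary conditions for the symmetric hyperbolic system — the boundary matrices $A_2$ on $\th=\pm\th_0$ and $A_3$ on $x_3=\pm1$ vanish when restricted to $\{U_2=0\}$, resp.\ $\{U_3=0\}$ — so the energy estimate for the difference of two $C^{2,\alpha}$ solutions closes with no boundary contribution, forcing them to coincide.

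The \emph{main obstacle} is the globalization in $r$: the quasilinear local theory only yields a short $r$-interval of existence, and one must exclude finite-$r$ blow-up of the $C^{2,\alpha}$ norm inside the fixed window $[r_1,r_2]$. This is where it is essential that the background is a genuine global-in-$r$ solution and the data are an $O(\e)$ perturbation, so that the a priori estimates for $\bm\Psi^--\ol{\bm\Psi}^-$ remain $O(\e)$ and the bootstrap closes; a byproduct is that the hyperbolicity in the $r$-direction is not lost. A secondary point to be careful with is that the reflection must be carried out in both $\th$ and $x_3$ and be mutually consistent at the corners of $E$, which is guaranteed because \eqref{super3} forces the relevant tangential derivatives (e.g.\ $\p_{x_3}U_{2,0}^-$ and $\p_\th U_{3,0}^-$) to vanish at the vertices.
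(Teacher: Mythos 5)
Your proposal is correct but genuinely different from the paper's route, and the difference is most pronounced in how \eqref{super5} is obtained. The paper simply cites \cite{bs07} for the existence, uniqueness and $C^{2,\alpha}$ estimate of the solution to the initial--boundary value problem, and then proves the compatibility conditions \eqref{super5} by a propagation argument: differentiating the second, fourth and fifth equations of \eqref{euler-cyl} tangentially and restricting to the wall produces, after using the slip condition to kill $\p_\theta P^-$, a \emph{homogeneous} linear transport system for $(\p_\theta U_1^-,\p_\theta U_3^-,\p_\theta K^-)$; since \eqref{super3} makes the data for this system vanish at $r=r_1$, uniqueness for the transport system forces these tangential derivatives to vanish for all $r$, and one more differentiation of the continuity equation gives $\p_\theta^2 U_2^-=0$ (similarly in $x_3$). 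You instead exploit the discrete symmetry of \eqref{euler-cyl} under $\theta\mapsto-\theta,\ U_2\mapsto-U_2$ (and the analogous $x_3$ reflection) together with the observation that \eqref{super3} is exactly the matching condition for the odd/even periodization of the data to be $C^{2,\alpha}$ on the torus; then the compatibility conditions \eqref{super5} fall out of the reflection symmetry inherited by the solution through Cauchy-problem uniqueness, with no differentiation of the PDE required. Both arguments are valid. Your reflection argument is arguably cleaner and handles existence, uniqueness, the boundary conditions, and \eqref{super5} in one sweep; the price is that it is tied to the flat (box) geometry of $E$, whereas the paper's differentiation/propagation argument is insensitive to straightening and is the one that survives under more general wall geometries. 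Two technical caveats worth fixing in your write-up: (i) the $C^{2,\alpha}$ a priori estimate should be obtained directly in Hölder norms along characteristics (as in \cite{bs07}), not by a high-Sobolev energy estimate plus embedding, since the extended data is only $C^{2,\alpha}$ and you have no higher Sobolev regularity to start with; (ii) the claim that the boundary matrices $A_2$, $A_3$ ``vanish'' on $\{U_2=0\}$, resp.\ $\{U_3=0\}$, is not literally true (the $\p_\theta P$ and $\p_\theta(\rho U_2)$ entries survive) — what is true and suffices is that the symmetrized boundary quadratic form vanishes when evaluated on the difference of two solutions both satisfying the slip condition, which is the standard conservative/characteristic-boundary structure of a solid wall for Euler.
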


The compatibility conditions in \eqref{super5} in Lemma \ref{supersonic} will be verified in the Appendix. Therefore, our problem is reduced to solve a free boundary value problem for the steady Euler system in which the shock front and the downstream subsonic flows are unknown. Then the main result in this paper is stated as follows.
\begin{theorem}\label{existence}
{\it Assume that the compatibility conditions \eqref{pressure-cp} and  \eqref{super3} hold. There exists a suitable constant $\epsilon_0>0$ depending only on the background solution $\overline{\bm{\Psi}}$ and the boundary data $U_{1,0}^-,U_{2,0}^-,U_{3,0}^-, P_0^-$, $K_0^-$, $P_{ex}$ such that if $0< \epsilon<\epsilon_0$, the problem \eqref{euler-cyl} with \eqref{super1}-\eqref{pressure}, and \eqref{rh} has a unique solution $\bm{\Psi}^+=(U_1^+,U_2^+,U_3^+,P^+,K^+)(r,\theta,x_3)$ with the shock front $\mathcal{S}: r=\xi(\theta,x_3)$ satisfying the following properties.
\begin{enumerate}[(1)]
  \item The function $\xi(\theta,x_3)\in C^{3,\alpha}(\overline{E})$ satisfies
  \be\no
  \|\xi(\theta, x_3)-r_s\|_{C^{3,\alpha}(\overline{E})}\leq C_*\epsilon,
  \ee
  and
  \be\no\begin{cases}
  \p_{\theta}\xi(\pm\theta_0, x_3)=\p_{\theta}^3\xi(\pm\theta_0, x_3)=0,\ \ \forall x_3\in [-1,1],\\
  \p_{x_3}\xi(\theta,\pm 1)=\p_{x_3}^3\xi(\theta, \pm 1)=0,\ \ \forall \theta\in [-\theta_0,\theta_0],
  \end{cases}\ee
  where $C_*$ is a positive constant depending only on the background solution and the supersonic incoming flow and the exit pressure.
  \item The solution $\bm{\Psi}^+=(U_1^+,U_2^+,U_3^+,P^+,K^+)(r,\theta,x_3)\in C^{2,\alpha}(\overline{\Omega^+})$ satisfies the entropy condition
  \begin{equation}\no
  K^+(\xi(\theta,x_3)+,\theta,x_3) >K^-(\xi(\theta,x_3)-, \theta,x_3)\quad \forall (\theta,x_3)\in E
  \end{equation}
   and the estimate
  \be\no
  \|\bm{\Psi}^+ -\overline{{\bm{\Psi}}}^+\|_{C^{2,\alpha}(\overline{\Omega^+})}\leq C_*\epsilon
  \ee
  with the compatibility conditions
  \be\no
  \begin{cases}
  (U_{2}^+,\p_{\theta}^2U_{2}^+,\p_{\theta}(U_{1}^+,U_{3}^+, P^+,K^+))(r,\pm\theta_0, x_3)=0,\forall (r,x_3)\in [\xi,r_2]\times [-1,1],\\
  (U_{3}^+,\p_{x_3}^2U_{3}^+,\p_{x_3}(U_{1}^+,U_{2}^+, P^+,K^+))(r,\theta, \pm 1)=0,\forall (r,\theta)\in [\xi,r_2]\times [-\theta_0,\theta_0].
  \end{cases}\ee
\end{enumerate}
}\end{theorem}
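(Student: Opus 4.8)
Since Lemma \ref{supersonic} already supplies the upstream supersonic flow $\bm\Psi^-$ throughout the nozzle, it remains to solve the free boundary problem in $\Omega^+$ for the subsonic state $\bm\Psi^+$ and the shock front $r=\xi(\theta,x_3)$, with $\bm\Psi^-$ entering only as given data. The first step is to reformulate the downstream Euler system via the deformation-curl decomposition. Along subsonic streamlines the entropy $K^+$ and the Bernoulli function $B^+$ are transported, so $(\bm u^+\cdot\nabla)K^+=(\bm u^+\cdot\nabla)B^+=0$; by the Crocco type relation $\mathrm{curl}\,\bm u^+$ is expressed through $\nabla K^+$, $\nabla B^+$ and the flow itself and satisfies a transport system along streamlines as well. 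Writing $\bm u^+=\nabla\phi^++\bm W^+$, where $\bm W^+$ is an explicitly constructed vector field absorbing the vorticity (the homogenized curl system), and recovering $\rho^+=\rho^+(|\bm u^+|^2,B^+,K^+)$ from the Bernoulli law, the mass equation $\mathrm{div}(\rho^+\bm u^+)=0$ becomes a second order equation for $\phi^+$ that is uniformly elliptic near the subsonic background. Thus the hyperbolic modes $(K^+,B^+,\mathrm{curl}\,\bm u^+)$ and the elliptic mode $\phi^+$ decouple up to lower order terms.

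\textbf{Decomposition of the Rankine--Hugoniot conditions.} Using $[\bar\rho\bar U]=[\bar\rho\bar U^2+\bar P]=[\bar B]=0$ together with the identity $\frac{d}{dr}[\bar\rho\bar U^2+\bar P]\big|_{r=r_s}=r_s^{-1}m(\bar U^--\bar U^+)>0$ (valid for every transonic background, which is why no extra structural hypothesis is needed), the second relation in \eqref{rh} can be inverted to give $\xi(\theta,x_3)=r_s+\mathcal A[\bm\Psi^+|_{r=\xi},\,\bm\Psi^-|_{r=\xi}]$, an algebraic equation that pins down the shock front from the traces of the flow. The fifth relation $[B]=0$ fixes the trace of $B^+$, and the remaining relations together with the entropy inequality determine the traces of $K^+$ and of the normal velocity at the shock, providing the inflow data for the transport equations. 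The first relation furnishes, after substituting $\bm u^+=\nabla\phi^++\bm W^+$, the principal datum for $\partial_r\phi^+$ on the shock; but since $\xi$ depends on the downstream traces through $\mathcal A$, while the tangential momentum relations (the third and fourth in \eqref{rh}) read at leading order as $[\bar P]\,\partial_\theta\xi=c_1(U_2^+-U_2^-)+(\text{h.o.t.})$ and $[\bar P]\,\partial_{x_3}\xi=c_2(U_3^+-U_3^-)+(\text{h.o.t.})$ with $c_1,c_2\neq0$, eliminating $\xi$ produces an \emph{unusual second order differential} boundary condition on the shock. The resolution is to keep $\psi:=\xi-r_s$ as an auxiliary unknown on $E$: taking $\mathrm{div}_{(\theta,x_3)}$ of the two tangential relations yields a Poisson equation $\Delta_{(\theta,x_3)}\psi=\mathrm{div}_{(\theta,x_3)}(\cdots)$ on $E$ with homogeneous Neumann condition on $\partial E$ (coming from \eqref{slip1}, \eqref{pressure-cp} and the even/odd symmetry of the data); solving it and inserting the result back into the mass relation converts the awkward condition into a well-posed \emph{oblique derivative} boundary condition for $\phi^+$ on $r=\xi$. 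On the cylinder walls \eqref{slip1} gives homogeneous oblique/Neumann conditions for $\phi^+$, and at the exit \eqref{pressure} becomes, via the Bernoulli law, a prescribed condition on $\phi^+$ at $r=r_2$.

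\textbf{Closing the problem.} Flattening $\Omega^+$ to the fixed cylinder $(r_s,r_2)\times E$ by $r\mapsto r_s+(r_2-r_s)\frac{r-\xi}{r_2-\xi}$, I set up an iteration: from an approximate $(\psi,\phi^+,B^+,K^+,\mathrm{curl}\,\bm u^+)$ in an $O(\epsilon)$ Hölder ball around the background, solve the transport equations for $(K^+,B^+,\mathrm{curl}\,\bm u^+)$ with the shock data from the previous step, then the linear oblique derivative problem for $\phi^+$ in the cylinder with edges (Schauder theory; the compatibility conditions \eqref{pressure-cp}, \eqref{super3} are exactly those making the edge data consistent so that no regularity is lost), then recover $\bm u^+,\rho^+,P^+$, and finally update $\psi$ and $\xi$ from $\mathcal A$ and from the Poisson equation. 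For $\epsilon$ small this map is a contraction in a suitable weighted Hölder space; its unique fixed point solves \eqref{euler-cyl}--\eqref{rh}, and uniqueness for the nonlinear problem follows from the same contraction estimate. The regularity is optimal because the transport equations preserve $C^{2,\alpha}$, elliptic regularity gives $\phi^+\in C^{3,\alpha}$ and hence $\bm u^+,B^+,K^+,P^+\in C^{2,\alpha}$, while the relation determining $\nabla_{(\theta,x_3)}\psi$ from the $C^{2,\alpha}$ velocity traces upgrades $\xi$ to $C^{3,\alpha}$; the boundary relations for $\xi$ and $\bm\Psi^+$ in (1)--(2) follow from the compatibility conditions and the reflection symmetries of the construction.

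\textbf{Main difficulty.} The decisive obstacle is the decomposition of the Rankine--Hugoniot conditions: recognizing that \eqref{rh} splits into one \emph{algebraic} equation pinning $\xi$, transport data for $B^+,K^+,\mathrm{curl}\,\bm u^+$, and a single \emph{oblique} elliptic boundary condition for $\phi^+$ available only after uncovering and solving the hidden Poisson--Neumann problem for $\psi$ on the cross-section $E$. One must verify that this Poisson problem is solvable (its Neumann data are compatible on $\partial E$) and that the resulting condition for $\phi^+$ is genuinely oblique with the sign of the normal coefficient uniform in $\epsilon$. Intertwined with this is the edge regularity at $\{r=\xi\}\cap\Gamma_{2,\pm}$ and $\{r=\xi\}\cap\Gamma_{3,\pm}$, which is precisely what makes the compatibility conditions \eqref{pressure-cp} and \eqref{super3} indispensable for the optimal $C^{2,\alpha}/C^{3,\alpha}$ regularity.
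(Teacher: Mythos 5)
Your proposal tracks the paper's strategy closely: the deformation--curl split (transport for $B^+,K^+,\omega_1$, an elliptic system for the velocity), the decomposition of the Rankine--Hugoniot conditions into an algebraic equation for the shock front plus transport data plus a Poisson problem on the cross-section $E$ yielding an oblique condition, the coordinate flattening, and a contraction. The one structural variation --- posing the Poisson equation on $E$ for $\psi=\xi-r_s$ itself, rather than for the combination $\partial_{y_1}\phi(r_s,\cdot)-a_4\phi(r_s,\cdot)$ as in Lemma~\ref{oblique} --- is cosmetic, since $\psi$ and $\partial_{y_1}\phi(r_s,\cdot)$ are linearly linked through \eqref{shock400} once the potential $\phi$ is introduced.

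There is, however, a genuine gap. The ansatz $\bm u^+=\nabla\phi^++\bm W^+$ with ``$\bm W^+$ an explicitly constructed vector field absorbing the vorticity'' presumes that the linearized curl equations \eqref{vor504}--\eqref{vor506} admit a particular solution; but their right-hand side $(G_1,G_2,G_3)$ is \emph{not} divergence-free after linearization, so the div--curl problem defining $\bm W^+$ is not solvable as stated, and the iterate cannot even be formed. The paper circumvents this by enlarging the system with an auxiliary scalar $\Pi$ with homogeneous Dirichlet data (\eqref{den32}), first solving a Dirichlet Poisson problem for $\Pi$ to correct the source (\eqref{den33}), then the corrected div--curl system (\eqref{den34}), and finally proving $\Pi\equiv0$ at the fixed point; some device of this kind is indispensable and is missing from your argument. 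A secondary imprecision: the algebraic equation for $\xi$ does not follow from the normal momentum relation alone --- the mass, normal momentum and Bernoulli relations form a coupled $3\times3$ system \eqref{rh2} in $(W_1,\dot\rho,W_4)$ at the shock, and only after solving that system and invoking $a_1>0$ does one obtain \eqref{shock400}. Your identity $\frac{d}{dr}[\bar\rho\bar U^2+\bar P]\big|_{r_s}=r_s^{-1}m(\bar U^--\bar U^+)>0$ is correct and points in the right direction, but it is not by itself the mechanism.
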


\begin{remark}
{\it As far as we know, priori to our paper, there are only three papers (\cite{chen08,cy08,lxy16}) on the stability of the transonic shock for the 3D steady Euler system under general three dimensional perturbations of the incoming flows and exit pressure. However, the works in \cite{chen08,cy08} assumed a priorily the shock passing through a fixed point which is not physical and may lead to ill-posedness, while \cite{lxy16} required that the background transonic shock solutions satisfy the ``Structure Condition" which seems extremely difficult to verify. In Theorem \ref{existence}, we remove the above two stringent assumptions, the strength and position of the shock front is completely free and uniquely determined by the incoming flows and the exit pressure. Furthermore, there are no any restrictions on the background shock solutions and the opening angle $\theta_0$ of the cylinder.
}
\end{remark}

\begin{remark}\label{r2}
{\it The results in \cite{chen08,cy08,lxy16} are based on the characteristic decomposition of the steady Euler equations and the crucial fact that the pressure satisfies a second order elliptic equation in subsonic regions. One of main ingredients in our analysis here is quite different from those in \cite{chen08,cy08,lxy16}. We use the deformation-curl decomposition developed by us earlier in \cite{wx19,w19} to decouple the hyperbolic and elliptic modes and transform the steady Euler system to three transport equations for the Bernoulli's quantity, the entropy and the first component of the vorticity and a deformation-curl system for the velocity field.
}\end{remark}

\begin{remark}\label{r21}
{\it In applications of the deformation-curl decomposition to the transonic shock problem, there are several major obstacles that must be overcome: the mechanism of determining the shock front uniquely; the decomposition of the Rankine-Hugoniot condition that matches the deformation-curl decomposition well so that admissible boundary conditions for the hyperbolic and elliptic modes can be identified, respectively; and the regularity near the boundary caused by the intersection of the shock front and the cylinder wall and its propagation along the streamlines. One of the key elements in this paper is a new and elaborate decomposition of the Rankine-Hugoniot conditions, which determines the shock front uniquely by an algebraic equation, provides the boundary data on the shock front for the Bernoulli's quantitiy, the entropy and the first component of the vorticity which represent the hyperbolic modes, and an unusual second order differential boundary condition on the shock front to the velocity field which represents the elliptic modes. See \eqref{rh00}-\eqref{rh0} for detailed explanations.
}\end{remark}

\begin{remark}\label{r22}
{\it The compatibility conditions \eqref{pressure-cp} and \eqref{super3} are prescribed so that the regularity near the cylinder wall can be improved as $C^{2,\alpha}$. This regularity improvement is nontrivial since \eqref{super3} are prescribed only at the entrance. We must show that they are preserved not only by the transportation along the streamlines but also when the fluid moves across the shock front. We emphasize that the deformation-curl decomposition and the decomposition of the Rankine-Hugoniot conditions developed here are universal even for generic perturbations of supersonic incoming, the exit pressure and the nozzle geometry. The remaining unsolved difficulty for generic perturbations is the loss of uniqueness for the streamlines to a class of velocity fields with low regularity (only $C^{\alpha} (0<\alpha<1)$ regularity as expected).}
\end{remark}

\begin{remark}\label{r3}
{\it It should be noted that for the transonic shock flows in Theorem \ref{existence}, the velocity, the entropy and the pressure in the subsonic region have the same $C^{2,\alpha}(\overline{\Omega^+})$ regularity, which is in contrast to \cite{chen08,cy08,lxy16}, where the pressure has one order higher regularity than the velocity, thus they require higher regularity, $C^{3,\alpha}$, of the boundary datum in \cite{chen08,cy08,lxy16}. This improvement of regularity is one of the crucial advantages of the deformation-curl decomposition and should play more important role in the studies of the stability under generic perturbations of the nozzle geometry.
}\end{remark}

\begin{remark}\label{r5}
{\it Our method can be applied to establish the existence and stability of the spherical transonic shock without the ``Structure Condition" required in \cite{lxy16}, this will be reported in \cite{wx22b}.
}\end{remark}

We make some detailed explanations on the new ingredients in our analysis for the transonic shock problem. Note that the transonic shock problem can be formulated as a free boundary value problem to the steady Euler system which is hyperbolic-elliptic mixed in subsonic regions, whose effective decomposition into elliptic and hyperbolic modes is crucial for the solvability of the nonlinear free boundary problem. There are several different decompositions to the three dimensional steady Euler system in subsonic regions \cite{cx14,chen08,cy08,lxy16,w15,xy08b} developed by many researchers from different point of views. As pointed in above remarks, here we use the deformation-curl decomposition introduced in \cite{wx19,w19} to effectively decouple the hyperbolic and elliptic modes in subsonic region. The basic idea for this decomposition is as follows. It is well-known that the Bernoulli's quantity and the entropy are conserved along the streamlines. One can use the momentum equations to represent the second and third components of the vorticity as two algebraic equations for the Bernoulli's quantity, the entropy and the first component of the vorticity. Together with the divergence free condition for the vorticity, one could derive a transport equation for the first component of the vorticity. Furthermore, the continuity equation can be rewritten as a Frobenius inner product of a symmetric matrix and the deformation matrix by employing the Bernoulli's law and representing the density as a function of the Bernoulli's quantity, the entropy and the velocity field. This together with the vorticity equations constitute a deformation-curl system for the velocity field which is elliptic in subsonic regions.

Besides the deformation-curl decomposition, we also need an elegant decomposition of the Rankine-Hugoniot conditions to determine the shock front uniquely and identify suitable boundary conditions for the hyperbolic and elliptic modes. Indeed, the Rankine-Hugoniot conditions in \eqref{rh} can be reformulated as follows:
\be\label{rh00}\begin{cases}
\xi(\theta,x_3)-r_s=\frac{1}{a_1}(U_1^+(\xi(\theta,x_3),\theta,x_3)-\bar{U}^+(\xi(\theta,x_3)))-\frac{R_1}{a_1},\\
B^+(\xi(\theta,x_3),\theta,x_3)=B^-(\xi(\theta,x_3),\theta,x_3),\\
K^+(\xi(\theta,x_3),\theta,x_3)-\bar{K}^+= a_2 (\xi(\theta,x_3)-r_s) +R_2
\end{cases}\ee
and
\be\label{rh0}\begin{cases}
F_2(\theta,x_3):=\p_{\theta}\xi- a_0 \xi U_2^+(\xi(\theta,x_3),\theta,x_3) -\xi(\theta,x_3)g_2(\theta,x_3)=0,\ \ \text{in }E,\\
F_3(\theta,x_3):=\p_{x_3}\xi- a_0 U_3^+(\xi(\theta,x_3),\theta,x_3) -g_3(\theta,x_3)=0,\ \ \text{in }E.
\end{cases}\ee
where $a_0,a_1,a_2$ are constants depending on the background solutions, and $R_1, R_2,g_2,g_3$ are error terms (see Section \S\ref{22}). The shock front will be determined by the first equation in \eqref{rh00} in which the principal term is the difference between the radial velocity and the background radial velocity. The other two equations in \eqref{rh00} will be used as the boundary conditions for the Bernoulli's quantity and the entropy. It is important to realize that the system \eqref{rh0} is equivalent to the following divergence-curl system with normal boundary conditions:
\be\label{rh01}\begin{cases}
\frac{1}{r_s}\p_{\theta }F_3-\p_{x_3} F_2=0,\ \ &\text{in }E,\\
\frac{1}{r_s}\p_{\theta} F_2 + \p_{x_3} F_3=0, \ \ &\text{in }E,\\
n_2 F_2+ n_3 F_3=0,\ \ &\text{on }\partial E,
\end{cases}\ee
where $(n_2, n_3)$ is the unit outer normal to $\partial E$. Then the first equation in \eqref{rh01} yields the boundary data on the shock front for the first component of the vorticity. The second equation in \eqref{rh01} leads to an unusual second order differential boundary condition on the shock front for the deformation-curl first order elliptic system with nonlocal terms involving the trace of the radial velocity on the shock front (See \eqref{shock19}).

Note that after linearization, the source term in the curl system (i.e. \eqref{vor504}-\eqref{vor506}) is not divergence free, thus the solvability condition for the curl system does not hold in general. Instead we consider an enlarged deformation-curl system which includes a new unknown function $\Pi$ with the homogeneous Dirichlet boundary conditions. The Duhamel's principle is then used to solve this enlarged deformation-curl system. First we determine the function $\Pi$ by solving the Poisson equation with the homogeneous Dirichlet boundary conditions. Then we solve the standard div-curl system with the homogeneous normal boundary conditions. After homogenizing the curl system and introducing a potential function, we find that on the shock front the unusual second order differential boundary condition becomes the Poisson equation with homogeneous Neumann boundary conditions (i.e. the third equation in \eqref{rh01}) on the intersection of the shock front and the cylinder walls from which an oblique boundary condition for the potential function can be derived uniquely. The boundary value problem of the deformation-curl system for the velocity field is reduced to an oblique boundary value problem to a second order elliptic equation for the potential function with a nonlocal term involving only the trace of the potential function on the shock front. Another interesting issue that deserves further attentions is when using the deformation-curl decomposition to deal with the transonic shock problem, the exit pressure boundary condition becomes nonlocal since it involves the information from the shock front (see \eqref{pres4}). However, this nonlocal boundary condition reduces to be local after introducing a potential function (see \eqref{den37}).

This paper will be organized as follows. Section \ref{reformulation} is devoted to the decomposition of the hyperbolic and elliptic modes for the steady Euler equations in subsonic regions in terms of the deformation and curl, and the corresponding reformulation of the Rankine-Hugoniot jump conditions. We also introduce a coordinate transformation such that the free boundary becomes fixed. In Section \ref{proof}, we design an iteration scheme and solve the deformation-curl system with nonlocal terms and the unusual second order differential boundary condition on the shock front. In the Appendix, we verify the compatibility conditions of the supersonic flows and also the compatibility conditions on the intersection of the shock front and the cylinder wall.

\section{The reformulation of the transonic shock problem}\label{reformulation}

\subsection{The deformation-curl decomposition of the steady Euler equations}

The steady Euler system is hyperbolic-elliptic mixed in subsonic regions. Thus the solvability and formulation of suitable boundary conditions of even fixed boundary value problems for such mixed system is extremely subtle and difficult. Some of the key difficulties lie in identifying suitable hyperbolic and elliptic modes with proper boundary conditions. To overcome these difficulties, we utilize the deformation-curl decomposition for the steady Euler system introduced by the authors\cite{wx19,w19} to decouple the hyperbolic and elliptic modes. Let us give the details of the deformation-curl decomposition to the steady Euler system in cylindrical coordinates.

First, one can identify the hyperbolic modes in the system in \eqref{euler-cyl}. The Bernoulli's quantity and the entropy are transported by the following equations
\be\lab{ber10}
(\p_r+\frac{U_2}{U_1}\frac{1}{r}\p_{\theta} +\frac{U_3}{U_1}\p_{x_3}) B=0,\\\label{ent10}
(\p_r+\frac{U_2}{U_1}\frac{1}{r}\p_{\theta} +\frac{U_3}{U_1}\p_{x_3}) K=0.
\ee

Rewrite the vorticity $\bm{\omega}(r,\theta,x_3)=\text{curl }{\bf u}=\omega_1 {\bf e}_r + \omega_2 {\bf e}_{\theta}+ \omega_3 {\bf e}_3$ with
\be\no
\om_1= \frac{1}{r}\p_{\theta} U_3- \p_{x_3} U_2 ,\q \om_2=\p_{x_3} U_1- \p_r U_3,\q \om_3= \p_r U_2- \frac{1}{r}\p_\theta U_1 + \frac{U_2}{r}.
\ee
It then follows from the third and fourth equations in \eqref{euler-cyl} that
\be\no\begin{cases}
U_1 \om_3 - U_3 \om_1 + \frac{1}{r}\p_{\theta} B-\frac{B-\frac{1}{2}|{\bf U}|^2} {\gamma K(S)}\frac{1}{r}\p_{\theta} K(S)=0,\\
-U_1 \om_2 + U_2 \om_1 +\p_{x_3} B-\frac{B-\frac{1}{2}|{\bf U}|^2} {\gamma K(S) U_1}\p_{x_3} K(S)=0.
\end{cases}\ee
Thus one gets
\be\lab{vor13}\begin{cases}
\om_2 =\frac{U_2\om_1 +\p_{x_3} B}{U_1} - \frac{B-\frac{1}{2}|{\bf U}|^2} {\gamma K(S) U_1}\p_{x_3} K(S),\\
\om_3 =\frac{U_3\om_1 -\frac{1}{r}\p_{\theta} B}{U_1} + \frac{B-\frac{1}{2}|{\bf U}|^2} {\gamma K(S) U_1}\frac{1}{r}\p_{\theta} K(S).
\end{cases}\ee

Since
\be\no
\text{div }\text{curl }{\bf u}=\p_r\om_1+\frac{1}{r}\p_{\theta}\om_2+\p_{x_3}\om_3 +\frac{\omega_1}{r}=0,
\ee
substituting \eqref{vor13} into the above equation yields
\be\label{vor14}
&&(\p_r+\frac{U_2}{U_1}\frac{1}{r}\p_{\theta} +\frac{U_3}{U_1}\p_{x_3})\om_1 +(\f{1}{r} +\frac{1}{r}\p_{\theta}(\f{U_2}{U_1})+\p_{x_3}(\frac{U_3}{U_1}))\om_1+ \frac{1}{r}\p_{\theta}(\frac1{U_1}) \p_{x_3}B\\\no
&&-\p_{x_3}(\f{1}{U_1}) \frac{1}{r}\p_{\theta} B-\f{1}{r} \p_{\theta}(\frac{B-\frac{1}{2}|{\bf U}|^2}{\gamma K(S)U_1}) \p_{x_3} K(S) + \p_{x_3} (\frac{B-\frac{1}{2}|{\bf U}|^2}{\gamma K(S)U_1}) \frac{1}{r}\p_{\theta} K(S)=0.
\ee

Next, we study the elliptic modes in the steady Euler system \eqref{euler-cyl}. Using the Bernoulli's quantity $B=\f12 |{\bf U}|^2 + h(\rho,K)$ with $h(\rho,K)=e+\frac{P}{\rho}$ being the enthalpy, one can represent the density as a function of $B, K$, and $|{\bf U}|^2$:
\be\lab{den}
\rho= \rho(B,K, |{\bf U}|^2)=\left(\frac{\ga-1}{\ga K}\right)^{\frac{1}{\ga-1}}\left(B-\frac12 |{\bf U}|^2\right)^{\frac{1}{\ga-1}}.
\ee
Substituting \eqref{den} into the continuity equation and using \eqref{ent10} and \eqref{ber10} lead to
\be\no
&&(c^2(B,|{\bf U}|^2)-U_1^2)\p_r U_1 + (c^2(B,|{\bf U}|^2)-U_2^2)\frac{1}{r}\p_{\theta} U_2+ (c^2(B,|{\bf U}|^2)-U_3^2)\p_{x_3} U_3\\\no
&&\quad
+ \frac{c^2(B,|{\bf U}|^2) U_1}{r}=U_1(U_2\p_r U_2+ U_3 \p_r U_3)+ U_2(U_1\frac{1}{r}\p_{\theta} U_1+U_3\frac{1}{r}\p_{\theta} U_3)\\\lab{den11}
&&\quad+ U_3 (U_1\p_{x_3} U_1+U_2 \p_{x_3} U_2),
\ee
which can be rewritten as a Frobenius inner product of a symmetric matrix and the deformation matrix.

The equation \eqref{den11} together with the vorticity equations constitute a deformation-curl system for the velocity field:
\be\label{dc}\begin{cases}
(c^2-U_1^2)\p_r U_1 + (c^2-U_2^2)\frac{1}{r}\p_{\theta} U_2+ (c^2-U_3^2)\p_{x_3} U_3 + \frac{c^2 U_1}{r}=U_1(U_2\p_r U_2+ U_3 \p_r U_3)\\
\q+ U_2(U_1\frac{1}{r}\p_{\theta} U_1+U_3\frac{1}{r}\p_{\theta} U_3)+ U_3 (U_1\p_{x_3} U_1+U_2 \p_{x_3} U_2),\\
\frac{1}{r}\p_{\theta} U_3- \p_{x_3} U_2=\om_1,\\
\p_{x_3} U_1- \p_r U_3=\om_2,\\
\p_r U_2- \frac{1}{r}\p_\theta U_1 + \frac{U_2}{r}=\om_3.
\end{cases}\ee
The system \eqref{dc} is equivalent to an enlarged system including a new unknown function (see \eqref{den32} in Section \S\ref{proof}) and this enlarged system in subsonic region is elliptic in the sense of Agmon-Dougalis-Nirenberg \cite{adn64}. One may refer to \cite[Section 4]{w19} of a detailed verification for the ellipticity of an enlarged div-curl system in the sense of Agmon-Dougalis-Nirenberg.

\begin{lemma}\label{equiv}({\bf Equivalence.})
{\it Assume that $C^1$ smooth vector functions $(\rho, {\bf U}, K)$ defined on a domain $\Omega$ do not contain the vacuum (i.e. $\rho(r,\theta,x_3)>0$ in $\Omega$) and the radial velocity $U_1(r,\theta,x_3)>0$ in $\Omega$, then the following two statements are equivalent:
\begin{enumerate}[(1).]
  \item $(\rho, {\bf U}, K)$ satisfy the steady Euler system \eqref{euler-cyl} in $\Omega$;
  \item $({\bf U}, K, B)$ satisfy the equation \eqref{ber10}, \eqref{ent10}, \eqref{vor13} and \eqref{dc}.
\end{enumerate}
}\end{lemma}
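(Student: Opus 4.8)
The plan is to prove the two implications separately, using Bernoulli's law \eqref{den} as the dictionary between the two formulations: for fixed $K>0$ the enthalpy $h(\rho,K)$ is strictly increasing in $\rho$, so the map $\rho\mapsto B=\frac12|{\bf U}|^2+h(\rho,K)$ is a bijection and $\rho$ is recovered from $(B,K,|{\bf U}|^2)$ precisely by \eqref{den}; thus the two statements really involve the same number of unknowns with $\rho$ (equivalently $P$) traded for $B$. Throughout I would keep at hand the polytropic identities $\nabla h=\frac1\rho\nabla P+\frac{h}{\gamma K}\nabla K$ (with $h=B-\frac12|{\bf U}|^2$), $c^2=(\gamma-1)h$, and $\frac{\partial \rho}{\partial |{\bf U}|^2}(B,K,|{\bf U}|^2)=-\frac{\rho}{2c^2}$; the hypotheses $\rho>0$ and $U_1>0$ make every division below legitimate.

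\smallskip\noindent\textbf{Direction (1)$\Rightarrow$(2).}
Equation \eqref{ent10} is the fifth equation of \eqref{euler-cyl} divided by $U_1$. For \eqref{ber10}, I would dot the three momentum equations of \eqref{euler-cyl} with ${\bf u}=(U_1,U_2,U_3)$; the geometric source terms cancel since $U_1\big(-\frac{U_2^2}{r}\big)+U_2\big(\frac{U_1U_2}{r}\big)=0$, leaving ${\bf u}\cdot\nabla(\frac12|{\bf U}|^2)+\frac1\rho\,{\bf u}\cdot\nabla P=0$, and adding ${\bf u}\cdot\nabla h=\frac1\rho\,{\bf u}\cdot\nabla P+\frac{h}{\gamma K}\,{\bf u}\cdot\nabla K$ together with \eqref{ent10} yields ${\bf u}\cdot\nabla B=0$, i.e.\ \eqref{ber10}. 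The relations \eqref{vor13} are the ${\bf e}_\theta$- and ${\bf e}_3$-components of the Crocco form $\nabla B-{\bf u}\times\bm{\omega}-\frac{h}{\gamma K}\nabla K=0$ of the momentum equations, solved for $\omega_3$ and $\omega_2$ — exactly the computation displayed just before \eqref{vor13}. Finally the last three equations of \eqref{dc} are the definition of $\bm{\omega}=\text{curl}\,{\bf u}$, and the first is \eqref{den11}, obtained by substituting \eqref{den} into $\text{div}(\rho{\bf u})=0$, writing ${\bf u}\cdot\nabla\rho=\rho_B\,{\bf u}\cdot\nabla B+\rho_K\,{\bf u}\cdot\nabla K+\rho_{|{\bf U}|^2}\,{\bf u}\cdot\nabla|{\bf U}|^2$, and discarding the first two terms by \eqref{ber10} and \eqref{ent10}.

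\smallskip\noindent\textbf{Direction (2)$\Rightarrow$(1).}
Given $({\bf U},K,B)$, define $\rho$ by \eqref{den} (so $h:=B-\frac12|{\bf U}|^2$ is the enthalpy of $(\rho,K)$) and $\bm{\omega}:=\text{curl}\,{\bf u}$. The fifth Euler equation is $U_1$ times \eqref{ent10}. For the momentum equations set ${\bf R}:=\nabla B-{\bf u}\times\bm{\omega}-\frac{h}{\gamma K}\nabla K$; by the identity $({\bf u}\cdot\nabla){\bf u}=\nabla(\frac12|{\bf U}|^2)-{\bf u}\times\bm{\omega}$ and $\nabla h=\frac1\rho\nabla P+\frac{h}{\gamma K}\nabla K$, the vanishing of ${\bf R}$ is equivalent to the three cylindrical momentum equations (the geometric terms being exactly those produced when $({\bf u}\cdot\nabla){\bf u}$ is expressed in the frame $\{{\bf e}_r,{\bf e}_\theta,{\bf e}_3\}$). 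Now \eqref{vor13} asserts precisely that the ${\bf e}_\theta$- and ${\bf e}_3$-components of ${\bf R}$ vanish, while \eqref{ber10} and \eqref{ent10} give ${\bf u}\cdot{\bf R}=U_1\big(\partial_r+\frac{U_2}{U_1}\frac1r\partial_\theta+\frac{U_3}{U_1}\partial_{x_3}\big)B-\frac{h}{\gamma K}U_1\big(\partial_r+\frac{U_2}{U_1}\frac1r\partial_\theta+\frac{U_3}{U_1}\partial_{x_3}\big)K=0$; since ${\bf u}\cdot{\bf R}=U_1R_1$ (its ${\bf e}_r$-component) and $U_1>0$, also $R_1=0$, hence ${\bf R}\equiv0$ and all three momentum equations hold. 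Lastly, reversing the derivation of \eqref{den11}: the first equation of \eqref{dc}, combined with $\rho_{|{\bf U}|^2}=-\frac{\rho}{2c^2}$ and \eqref{ber10}, \eqref{ent10}, reassembles into ${\bf u}\cdot\nabla\rho+\rho\,\text{div}\,{\bf u}=0$, which is the continuity equation.

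\smallskip\noindent\textbf{Main obstacle.}
I expect the delicate point to be recovering the ${\bf e}_r$-component of the momentum equations in the second implication: it is not among the prescribed equations and must be produced from the scalar transport equations \eqref{ber10}, \eqref{ent10} together with the two Crocco components \eqref{vor13}. This is exactly where $U_1>0$ is indispensable — without it one cannot pass from ${\bf u}\cdot{\bf R}=0$ to ${\bf R}=0$. The remaining work is bookkeeping: checking that each algebraic manipulation (the explicit form of $\rho$, the identities $c^2=(\gamma-1)h$ and $\rho_{|{\bf U}|^2}=-\rho/(2c^2)$, the cancellation of the geometric source terms) is genuinely reversible, which is guaranteed by $\rho>0$ and $K>0$.
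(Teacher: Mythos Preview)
Your proposal is correct and follows essentially the same route as the paper: for $(2)\Rightarrow(1)$ you recover the $\theta$- and $x_3$-momentum equations from \eqref{vor13} and the curl identities, then use the transport of $B$ (and $K$) together with $U_1>0$ to deduce the radial momentum equation, and finally reverse the derivation of \eqref{den11} to obtain continuity. Your presentation via the Crocco residual ${\bf R}=\nabla B-{\bf u}\times\bm{\omega}-\frac{h}{\gamma K}\nabla K$ is a clean way to organize exactly the argument the paper sketches; note only that the equality ${\bf u}\cdot{\bf R}=U_1R_1$ holds \emph{because} $R_2=R_3=0$ has already been established, so the parenthetical ``its ${\bf e}_r$-component'' should be read in that light.
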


\begin{proof}
We have proved that Statement (1) implies Statement (2). It remains to prove the converse. Define $\rho$ by \eqref{den}. Then using the last three equations in \eqref{dc} and the equations \eqref{vor13},  one can verify directly that the second and third momentum equations in \eqref{euler-cyl} are satisfied. These together with \eqref{ber10} imply that the first momentum equation in \eqref{euler-cyl} holds. Finally, according to \eqref{den}, the continuity equation in \eqref{euler-cyl} follows directly from the first equation in \eqref{dc}, \eqref{ber10} and \eqref{ent10}.

\end{proof}

\begin{remark}
{\it It is worthy noting the roles played by the equations \eqref{vor13}. On one hand, the equations \eqref{vor13} are used to derive a transport equation for $\omega_1$, \eqref{vor14}, which is hyperbolic. On the other hand, the equations \eqref{vor13} are also used to constitute a deformation-curl system for the velocity, which reveals the ellipticity for subsonic flows. Thus the hyperbolicity and ellipticity for subsonic flows are coupled in \eqref{vor13} and we decouple them in the above way which turns out to be effective and suit our purpose for solving various problems such as smooth transonic spiral flows in \cite{wxy21b} and the transonic shock problem herein. It should be noted that the equation \eqref{vor14} is not independent and is regarded as a byproduct of the divergence free of the vorticity and \eqref{vor13}.
}\end{remark}

\subsection{The reformulation of the Rankine-Hugoniot conditions and boundary conditions}\label{22}

Due to the mixed elliptic-hyperbolic structure of the steady Euler system in the subsonic region, it is important to formulate proper boundary conditions and their compatibility. To this end, we set
\be\no
&&W_1(r,\theta,x_3)= U_1^+(r,\th,x_3)- \bar{U}^+(r),\quad W_j(r,\theta,x_3)=U_j^+(r,\theta,x_3), j=2,3,\\\no
&&W_4(r,\theta,x_3)=K^+(r,\theta,x_3)-\bar{K}^+,\ W_5(r,\theta,x_3)= B^+(r,\theta,x_3)-\bar{B}^+,\ ,\\\no
&&W_6(\theta,x_3)= \xi(\theta,x_3)-r_s, \ \ \ {\bf W}=(W_1,\cdots, W_5).
\ee
Then the density and the pressure can be expressed as
\be\label{denw1}
&&\rho({\bf W})=\b(\frac{\gamma-1}{\gamma (\bar{K}^++W_4)}\b)^{\frac{1}{\gamma-1}}\b(\bar{B}^++W_5-\frac12(\bar{U}^++W_1)^2-\frac12\sum_{i=2}^3 W_i^2\b)^{\frac{1}{\gamma-1}},\\\label{denw2}
&&P({\bf W})=\b(\frac{(\gamma-1)^{\gamma}}{\gamma^{\gamma}(\bar{K}^++W_4)}\b)^{\frac{1}{\gamma-1}}\b(\bar{B}^++W_5-\frac12(\bar{U}^++W_1)^2-\frac12\sum_{i=2}^3 W_i^2\b)^{\frac{\gamma}{\gamma-1}}.
\ee

It follows from the third and fourth equations in \eqref{rh} that
\be\lab{shock11}
\frac{1}{\xi(\theta,x_3)}\p_{\theta} \xi =\frac{J_2(\xi,\theta,x_3)}{J(\xi,\theta,x_3)},\ \,\,\, \p_{x_3}  \xi =\frac{J_3(\xi,\theta,x_3)}{J(\xi,\theta,x_3)},
\ee
where
\be\no
&&J(\xi,\theta,x_3)=   [\rho U_2^2 + P] [\rho U_3^2 +P]- ([\rho U_2 U_3])^2,\\\no
&&J_2(\xi,\theta,x_3)= [\rho U_3^2+P] [\rho U_1 U_2]- [\rho U_1 U_3] [\rho U_2 U_3],\\\no
&&J_3(\xi,\theta,x_3)= [\rho U_2^2+P][\rho U_1 U_3]- [\rho U_1 U_2] [\rho U_2 U_3].
\ee
Rewrite \eqref{shock11} as
\be\lab{shock12}\begin{cases}
\p_{\theta} \xi= a_0 r_s W_2(\xi,\th,x_3) +r_s g_2(\bm{\Psi}^-(r_s+W_6,\theta,x_3) - \overline{\bm{\Psi}}^-(r_s+W_6),{\bf W}(\xi,\theta,x_3), W_6),\\
\p_{x_3} \xi= a_0 W_3(\xi,\th,x_3) +g_3(\bm{\Psi}^-(r_s+W_6,\theta,x_3) - \overline{\bm{\Psi}}^-(r_s+W_6),{\bf W}(\xi,\theta,x_3), W_6),
\end{cases}\ee
where $a_0= \f{(\bar{\rho}^+ \bar{U}^+)(r_s)}{[\bar{P}(r_s)]}>0$ and
\be\no
g_2=\frac{1}{r_s}\left(\frac{\xi J_2}{J}-a_0 r_s W_2(\xi(\th,x_3),\th,x_3)\right),\ \ \ g_3=\frac{J_3}{J}- a_0 W_3(\xi(\th,x_3),\th,x_3).
\ee
The functions $g_i, i=2,3$ are regarded as error terms which can be bounded by
\be\label{g23}
|g_i|\leq C_*(|\bm{\Psi}^-(r_s+W_6,\theta,x_3) - \overline{\bm{\Psi}}^-(r_s+W_6)|+|{\bf W}(\xi,\theta,x_3)|^2+|W_6|^2).
\ee
To see this, we only estimate $g_3$, the estimate of $g_2$ is similar. Indeed,
\be\no
&&g_3
= \frac{[\rho U_2^2+P][\rho U_1 U_3]}{J}- a_0 W_3- \frac{[\rho U_1 U_2] [\rho U_2 U_3]}{J}\\\no
&&=W_3\left\{\frac{\rho^+ U_1^+ }{[ P]}-a_0\right\}-\frac{\rho^+ U_1^+W_3}{[P]}\frac{[\rho U_3^2]}{[\rho U_3^2 +P]}-\frac{\rho^- U_1^- U_3^-}{[\rho U_3^2 +P]}\\\no
&&\quad+\frac{[\rho U_1 U_3]}{[\rho U_3^2 +P]}\frac{([\rho U_2 U_3])^2}{[\rho U_2^2 + P] [\rho U_3^2 +P]- ([\rho U_2 U_3])^2}- \frac{[\rho U_1 U_2] [\rho U_2 U_3]}{J}
\ee
and
\be\no
&&\left(\frac{\rho^+ U_1^+ }{[P]}-a_0\right)W_3=\frac{(\bar{\rho}^+(\xi) +\hat{\rho})(\bar{U}^+(\xi)+W_1)W_3}{\bar{P}^+(\xi)-\bar{P}^-(\xi)+P^+(\xi)-\bar{P}^+(\xi)-(P^-(\xi)-\bar{P}^-(\xi))}-a_0W_3\\\no
&&=W_3\left(\frac{(\bar{\rho}^+\bar{U}^+)(r_s+W_6)}{(\bar{P}^+-\bar{P}^-)(r_s+W_6)}-\frac{(\bar{\rho}^+\bar{U}^+)(r_s)}{(\bar{P}^+-\bar{P}^-)(r_s)}\right)\\\no
&&\quad- W_3\frac{(\bar{\rho}^+\bar{U}^+)(\xi)}{\bar{P}^+(\xi)-\bar{P}^-(\xi)}\frac{P^+(\xi)-\bar{P}^+(\xi)-(P^-(\xi)-\bar{P}^-(\xi))}{\bar{P}^+(\xi)-\bar{P}^-(\xi)+P^+(\xi)-\bar{P}^+(\xi)-(P^-(\xi)-\bar{P}^-(\xi))}
\\\no
&&\quad+ W_3\frac{\bar{\rho}^+(\xi)W_1 + \bar{U}^+(\xi) \hat{\rho} + \hat{\rho} W_1}{\bar{P}^+(\xi)-\bar{P}^-(\xi)+P^+(\xi)-\bar{P}^+(\xi)-(P^-(\xi)-\bar{P}^-(\xi))}.
\ee
Thus the estimate \eqref{g23} for $i=3$ follows easily.

It follows from \eqref{shock11} and \eqref{rh} that
\be\label{rh1}\begin{cases}
[\rho U_1]=\frac{[\rho U_2] J_2+[\rho U_3] J_3}{J},\\
[\rho U_1^2+P]=\frac{[\rho U_1 U_2] J_2+[\rho U_1 U_3] J_3}{J},\\
[B]=0.
\end{cases}\ee
Denote $\dot{\rho}(r,\theta,x_3)= \rho^+(r,\theta,x_3)-\bar{\rho}^+(r)$. Then the first equation in \eqref{rh1} implies that
\be\no
&&-[\bar{\rho} \bar{U}](\xi)+(\rho^- U_1^-)(\xi,\theta,x_3)-(\bar{\rho}^-\bar{U}^-)(\xi)+\frac{[\rho U_2] J_2+[\rho U_3] J_3}{J}\\\no
&&=(\rho^+ U_1^+)(\xi,\theta,x_3)-(\bar{\rho}^+\bar{U}^+)(\xi)\\\no
&&=\bar{\rho}^+(r_s)W_1(\xi,\theta,x_3)+ \bar{U}^+(r_s)\dot{\rho}(\xi,\theta,x_3)+(W_1+\bar{U}^+(\xi)-\bar{U}^+(r_s))\dot{\rho}(\xi,\theta,x_3)\\\no
&&\quad\quad+ (\bar{\rho}^+(\xi)-\bar{\rho}^+(r_s))W_1(\xi,\theta,x_3).
\ee
Thus
\be\no
&&\bar{\rho}^+(r_s) W_1(\xi,\theta,x_3)+{\bar U}^+(r_s) \dot{\rho}(\xi,\theta,x_3)=-[\bar{\rho} \bar{U}](\xi)+ \frac{[\rho U_2] J_2+[\rho U_3] J_3}{J}\\\no
&&+(\rho^- U_1^-)(\xi,\theta,x_3)-(\bar{\rho}^-\bar{U}^-)(\xi)-(W_1+\bar{U}^+(r_s+W_6)-\bar{U}^+(r_s))\dot{\rho}(\xi,\theta,x_3)\\\no
&&\quad-(\bar{\rho}^+(r_s+W_6)-\bar{\rho}^+(r_s))W_1(\xi,\theta,x_3):=R_{01}(\bm{\Psi}^-(\xi,\theta,x_3) - \overline{\bm{\Psi}}^-(\xi),{\bf W}(\xi,\theta,x_3), W_6).
\ee

Similarly, one can conclude from \eqref{rh1} that at $(\xi(\theta,x_3),\theta,x_3)$, it holds that
\be\label{rh2}\begin{cases}
\bar{\rho}^+(r_s) W_1+{\bar U}^+(r_s) \dot{\rho} = R_{01}(\bm{\Psi}^-(\xi,\theta,x_3) - \overline{\bm{\Psi}}^-(\xi),{\bf W}(\xi,\theta,x_3), W_6),\\
2(\bar{\rho}^+ \bar{U}^+)(r_s)W_1+ \{(\bar{U}^+(r_s))^2+c^2(\bar{\rho}^+(r_s),\bar{K}^+)\}\dot{\rho}+(\bar{\rho}^+(r_s))^{\gamma} W_4\\
\q=-\frac{1}{r_s}[\bar{P}(r_s)] W_6+ R_{02}(\bm{\Psi}^-(\xi,\theta,x_3) - \overline{\bm{\Psi}}^-(\xi),{\bf W}(\xi,\theta,x_3), W_6),\\
\bar{U}^+(r_s) W_1+ \frac{c^2(\bar{\rho}^+(r_s),\bar{K}^+)}{\bar{\rho}^+(r_s)} \dot{\rho}+ \frac{\ga (\bar{\rho}^+(r_s))^{\gamma-1}}{(\ga-1)} W_4=R_{03},
\end{cases}\ee
where
\be\no
&&R_{02}=-\left\{[\bar{\rho} \bar{U}^2+ \bar{P}](r_s +W_6)-\frac{1}{r_s}[\bar{P}(r_s)] W_6\right\}+ (\rho^- (U_1^-)^2 +P^-)(\xi,\theta,x_3)\\\no
&&\quad-(\bar{\rho}^-(\bar{U}^-)^2+\bar{P}^-)(\xi)-\bigg\{(\rho^+ (U_1^+)^2 +P^+)(\xi,\theta,x_3)-(\bar{\rho}^+(\bar{U}^+)^2+\bar{P}^+)(\xi)\\\no
&&\quad\quad-2(\bar{\rho}^+ \bar{U}^+)(r_s)W_1- \{(\bar{U}^+(r_s))^2+c^2(\bar{\rho}^+(r_s),\bar{K}^+)\}\dot{\rho}-(\bar{\rho}^+(r_s))^{\gamma} W_4\bigg\}\\\no
&&\quad\quad+\frac{[\rho U_1 U_2] J_2+[\rho U_1 U_3] J_3}{J},\\\no
&&R_{03}= B^-(r_s+ W_6,\theta,x_3)- \bar{B}^-- \bar{U}^+(r_s+W_6) W_1(\xi,\theta,x_3)-\frac{1}{2}\sum_{j=1}^3 W_j^2(\xi,\theta,x_3)\\\no
&&\quad\quad -\frac{\gamma}{\gamma-1}\left((\bar{K}^+ +W_4(\xi,\theta,x_3))(\rho(\xi,\theta,x_3))^{\gamma-1}-\bar{K}^+ (\bar{\rho}^+(\xi))^{\gamma-1}\right)\\\no
&&\quad\quad + \bar{U}^+(r_s) W_1+ \frac{c^2(\bar{\rho}^+(r_s),\bar{K}^+)}{\bar{\rho}^+(r_s)} \dot{\rho}+ \frac{\ga (\bar{\rho}^+(r_s))^{\gamma-1}}{(\ga-1)} W_4.
\ee

Note that
\be\no
\frac{d}{dr}(\bar{\rho}^{\pm} \bar{U}^{\pm})(r)= -\frac{1}{r}(\bar{\rho}^{\pm} \bar{U}^{\pm})(r),\quad\quad
\frac{d}{dr}(\bar{\rho}^{\pm} (\bar{U}^{\pm})^2+ \bar{P}^{\pm})=-\frac{1}{r}\bar{\rho}^{\pm} (\bar{U}^{\pm})^2,
\ee
hence
\be\no
[\bar{\rho} \bar{U}](r_s+W_6)=O(W_6^2),\q [\bar{\rho}\bar{U}^2+\bar{P}](r_s+W_6)-\frac{1}{r_s}[\bar{P}(r_s)] W_6=O(W_6^2).
\ee

Using the equation \eqref{denw1}, $\dot{\rho}$ can be represented as a function of ${\bf W}$ and $W_6$, there exists a constant $C_0>0$ depending only on the background solution, such that
\be\no
|R_{0i}|\leq C_0(|\bm{\Psi}^-(\xi,\theta,x_3)-\overline{\bm{\Psi}}^-(\xi)|+ |{\bf W}(\xi,\theta,x_3)|^2+|W_6|^2), i=1,2,3.
\ee

Then solving the algebraic equations in \eqref{rh2}, one gets
\be\lab{shock13}\begin{cases}
W_1(\xi,\theta,x_3)=a_1 W_6+ R_1(\bm{\Psi}^-(r_s+W_6,\theta,x_3) - \overline{\bm{\Psi}}^-(r_s+W_6),{\bf W}(\xi,\theta,x_3), W_6),\\
W_4(\xi,\theta,x_3)=a_2 W_6+R_2(\bm{\Psi}^-(r_s+W_6,\theta,x_3) - \overline{\bm{\Psi}}^-(r_s+W_6),{\bf W}(\xi,\theta,x_3), W_6),\\
W_5(\xi,\theta,x_3)=B^-(r_s+W_6(\theta,x_3),\theta,x_3)-\bar{B}^-.
\end{cases}\ee
where
\be\no
&&a_1=\f{\gamma \bar{U}^+(r_s)[\bar{P}(r_s)]}{r_s\bar{\rho}^+(r_s) (c^2(\bar{\rho}^+(r_s),\bar{K}^+)-(\bar{U}^+(r_s))^2)}>0,\\\no
&&a_2=\frac{(\gamma-1)[\bar{P}(r_s)]}{r_s(\bar{\rho}^+(r_s))^{\gamma}}>0
\ee
and
\be\no
&&R_1=\frac{(c^2(\bar{\rho}^+(r_s),\bar{K}^+)+\gamma(\bar{U}^+(r_s))^2) R_{01}-\gamma \bar{U}^+(r_s)R_{02} + (\gamma-1)(\bar{\rho}^+ \bar{U}^+)(r_s)R_{03}}{\bar{\rho}^+(r_s) (c^2(\bar{\rho}^+(r_s),\bar{K}^+)-(\bar{U}^+(r_s))^2)}:= \sum_{i=1}^3 b_{1i} R_{0i},\\\no
&&R_2=\frac{\gamma-1}{(\bar{\rho}^+(r_s))^{\gamma-1}}\left(\bar{U}^+(r_s)R_{01}-R_{02}+ \bar{\rho}^+(r_s) R_{03}\right):= \sum_{i=1}^3 b_{2i} R_{0i}.
\ee

In the following, the superscript ``+" in $\bar{U}^+, \bar{P}^+,\bar{K}^+, \bar{B}^+$ will be ignored to simplify the notations. To derive the boundary conditions at the exit, one has by the definition of the Bernoulli's quantity that
\be\no
W_5= \bar{U}W_1 +\f{\p h}{\p P}(\bar{P}, \bar{K})(P-\bar{P})+ \frac{\p h}{\p K}(\bar{P}, \bar{K})W_4+ \frac12 \sum_{j=1}^3 W_j^2+E({\bf W}(r,\theta,x_3)),
\ee
where
\be\no
&&E({\bf W}(r,\theta,x_3))=\frac{\gamma}{\gamma-1}(\bar{K}+W_4)^{\frac1{\gamma}}(P({\bf W}))^{\frac{\gamma-1}{\gamma}}-\frac{\gamma}{\gamma-1}\bar{K}^{\frac1{\gamma}}\bar{P}^{\frac{\gamma-1}{\gamma}}\\\no
&&\quad\quad\quad-\frac{1}{\bar{\rho}(r)}(P({\bf W})- \bar{P})-\frac{\bar{B}-\frac{1}{2}\bar{U}^2(r)}{\gamma \bar{K}} W_4.
\ee

This, together with \eqref{pressure} implies that
\be\no
&&W_1(r_2,\theta,x_3)=\frac{W_5(r_2,\theta,x_3)}{\bar{U}(r_2)}- \frac{\epsilon P_{ex}(\theta,x_3)}{(\bar{\rho} \bar{U})(r_2)} -\frac{\bar{B}-\frac{1}{2}\bar{U}^2(r_2)}{\gamma \bar{K}\bar{U}(r_2)} W_4(r_2,\theta,x_3)\\\label{pres2}
&&\quad\quad-\frac{1}{2\bar{U}(r_2)}\sum_{j=1}^3 W_j^2(r_2,\theta,x_3)-\frac{1}{\bar{U}(r_2)}E({\bf W}(r_2,\theta,x_3)).
\ee
Note that $E$ is an error term that can be bounded by
\be\no
|E({\bf W}(r_2,\theta,x_3))|\leq C_*|{\bf W}(r_2,\theta,x_3)|^2.
\ee

The boundary conditions for $W_2$ and $W_3$ on the nozzle walls are
\be\label{slip15}\begin{cases}
W_2(r,\pm\theta_0, x_3)=0,\ \ &\text{on } r_s+W_6(\pm\theta_0,x_3)<r<r_2,x_3\in [-1,1],\\
W_3(r,\theta,\pm 1)=0,\ \ &\text{on }r_s+W_6(\theta,\pm 1)<r<r_2,\theta\in [-\theta_0,\theta_0].
\end{cases}\ee


One can rewrite the equations \eqref{ent10},\eqref{ber10}, \eqref{vor13} and \eqref{vor14} in terms of ${\bf W}$ as follows. The equations for the hyperbolic quantities $W_4$ and $W_5$ are
\be\label{ent11}
&&\left(\p_r+\frac{W_2}{\bar{U}+W_1}\frac{1}{r}\p_{\theta}+\frac{W_3}{\bar{U}+W_1}\p_{x_3}\right) W_4=0,\\\label{ber11}
&&\left(\p_r+\frac{W_2}{\bar{U}+W_1}\frac{1}{r}\p_{\theta}+\frac{W_3}{\bar{U}+W_1}\p_{x_3}\right) W_5=0.
\ee
The equations for the vorticity $\omega$ are
\be\no
&&\left(\p_r+\frac{W_2}{\bar{U}+W_1}\frac{1}{r}\p_{\theta} +\frac{W_3}{\bar{U}+W_1}\p_{x_3}\right)\om_1 +\left(\f{1}{r} +\frac{1}{r}\p_{\theta}\b(\frac{W_2}{\bar{U}+W_1}\b)+\p_{x_3}\b(\frac{W_3}{\bar{U}+W_1}\b)\right)\om_1\\\label{vor21}
&&\quad+ \frac{1}{r}\p_{\theta}\b(\frac1{\bar{U}+W_1}\b) \p_{x_3}W_5-\p_{x_3}\b(\frac{1}{\bar{U}+W_1}\b) \frac{1}{r}\p_{\theta} W_5\\\no
&&\quad-\f{1}{r} \p_{\theta} \b(\frac{\bar{B}-\frac{1}{2}\bar{U}^2+W_5-\bar{U}W_1-\frac{1}{2}\sum_{j=1}^3W_j^2}{\gamma (\bar{K}+W_4)(\bar{U}+W_1)}\b) \p_{x_3} W_4\\\no
&&\quad+ \p_{x_3} \b(\frac{\bar{B}-\frac{1}{2}\bar{U}^2+W_5-\bar{U}W_1-\frac{1}{2}\sum_{j=1}^3W_j^2}{\gamma (\bar{K}+W_4)(\bar{U}+W_1)}\b) \frac{1}{r}\p_{\theta} W_4=0,
\ee
and
\be\label{vor22}\begin{cases}
\om_2 =\frac{W_2\om_1 +\p_{x_3} W_5}{\bar{U}+W_1} - \frac{\bar{B}-\frac{1}{2}\bar{U}^2+W_5-\bar{U}W_1-\frac{1}{2}\sum_{j=1}^3W_j^2} {\gamma (\bar{K}+W_4)(\bar{U}+W_1)}\p_{x_3} W_4,\\
\om_3 =\frac{W_3\om_1 -\frac{1}{r}\p_{\theta} W_5}{\bar{U}+W_1} + \frac{\bar{B}-\frac{1}{2}\bar{U}^2+W_5-\bar{U}W_1-\frac{1}{2}\sum_{j=1}^3W_j^2} {\gamma (\bar{K}+W_4)(\bar{U}+W_1)}\frac{1}{r}\p_{\theta} W_4.
\end{cases}\ee

It follows from \eqref{den11} that
\be\no
&&(1-\bar{M}^2(r))\p_r W_1 + \frac{1}{r}\p_{\theta} W_2 +\p_{x_3} W_3 + \frac{1}{r}(1+\frac{\bar{M}^2(2+(\gamma-1)\bar{M}^2)}{1-\bar{M}^2})W_1\\\label{den15}
&&\quad=-\frac{(\gamma-1)(\bar{U}'+\frac{\bar{U}}{r})}{\bar{c}^2(r)} W_5+ F({\bf W}),
\ee
where
\be\no
&&F({\bf W})=-\frac{(\gamma-1)(\p_r W_1+\frac{W_1}{r})}{\bar{c}^2(r)} W_5+ \frac{\bar{U}'+\p_r W_1}{\bar{c}^2(r)}(\frac{\gamma+1}{2} W_1^2+\frac{\gamma-1}{2}(W_2^2+W_3^2))\\\no
&&\quad\quad+\frac{(\gamma-1)(\bar{U}+W_1)}{2r \bar{c}^2(r)}\sum_{j=1}^3 W_j^2+\frac{(\gamma+1)\bar{U}W_1 \p_r W_1+ (\gamma-1)\bar{U} \frac{W_1^2}{r}}{\bar{c}^2(r)}\\\no
&&\quad\quad- \frac{1}{\bar{c}^2(r)}((\gamma-1)W_5-\frac{\gamma-1}{2}\sum_{j=1}^3 W_j^2-(\gamma-1)\bar{U}W_1)(\frac{1}{r}\p_{\theta} W_2+\p_{x_3} W_3)\\\no
&&\quad\quad
+\frac{1}{\bar{c}^2(r)}(W_2^2\frac{1}{r}\p_{\theta} W_2 +W_3^2 \p_{x_3}W_3)+ \frac{\bar{U}+W_1}{\bar{c}^2(r)}(W_2\p_r W_2+W_3\p_r W_3)\\\no
&&\quad\quad+\frac{W_2}{\bar{c}^2(r)}((\bar{U}+W_1)\frac{1}{r}\p_{\theta} W_1 +\frac{W_3}{r}\p_{\theta} W_3)+ \frac{V_3}{\bar{c}^2(r)}((\bar{U}+W_1)\p_{x_3} W_1 +W_2\p_{x_3} W_2),\\\no
&&\bar{c}^2(r)=c^2(\bar{B},\bar{U}^2(r)).
\ee
Here $F({\bf W})$ and the following $H_i, G_i$ are quadratic and high order terms. In order to verify that these terms satisfy some compatibility conditions, their exact form are presented here.

Then to solve the problem \eqref{euler-cyl} with \eqref{super1}-\eqref{pressure}, and \eqref{rh} is equivalent to find a function $W_6$ defined on $E$ and vector functions $(W_1,\cdots, W_5)$ defined on the $\Omega_{W_6}:=\{(r,\theta,x_3): r_s +W_6(\theta,x_3)<r<r_2,(\theta,x_3)\in E\}$, which solves the equations \eqref{ent11}--\eqref{den15} with boundary conditions \eqref{shock12},\eqref{shock13},\eqref{pres2} and \eqref{slip15}.

\subsection{The coordinate transformation}\noindent

To fix the subsonic domain, relabeling $V_6(\theta,x_3)=\xi(\theta,x_3)-r_s$, one can introduce the following coordinates transformation
\be\label{coor}
y_1=\frac{r-r_s-V_6}{r_2-r_s-V_6}(r_2-r_s) + r_s,\ y_2=\theta,\  y_3=x_3.
\ee
Then
\be\no\begin{cases}
r= y_1+\frac{r_2-y_1}{r_2-r_s}V_6=: D_0^{V_6},\\
\p_r=\frac{r_2-r_s}{r_2-r_s-V_6(y_2,y_3)} \p_{y_1}=: D_1^{V_6},\\
\frac{1}{r}\p_{\theta}=\frac{1}{D_0^{V_6}}(\p_{y_2}+\frac{(y_1-r_2)\p_{y_2}V_6}{r_2-r_s-V_6}\p_{y_1})=: D_2^{V_6},\\
\p_{x_3}=\p_{y_3}+\frac{(y_1-r_2)\p_{y_3}V_6}{r_2-r_s-V_6}\p_{y_1}=: D_3^{V_6},
\end{cases}\ee
and the domain $\Omega^+$ is changed to be
\be\no
\mathbb{D}=\{(y_1,y'): y_1\in (r_s, r_2), y'=(y_2,y_3)\in E\}.
\ee
Denote
\be\no
&&\Sigma_2^{\pm}=\{(y_1,\pm\theta_0,y_3):(y_1,y_3)\in (r_s,r_2)\times (-1,1)\},\\\no
&&\Sigma_3^{\pm}=\{(y_1,y_2,\pm 1):(y_1,y_2)\in (r_s,r_2)\times (-\theta_0,\theta_0)\}.
\ee

Set
\be\no\begin{cases}
V_j(y)= W_j(y_1+\f{r_2-y_1}{r_2-r_s}V_6,y_2,y_3), j=1,\cdots, 5,\\
\tilde{\omega}_j(y)=\omega_j(y_1+\f{r_2-y_1}{r_2-r_s}V_6,y_2,y_3), j=1,2,3.
\end{cases}\ee
Then the functions $\rho(r,\theta,x_3)$ and $P(r,\theta,x_3)$ in \eqref{denw1}-\eqref{denw2} are transformed to be
\be\no
&&\tilde{\rho}({\bf V}(y),V_6)=\left(\frac{\gamma-1}{\gamma(\bar{K}+V_4)}\right)^{\frac{1}{\gamma-1}}\left(\bar{B}+V_5-\frac{1}{2}(\bar{U}(D_0^{V_6})+V_1)^2-\frac{1}{2}\sum_{i=2}^3 V_j^2\right)^{\frac{1}{\gamma-1}},\\\no
&&\tilde{P}({\bf V}(y),V_6)=\left(\frac{\gamma-1}{\gamma}\right)^{\frac{\gamma}{\gamma-1}}\left(\frac{1}{\bar{K}+V_4}\right)^{\frac{1}{\gamma-1}}\left(\bar{B}+V_5-\frac{1}{2}(\bar{U}(D_0^{V_6})+V_1)^2-\frac{1}{2}\sum_{i=2}^3 V_j^2\right)^{\frac{\gamma}{\gamma-1}}.
\ee
In the $y$-coordinates, \eqref{shock12} is changed to be
\be\no
&&\frac{1}{r_s}\p_{y_2} V_6(y')=a_0 V_2(r_s,y') + g_2({\bf V}(r_s,y'), V_6),\\\no
&&\p_{y_3} V_6(y')=a_0 V_3(r_s,y') + g_3({\bf V}(r_s,y'), V_6),
\ee
where
\be\label{g21}
&&g_2({\bf V}(r_s,y'), V_6)=\frac{1}{r_s}\left(\frac{(r_s+V_6)J_2({\bf V}(r_s,y'), V_6(y'))}{J({\bf V}(r_s,y'), V_6(y'))}-a_0r_s V_2(r_s,y')\right),\\\label{g31}
&&g_3({\bf V}(r_s,y'), V_6)=\frac{1}{r_s}\left(\frac{J_3({\bf V}(r_s,y'), V_6(y'))}{J({\bf V}(r_s,y'), V_6(y'))}-a_0 V_3(r_s,y')\right),
\ee
and the exact formulas for $J_2, J_3$ and $J$ are given in the Appendix (See \eqref{j21}-\eqref{j1}). These will be used to verify the compatibility conditions required below (See \eqref{j211}).

In the $y$ coordinates, the transonic shock problem can be reformulated as follows. The shock front will be determined by the first equation in \eqref{shock13} as follows
\be\label{shock400}
V_6(y_2,y_3)=\frac{1}{a_1}V_1(r_s,y_2,y_3)- \frac{1}{a_1}R_1({\bf V}(r_s,y'),V_6(y')),
\ee
where $R_{1}({\bf V}(r_s,y'),V_6(y'))=\displaystyle\sum_{i=1}^3 b_{1i}R_{0i}({\bf V}(r_s,y'),V_6(y'))$ and the exact formulas for $R_{0i}, i=1,2,3$ in $y$-coordinates are given in the Appendix (See \eqref{rv01}-\eqref{rv03}). These will be employed to verify the compatibility conditions (See \eqref{rv11}-\eqref{rv41} below).

The second and third formula in \eqref{shock13} will be used to solve the Bernoulli's quantity and entropy. The function $V_4$ and $V_5$ would satisfy (see \eqref{ber11} and \eqref{ent11})
\be\label{ber31}\begin{cases}
\b(D_1^{V_6}+\f{V_2}{\bar{U}(D_0^{V_6})+V_1}D_2^{V_6}+\frac{V_3}{\bar{U}(D_0^{V_6})+V_1}D_3^{V_6}\b)V_5=0,\\
V_5(r_s,y')=B^-(r_s+V_6(y'),y')-\bar{B}^-
\end{cases}\ee
and
\be\label{ent31}\begin{cases}
\b(D_1^{V_6}+\f{V_2}{\bar{U}(D_0^{V_6})+V_1}D_2^{V_6}+\frac{V_3}{\bar{U}(D_0^{V_6})+V_1}D_3^{V_6}\b)V_4=0,\\
V_4(r_s,y')=a_2 V_6(y')+R_2({\bf V}(r_s,y'),V_6(y'))
\end{cases}\ee
where $R_{2}({\bf V}(r_s,y'),V_6(y'))=\displaystyle\sum_{i=1}^3 b_{2i}R_{0i}({\bf V}(r_s,y'),V_6(y'))$.

The following reformulation of the jump conditions \eqref{shock12} is crucial for us to solve the transonic shock problem. Note that \eqref{shock12} is equivalent to
\be\no\begin{cases}
F_2(y'):=\frac{1}{r_s}\p_{y_2} V_6- a_0 V_2(r_s, y')- g_2({\bf V}(r_s,y'),V_6)\equiv 0,\ \ &\forall y'\in E,\\
F_3(y'):=\p_{y_3}V_6- a_0 V_3(r_s,y')- g_3({\bf V}(r_s,y'),V_6)\equiv 0,\ \ &\forall y'\in E.
\end{cases}\ee
Then a key observation is the following
\begin{lemma}\label{equi0}
{\it Let $F_j, j=2,3$ be two $C^1$ smooth functions defined on $\overline{E}$. Then the following two statements are equivalent
\begin{enumerate}[(1)]
  \item $F_2=F_3\equiv 0$ on $\overline{E}$;
  \item $F_2$ and $F_3$ solve the following problem
  \be\label{equi00}\begin{cases}
  \frac{1}{r_s}\p_{y_2}F_3-\p_{y_3} F_2=0,\ \ &\text{in}\ E,\\
  \frac{1}{r_s}\p_{y_2} F_2 + \p_{y_3} F_3=0,\ \ &\text{in}\ E,\\
  F_2(\pm\theta_0, y_3)=0,\ \ &\text{on}\ \ y_3\in [-1,1],\\
  F_3(y_2, \pm 1)=0,\ \ &\text{on}\ \ y_2\in [-\theta_0,\theta_0].
\end{cases}\ee
\end{enumerate}
}\end{lemma}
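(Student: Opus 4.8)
The implication $(1)\Rightarrow(2)$ needs no argument: if $F_2\equiv F_3\equiv 0$ on $\overline E$ then every equation and boundary relation in \eqref{equi00} holds trivially. So the substance is the converse, and my plan for $(2)\Rightarrow(1)$ is to view \eqref{equi00} as a div-curl type system on the simply connected rectangle $E$ and to repackage it as a homogeneous Neumann problem for a second order elliptic scalar equation, which then forces the solution to be constant.

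Concretely, I would proceed in four short steps. (i) \emph{Rescale} by setting $s=r_sy_2$, so that $E$ becomes a rectangle $\widetilde E=(-r_s\theta_0,r_s\theta_0)\times(-1,1)$ and the first two lines of \eqref{equi00} become $\partial_sF_3-\partial_{y_3}F_2=0$ and $\partial_sF_2+\partial_{y_3}F_3=0$ in $\widetilde E$. (ii) \emph{Introduce a potential.} The first identity says precisely that the $1$-form $F_2\,ds+F_3\,dy_3$ is closed on $\widetilde E$; since $\widetilde E$ is simply connected, the Poincar\'e lemma yields $\phi$ with $\partial_s\phi=F_2$, $\partial_{y_3}\phi=F_3$, and $\phi\in C^2(\overline{\widetilde E})$ because $(F_2,F_3)\in C^1(\overline E)$. (iii) \emph{Read off the elliptic problem.} Substituting the potential into the divergence-type identity gives $\Delta\phi=\partial_s^2\phi+\partial_{y_3}^2\phi=0$ in $\widetilde E$, while $F_2=0$ on $y_2=\pm\theta_0$ and $F_3=0$ on $y_3=\pm1$ translate, respectively, into $\partial_s\phi=0$ and $\partial_{y_3}\phi=0$ there; since the outward unit normal on each side of a rectangle is one of $\pm e_s,\pm e_{y_3}$, this is exactly $\partial_n\phi=0$ on all of $\partial\widetilde E$. (iv) \emph{Energy identity.} Because $\widetilde E$ is Lipschitz and $\phi\in C^2(\overline{\widetilde E})$, the divergence theorem gives
\[
\int_{\widetilde E}|\nabla\phi|^2\,ds\,dy_3=\int_{\partial\widetilde E}\phi\,\partial_n\phi\,d\sigma-\int_{\widetilde E}\phi\,\Delta\phi\,ds\,dy_3=0,
\]
hence $\nabla\phi\equiv0$, i.e. $F_2\equiv F_3\equiv0$ on $\overline{\widetilde E}$ and therefore on $\overline E$, which is statement $(1)$.

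I do not anticipate a genuine obstacle; the lemma is elementary and its value is structural. The only points deserving a line of care are the regularity of the potential $\phi$ (it is merely $C^2$, but that is exactly what the integration by parts on a rectangle requires, the corners being harmless since $E$ is Lipschitz) and the bookkeeping that shows the four scalar boundary relations in \eqref{equi00} assemble into a single homogeneous Neumann condition on all of $\partial E$. As an alternative to steps (ii)--(iv) one may observe that $F_2-iF_3$ is holomorphic in $r_sy_2+iy_3$, with vanishing real part on two opposite sides and vanishing imaginary part on the other two, and finish by Schwarz reflection and Liouville's theorem; but the potential/energy route above is shorter and sidesteps the reflection at the corners. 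Either way, Lemma \ref{equi0} is what lets us trade the two oblique shock relations \eqref{rh0} for the div-curl system with normal boundary data \eqref{rh01}, from which the boundary value of $\omega_1$ and the unusual second order differential condition on the shock front are subsequently extracted.
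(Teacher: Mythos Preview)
Your proof is correct and follows essentially the same route as the paper: introduce a potential from the curl-free equation on the simply connected rectangle, substitute into the divergence equation to obtain a Laplace-type equation with homogeneous Neumann data, and conclude the potential is constant. The only cosmetic differences are that you rescale $s=r_sy_2$ first (the paper keeps the original coordinates and obtains $\frac{1}{r_s^2}\partial_{y_2}^2\Phi+\partial_{y_3}^2\Phi=0$) and that you spell out the energy identity where the paper simply asserts $\Phi\equiv\text{const}$.
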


\begin{proof}
It suffices to show that (2) implies (1). Indeed, it follows from the first equation in \eqref{equi00} that there exists a potential function $\Phi(y_2,y_3)\in C^2(\overline{E})$ such that $F_2=\p_{y_2} \Phi$ and $\frac{1}{r_s} F_3=\p_{y_3} \Phi$ on $\overline{E}$. Then the second equation in \eqref{equi00} yields
\be\no\begin{cases}
\frac{1}{r_s^2}\p_{y_2}^2\Phi+\p_{y_3}^2 \Phi = 0,\ \ &\text{in }E,\\
\p_{y_2} \Phi(\pm\theta_0, y_3)=0,\ \ &\text{on}\ \ y_3\in [-1,1],\\
\p_{y_3}\Phi(y_2,\pm 1)=0,\ \ &\text{on}\ \ y_2\in [-\theta_0,\theta_0].
\end{cases}\ee
Thus one can conclude that $\Phi\equiv \text{const}$ on $\overline{E}$ and thus $F_2=F_3\equiv 0$ on $\overline{E}$.
\end{proof}

The first equation in \eqref{equi00} yields
\be\label{shock17}
(\frac{1}{r_s}\p_{y_2} V_3-\p_{y_3}V_2)(r_s,y')=\frac{1}{a_0} (\p_{y_3}g_2-\frac{1}{r_s}\p_{y_2}g_3)({\bf V}(r_s,y'), V_6),
\ee
which gives the boundary data on the shock front for the first component of the vorticity.

The second equation in \eqref{equi00} gives
\be\no
(\frac{1}{r_s^2}\p_{y_2}^2 V_6+ \p_{y_3}^2 V_6-\frac{a_0}{r_s}\p_{y_2} V_2-a_0\p_{y_3} V_3)(r_s, y')=(\frac{1}{r_s}\p_{y_2}g_2+ \p_{y_3}g_3)({\bf V}(r_s,y'), V_6).
\ee
This, together with \eqref{shock400}, shows
\be\label{shock19}
\{(\frac{1}{r_s^2}\p_{y_2}^2 V_1+\p_{y_3}^2 V_1)-a_0a_1(\frac{1}{r_s}\p_{y_2} V_2+\p_{y_3} V_3)\}(r_s,y')=q_1({\bf V}(r_s,y'),V_6),
\ee
with
\be\no
q_1=a_1(\frac{1}{r_s}\p_{y_2}g_2+\p_{y_3}g_3)({\bf V}(r_s,y'), V_6)+(\frac{1}{r_s^2}\p_{y_2}^2 R_1+\p_{y_3}^2 R_1)({\bf V}(r_s,y'), V_6).
\ee
The condition \eqref{shock19} is used as the boundary condition on the shock front for the deformation-curl system associated with the velocity field.

The boundary conditions in \eqref{equi00} can be rewritten as
\be\label{shock20}
&&\left(\frac{1}{r_s}\p_{y_2}V_1-a_0a_1V_2\right)(r_s,\pm \theta_0,y_3)=q_2^{\pm}({\bf V}(r_s,\pm\theta_0,y_3),V_6(\pm\theta_0,y_3)),\\\label{shock21}
&&(\p_{y_3} V_1-a_0 a_1 V_3)(r_s,y_2,\pm 1)= q_3^{\pm}({\bf V}(r_s,y_2,\pm 1),V_6(y_2,\pm 1)),
\ee
with
\be\no
&&q_2^{\pm}({\bf V}(r_s,\cdot),V_6)(\pm\theta_0,y_3)=(\frac{1}{r_s}\p_{y_2} \{R_1({\bf V}(r_s,\cdot), V_6(\cdot))\}+ g_2({\bf V}(r_s,\cdot), V_6))(\pm \theta_0, y_3),\\\no
&&q_3^{\pm}({\bf V}(r_s,\cdot),V_6)(y_2,\pm 1)=(\p_{y_3}\{R_1({\bf V}(r_s,\cdot), V_6(\cdot))\}+g_3({\bf V}(r_s,\cdot), V_6))(y_2,\pm 1).
\ee
The roles of \eqref{shock20} and \eqref{shock21} will be indicated later.

Next we determine the vorticity. Rewrite \eqref{vor21} as
\be\label{vor400}
\b(D_1^{V_6}+\frac{1}{\bar{U}(D_0^{V_6})+V_1}\sum\limits_{j=2}^3V_j D_j^{V_6}\b) \tilde{{\omega}}_1 + \mu({\bf V},V_6)\tilde{\omega}_1=H_0({\bf V},V_6),
\ee
where
\be\no
&&\mu({\bf V},V_6)=\bigg\{\displaystyle\sum_{j=2}^3D_j^{V_6}\bigg(\frac{V_j}{\bar{U}(D_0^{V_6})+V_1}\bigg)+\frac{1}{D_0^{V_6}}\bigg\},\\\no
&&H_0({\bf V}, V_6)=D_3^{V_6}\left(\frac{1}{\bar{U}(D_0^{V_6})+V_1}\right)D_2^{V_6} V_5- D_2^{V_6}\left(\frac{1}{\bar{U}(D_0^{V_6})+V_1}\right)D_3^{V_6} V_5\\\no
&&\quad+D_2^{V_6}\left(\frac{\bar{B}+V_5-\frac{1}{2}(\bar{U}(D_0^{V_6})+V_1)^2-\frac{1}{2}(V_2^2+V_3^2)}{\gamma(\bar{K}+V_4)(\bar{U}(D_0^{V_6})+V_1)}\right)D_3^{V_6} V_4\\\no
&&\quad- D_3^{V_6}\left(\frac{\bar{B}+V_5-\frac{1}{2}(\bar{U}(D_0^{V_6})+V_1)^2-\frac{1}{2}(V_2^2+V_3^2)}{\gamma(\bar{K}+V_4)(\bar{U}(D_0^{V_6})+V_1)}\right)D_2^{V_6} V_4.
\ee
Then \eqref{shock17} gives the boundary data for $\tilde{\omega}_1$ at $y_1=r_s$
\be\label{vor401}
\tilde{\omega}_1(r_s,y')=\frac{1}{a_0}(\p_{y_3} g_2-\frac{1}{r_s}\p_{y_2}g_3)({\bf V}(r_s,y'), V_6)+g_4({\bf V}(r_s,y'), V_6),
\ee
with
\be\label{g4}
&&g_4({\bf V}, V_6)=\frac{(y_1-r_2)V_6\p_{y_2}V_3(r_s,y')}{y_1((r_2-r_s)y_1+(r_2-y_1)V_6)}-\frac{(y_1-r_2)\p_{y_3}V_6\p_{y_1}V_2(r_s,y')}{r_2-r_s-V_6}\\\no
&&\quad\quad\quad +\frac{r_2-r_s}{(r_2-r_s)y_1+(r_2-y_1)V_6}\frac{(y_1-r_2)\p_{y_2}V_6 \p_{y_1}V_3(r_s,y')}{r_2-r_s-V_6}.
\ee
On the other hand, \eqref{vor22} implies that
\be\label{vor402}
&&\tilde{\omega}_2=D_3^{V_6} V_1- D_1^{V_6} V_3\\\no
&&=\frac{V_2\tilde{\omega}_1+D_3^{V_6} V_5}{\bar{U}(D_0^{V_6})+V_1}-\frac{\bar{B}-\frac{1}{2}\bar{U}^2(D_0^{V_6})+V_5-\bar{U}(D_0^{V_6})V_1-\frac{1}{2}\sum_{j=1}^3 V_j^2}{\gamma (\bar{U}(D_0^{V_6})+V_1)(\bar{K}+V_4)} D_3^{V_6} V_4,\\\label{vor403}
&&\tilde{\omega}_3=D_1^{V_6} V_2-D_2^{V_6} V_1+ \frac{V_2}{D_0^{V_6}}\\\no
&&=\frac{V_3\tilde{\omega}_1-D_2^{V_6} V_5}{\bar{U}(D_0^{V_6})+V_1}+\frac{\bar{B}-\frac{1}{2}\bar{U}^2(D_0^{V_6})+V_5-\bar{U}(D_0^{V_6})V_1-\frac{1}{2}\sum_{j=1}^3 V_j^2}{\gamma (\bar{U}(D_0^{V_6})+V_1)(\bar{K}+V_4)} D_2^{V_6} V_4.
\ee

Collecting the principal terms and putting the quadratic terms on the right hand sides, one gets from direct computations and \eqref{vor402}-\eqref{vor403} that
\be\label{vor404}
&&\frac{1}{y_1}\p_{y_2} V_3-\p_{y_3} V_2=\tilde{\omega}_1(y)+H_1({\bf V},V_6),\\\label{vor405}
&&\p_{y_3} V_1- \p_{y_1} V_3 + \frac{\bar{B}-\frac12 \bar{U}^2(y_1)}{\gamma \bar{K} \bar{U}(y_1)} \p_{y_3} V_4=\frac{V_2\tilde{\omega}_1+D_3^{V_6} V_5}{\bar{U}(D_0^{V_6})+V_1}+ H_2({\bf V},V_6),\\\label{vor406}
&&\p_{y_1} V_2+\frac{V_2}{y_1}-\frac{1}{y_1} \p_{y_2} V_1 - \frac{\bar{B}-\frac12 \bar{U}^2(y_1)}{\gamma \bar{K} \bar{U}(y_1)} \frac{1}{y_1}\p_{y_2} V_4=\frac{V_3\tilde{\omega}_1-D_2^{V_6} V_5}{\bar{U}(D_0^{V_6})+V_1}+H_3,
\ee
where
\be\no
&&H_1({\bf V},V_6)=\frac{(y_1-r_2)\p_{y_3}V_6 \p_{y_1}V_2}{(r_2-r_s-V_6)}-\frac{(y_1-r_2)V_6\p_{y_2} V_3}{y_1((r_2-r_s)y_1+(r_2-y_1)V_6)}\\\no
&&\quad\quad-\frac{r_2-r_s}{(r_2-r_s)y_1+(r_2-y_1)V_6}\frac{(y_1-r_2)\p_{y_2}V_6 \p_{y_1}V_3(y_1,y')}{r_2-r_s-V_6},\\\no
&&H_2({\bf V},V_6)=-\frac{(y_1-r_2)\p_{y_3} V_6}{r_2-r_s-V_6}\p_{y_1} V_1 + \frac{V_6\p_{y_1}V_3}{r_2-r_s-V_6}-\frac{V_5-\bar{U}(D_0^{V_6})V_1-\frac{1}{2}\sum_{j=1}^3 V_j^2}{\gamma (\bar{U}(D_0^{V_6})+V_1)(\bar{K}+V_4)} D_3^{V_6} V_4\\\no
&&\quad\quad-\left(\frac{\bar{B}-\frac{1}{2}\bar{U}^2(D_0^{V_6})}{\gamma (\bar{U}(D_0^{V_6})+V_1)(\bar{K}+V_4)}-\frac{\bar{B}-\frac12 \bar{U}^2(y_1)}{\gamma \bar{K} \bar{U}(y_1)}\right)\p_{y_3} V_4\\\no
&&\quad\quad-\frac{\bar{B}-\frac12 \bar{U}^2(D_0^{V_6})}{\gamma (\bar{K}+V_4)(\bar{U}(D_0^{V_6})+V_1)}\frac{(y_1-r_2)\p_{y_3}V_6}{r_2-r_s-V_6}\p_{y_1} V_4,\\\no
&&H_3({\bf V},V_6)=-\frac{V_6}{r_2-r_s-V_6}\p_{y_1} V_2+ \frac{(r_2-y_1)V_6(V_2+\p_{y_2} V_1)}{y_1((r_2-r_s)y_1+(r_2-y_1)V_6)}\\\no
&&\quad\quad+\frac{1}{D_0^{V_6}}\frac{(y_1-r_2)\p_{y_2}V_6\p_{y_1}V_1}{r_2-r_s-V_6}+ \frac{V_5-\bar{U}(D_0^{V_6})V_1-\frac{1}{2}\sum_{j=1}^3 V_j^2}{\gamma (\bar{U}(D_0^{V_6})+V_1)(\bar{K}+V_4)} D_2^{V_6} V_4\\\no
&&\quad\quad+ \left(\frac{\bar{B}-\frac{1}{2}\bar{U}^2(D_0^{V_6})}{\gamma (\bar{U}(D_0^{V_6})+V_1)(\bar{K}+V_4)}D_2^{V_6} V_4-\frac{\bar{B}-\frac12 \bar{U}^2(y_1)}{\gamma \bar{K} \bar{U}(y_1)}\frac{1}{y_1}\p_{y_2} V_4\right).
\ee
The boundary conditions on $\Sigma_2^{\pm}$ and $\Sigma_3^{\pm}$, \eqref{slip1}, become
\be\label{slip2}\begin{cases}
V_2(y_1,\pm\theta_0, y_3)=0,\ \ &\text{on }\Sigma_2^{\pm},\\
V_3(y_1,y_2,\pm 1)=0,\ \ &\text{on }\Sigma_3^{\pm}.
\end{cases}\ee

Furthermore, the equation \eqref{den15} can be rewritten as
\be\label{den20}
d_1\p_{y_1} V_1+ \frac{1}{y_1} \p_{y_2} V_2 + \p_{y_3} V_3 +\frac{V_1}{y_1}+ d_2V_1=-\frac{(\gamma-1)(\bar{U}'+\frac{\bar{U}}{y_1})}{c^2(\bar{\rho},\bar{K})} V_5+ G_0,
\ee
with
\be\no
&&d_1(y_1)=1-\bar{M}^2(y_1),\ \ d_2(y_1)=\frac{\bar{M}^2(2+(\gamma-1)\bar{M}^2(y_1))}{y_1(1-\bar{M}^2(y_1))},\\\no
&&G_0({\bf V},V_6)=\mathbb{F}({\bf V},V_6)-\left(d_1(D_0^{V_6})D_1^{V_6} V_1-d_1(y_1)\p_{y_1} V_1\right)-(D_2^{V_6} V_2-\frac{1}{y_1}\p_{y_2}V_2)\\\no
&&\quad-(D_3^{V_6}V_3-\p_{y_3} V_3)- \left((\frac{1}{D_0^{V_6}} +d_2(D_0^{V_6}))V_1-(\frac{1}{y_1}+d_2(y_1)) V_1\right)
\ee
and
\be\no
&&\mathbb{F}({\bf V},V_6)=-\frac{(\gamma-1)(D_1^{V_6} V_1+\frac{V_1}{D_0^{V_6}})}{\bar{c}^2(D_0^{V_6})} V_5+ \frac{\bar{U}'(D_0^{V_6})+D_1^{V_6} V_1}{\bar{c}^2(D_0^{V_6})}\left(\frac{\gamma+1}{2} V_1^2+\frac{\gamma-1}{2}(V_2^2+V_3^2)\right)\\\no
&&\quad\quad+\frac{(\gamma-1)(\bar{U}(D_0^{V_6})+V_1)}{2 D_0^{V_6} \bar{c}^2(D_0^{V_6})}\sum_{j=1}^3 V_j^2+\frac{(\gamma+1)\bar{U}(D_0^{V_6})V_1 D_1^{V_6} V_1+ (\gamma-1)\bar{U}(D_0^{V_6}) \frac{V_1^2}{D_0^{V_6}}}{\bar{c}^2(D_0^{V_6})}\\\no
&&\quad\quad- \frac{1}{\bar{c}^2(D_0^{V_6})}\left((\gamma-1)V_5-\frac{\gamma-1}{2}\sum_{j=1}^3 V_j^2-(\gamma-1)\bar{U}(D_0^{V_6})V_1\right)(D_2^{V_6} V_2+D_3^{V_6} V_3)\\\no
&&\quad\quad
+\frac{1}{\bar{c}^2(D_0^{V_6})}(V_2^2D_2^{V_6} V_2 +V_3^2 D_3^{V_6}V_3)+ \frac{\bar{U}(D_0^{V_6})+V_1}{\bar{c}^2(D_0^{V_6})}(V_2D_1^{V_6} V_2+V_3D_1^{V_6} V_3)\\\no
&&\quad\quad+\frac{V_2}{\bar{c}^2(D_0^{V_6})}((\bar{U}(D_0^{V_6})+V_1)D_2^{V_6} V_1 +V_3D_2^{V_6} V_3)+ \frac{V_3}{\bar{c}^2(D_0^{V_6})}((\bar{U}(D_0^{V_6})+V_1)D_3^{V_6} V_1 +V_2D_3^{V_6} V_2).
\ee

Finally, the boundary condition \eqref{pres2} at the exit becomes
\be\label{pres3}
&&V_1(r_2,y')+\frac{\bar{B}-\frac{1}{2}\bar{U}^2(r_2)}{\gamma \bar{K}\bar{U}(r_2)}V_4(r_2,y')=\frac{V_5(r_2,y')}{\bar{U}(r_2)}-\frac{\epsilon P_{ex}(y')}{(\bar{\rho}\bar{U})(r_2)}\\\no
&&\quad\quad\quad-\frac{1}{2\bar{U}(r_2)}\sum_{j=1}^3 V_j^2(r_2,y')-\frac{1}{\bar{U}(r_2)} E({\bf V}(r_2,y')),
\ee
where
\be\no
&&E({\bf V}(r_2,y'))=\frac{\gamma}{\gamma-1}(\bar{K}+V_4(r_2,y'))^{\frac1{\gamma}}(\tilde{P}({\bf V})(r_2,y'))^{\frac{\gamma-1}{\gamma}}-\frac{\gamma}{\gamma-1}\bar{K}^{\frac1{\gamma}}\bar{P}^{\frac{\gamma-1}{\gamma}}\\\label{err3}
&&\quad\quad-\frac{1}{\bar{\rho}(r_2)}(\tilde{P}({\bf V})(r_2,y')- \bar{P}(r_2))-\frac{\bar{B}-\frac{1}{2}\bar{U}^2(r_2)}{\gamma \bar{K}} V_4(r_2,y').
\ee

Therefore after the coordinates transformation \eqref{coor}, the transonic shock problem \eqref{euler-cyl} with \eqref{super1}-\eqref{pressure}, and \eqref{rh} is equivalent to solve the following problem:

{\bf Problem TS.} Find a function $V_6$ defined on $E$ and vector functions $(V_1,\cdots, V_5)$ defined on the $\mathbb{D}$, which solve the transport equations \eqref{ber31}-\eqref{ent31}, \eqref{vor400},\eqref{vor404}-\eqref{vor406} and \eqref{den20} with boundary conditions \eqref{shock400}, \eqref{shock19}-\eqref{shock21}, \eqref{vor401},\eqref{slip2} and \eqref{pres3}.

Theorem \ref{existence} then follows directly from the following result.
\begin{theorem}\label{main}
{\it Assume that the compatibility conditions \eqref{pressure-cp} and \eqref{super3} hold. There exists a small constant $\epsilon_0>0$ depending only on the background solution $\overline{\bm{\Psi}}$ and the boundary data $U_{1,0}^-,U_{2,0}^-,U_{3,0}^-$, $P_0^-, K_0^-$, $P_{ex}$ such that if $0\leq \epsilon<\epsilon_0$, the problem \eqref{ber31}-\eqref{ent31},\eqref{vor400},\eqref{vor404}-\eqref{vor406},\eqref{den20} with boundary conditions \eqref{shock400}, \eqref{shock19}-\eqref{shock21},\eqref{vor401},\eqref{slip2} and \eqref{pres3} has a unique solution $(V_1,V_2,V_3,V_4,V_5)(y)$ with the shock front $\mathcal{S}: y_1=V_6(y')$ satisfying the following properties.
\begin{enumerate}[(1)]
  \item The function $V_6(y')\in C^{3,\alpha}(\overline{E})$ satisfies
  \be\no
  \|V_6(y')\|_{C^{3,\alpha}(\overline{E})}\leq C_*\epsilon,
  \ee
  and
  \be\label{shock91}\begin{cases}
  \p_{y_2}V_6(\pm\theta_0, y_3)=\p_{y_2}^3V_6(\pm\theta_0, y_3)=0,\ \ \forall y_3\in [-1,1],\\
  \p_{y_3}V_6(y_2,\pm 1)=\p_{y_3}^3V_6(y_2, \pm 1)=0,\ \ \forall y_2\in [-\theta_0,\theta_0],
  \end{cases}\ee
  where $C_*$ depends only on the background solution and the supersonic incoming flow and the exit pressure.
  \item The solution $(V_1,V_2,V_3,V_4,V_5)(y)\in C^{2,\alpha}(\overline{\mathbb{D}})$ satisfies
  \be\no
  \sum_{j=1}^5\|V_j\|_{C^{2,\alpha}(\overline{\mathbb{D}})}\leq C_*\epsilon
  \ee
  and the compatibility conditions
  \be\label{sub52}
  \begin{cases}
  (V_2,\p_{y_2}^2V_2)(y_1,\pm\theta_0,y_3)=\p_{y_2}(V_1,V_3,V_4,V_5))(y_1,\pm\theta_0,y_3)=0, \text{on }\Sigma_2^{\pm},\\
  (V_3,\p_{y_3}^2V_3)(y_1,y_2,\pm 1)=\p_{y_3}(V_1,V_2,V_4,V_5))(y_1,y_2,\pm 1)=0,\ \text{on }\Sigma_3^{\pm}.
  \end{cases}\ee
\end{enumerate}
}\end{theorem}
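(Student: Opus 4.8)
The plan is to solve \textbf{Problem TS} by a nonlinear iteration on a closed ball $\mathcal{X}_\delta\subset (C^{2,\alpha}(\overline{\mathbb{D}}))^5\times C^{3,\alpha}(\overline{E})$ of radius $\delta=C_*\epsilon$ whose elements also satisfy the compatibility conditions \eqref{sub52}, \eqref{shock91}. Given an iterate $\hat{\mathbf{V}}=(\hat V_1,\dots,\hat V_5,\hat V_6)\in\mathcal{X}_\delta$, I would build the next one in the order dictated by the hyperbolic--elliptic splitting. First, with the coefficients $D_j^{\hat V_6}$ and the (Lipschitz, hence uniquely integrable) stream direction frozen at $\hat{\mathbf{V}}$, solve the transport equations \eqref{ber31} and \eqref{ent31} for $V_5$ and $V_4$ by integrating along characteristics issuing from $y_1=r_s$, where the data $B^-(r_s+\hat V_6,\cdot)-\bar B^-$ and $a_2\hat V_6+R_2(\hat{\mathbf{V}}(r_s,\cdot),\hat V_6)$ are $O(\epsilon)$; this gives $V_4,V_5\in C^{2,\alpha}(\overline{\mathbb{D}})$ of size $O(\epsilon)$, and the compatibility conditions at $y_1=r_s$ (which, across the shock, come from \eqref{super5} and the structure of $R_2$) propagate because the frozen vector field is tangent to the nozzle walls. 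Next solve the transport equation \eqref{vor400} for $\tilde\omega_1$ with the shock-front data \eqref{vor401}, whose source $H_0$ involves only $(V_4,V_5)$ and $\hat{\mathbf{V}},\hat V_6$; then \eqref{vor402}--\eqref{vor403} give $\tilde\omega_2,\tilde\omega_3$, so the full vorticity $(\tilde\omega_1,\tilde\omega_2,\tilde\omega_3)\in C^{1,\alpha}(\overline{\mathbb{D}})$ is determined.

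The core step is the deformation--curl system \eqref{vor404}--\eqref{vor406}, \eqref{den20} for $(V_1,V_2,V_3)$ with boundary conditions \eqref{shock19}--\eqref{shock21}, \eqref{slip2} and \eqref{pres3}. Since after linearization the source of the curl equations is not divergence free, I would follow \cite{w19} and pass to the enlarged system: introduce a function $\Pi$ solving a Poisson equation with homogeneous Dirichlet data whose right-hand side is the small divergence defect, absorb it so that the new curl source is divergence free, then write $(V_1,V_2,V_3)=\mathbf{V}^*+\nabla\phi$ with $\mathbf{V}^*\in C^{2,\alpha}$ a particular field with the prescribed curl (from a standard div--curl problem with homogeneous normal data) and $\phi$ a potential. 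Substituting into \eqref{den20} turns it into a uniformly elliptic second-order equation for $\phi$ (ellipticity from the subsonic bound $1-\bar M^2>0$ on $[r_s,r_2]$, including at $y_1=r_s$ since the background is strictly subsonic behind the shock), with a nonlocal term coming from \eqref{shock400}; the slip conditions \eqref{slip2} become homogeneous Neumann conditions for $\phi$ on $\Sigma_2^{\pm},\Sigma_3^{\pm}$; the exit condition \eqref{pres3}, nonlocal in $V_5$, becomes a local Neumann-type condition on $\{y_1=r_2\}$ (cf.\ \eqref{den37}); and, crucially, \eqref{shock19} together with \eqref{shock20}--\eqref{shock21} becomes, in the trace $\psi:=(\p_{y_1}\phi-a_0a_1\phi)|_{y_1=r_s}$, a Poisson equation $(r_s^{-2}\p_{y_2}^2+\p_{y_3}^2)\psi=q_1$ on $E$ with Neumann data $a_1q_2^{\pm},a_1q_3^{\pm}$ on $\partial E$ --- exactly the mechanism of Lemma \ref{equi0} --- from which $\psi$ is recovered (the additive constant being pinned down jointly with $\phi$, or by a normalization of the potential) and hence an oblique Robin-type condition $\p_{y_1}\phi-a_0a_1\phi=\psi$ on $\{y_1=r_s\}$ with the correct sign, since $a_0,a_1>0$. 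The resulting oblique boundary value problem for $\phi$, including the nonlocal terms (all $O(\epsilon)$ in the trace of $\phi$ on the shock front), is solved by Schauder theory together with a perturbation/auxiliary-fixed-point argument for the nonlocality. One then sets $V_1,V_2,V_3$ from $\mathbf{V}^*+\nabla\phi$ and defines the new $V_6$ by \eqref{shock400}.

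Closing the loop requires careful regularity bookkeeping. The elliptic source lies in $C^{1,\alpha}$ and the boundary data in $C^{2,\alpha}$, so $\phi\in C^{3,\alpha}(\overline{\mathbb{D}})$ and hence $V_1,V_2,V_3\in C^{2,\alpha}(\overline{\mathbb{D}})$; moreover $\psi$ solves a Poisson equation on $E$ with $C^{1,\alpha}$ right-hand side, so $\psi\in C^{3,\alpha}(\overline{E})$, and since $\p_{y_1}\phi|_{y_1=r_s}=a_0a_1\phi|_{y_1=r_s}+\psi\in C^{3,\alpha}(\overline{E})$, formula \eqref{shock400} gives $V_6\in C^{3,\alpha}(\overline{E})$ --- this is precisely how $V_6$ gains one derivative over $\mathbf{V}$. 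Regularity up to the four edges $\{y_1=r_s\}\cap\{y_2=\pm\theta_0\}$, etc., and the compatibility conditions \eqref{sub52}, \eqref{shock91} follow, by even/odd reflection across $y_2=\pm\theta_0$ and $y_3=\pm1$, from \eqref{pressure-cp} and \eqref{super3}, as carried out in the Appendix. Linear Schauder and characteristic estimates then give $\|\mathbf{V}\|_{C^{2,\alpha}}+\|V_6\|_{C^{3,\alpha}}\le C(\epsilon+\delta^2)\le\delta$ for $\delta=C_*\epsilon$ and $\epsilon$ small, so the iteration map preserves $\mathcal{X}_\delta$. Contraction is proved in the weaker norm $C^{1,\alpha}$ (for differences of iterates the quadratic structure of the sources and of $R_i,g_i,H_i,G_0$ produces a factor $O(\epsilon)$, which absorbs the one-derivative loss inherent in comparing transport solutions along different vector fields), yielding a unique fixed point; by Lemma \ref{equiv} and Lemma \ref{equi0} this is the desired solution of the original transonic shock problem, and the entropy inequality $K^+>K^-$ on the shock holds because $W_4=O(\epsilon)$ cannot overcome the $O(1)$ gap $\bar K^+-\bar K^->0$.

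The main obstacle is the second step, solving the elliptic problem for the potential. Three difficulties are intertwined there: (i) the failure of the solvability condition for the linearized curl system, which forces the enlarged system with $\Pi$ and the use of Duhamel's principle; (ii) converting the non-standard second-order boundary condition \eqref{shock19} into an admissible, correctly signed oblique condition for $\phi$, which relies on the surface Poisson--Neumann reduction and on verifying that its compatibility condition $\int_E q_1=\oint_{\partial E}(\cdots)$ holds --- a fact built into the very way $q_1,q_2^{\pm},q_3^{\pm}$ are assembled from $g_2,g_3$ and $R_1$; and (iii) controlling the nonlocal terms in both the elliptic equation and the exit condition. The edge regularity at the shock--wall intersection is technically heavy but essentially routine once the compatibility conditions are in force.
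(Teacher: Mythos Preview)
Your overall architecture matches the paper's, but there is a genuine gap in how you couple $V_4$ to the elliptic step, and a secondary error in the $C^{3,\alpha}$ argument for $V_6$.

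\textbf{Main gap: the $V_4$--$V_6$ coupling must be treated implicitly.} You propose to solve \eqref{ent31} for $V_4$ with shock data $a_2\hat V_6+R_2(\hat{\mathbf V}(r_s,\cdot),\hat V_6)$, i.e.\ with the \emph{previous} iterate $\hat V_6$, and then feed this $V_4$ into the curl sources and the exit condition. But the terms $\frac{\bar B-\frac12\bar U^2}{\gamma\bar K\bar U}\,\partial V_4$ in \eqref{vor405}--\eqref{vor406} and $\frac{\bar B-\frac12\bar U^2(r_2)}{\gamma\bar K\bar U(r_2)}V_4(r_2,\cdot)$ in \eqref{pres3} carry $O(1)$ coefficients, and with your scheme $V_4^1-V_4^2$ is of size $a_2\|\hat V_6^1-\hat V_6^2\|$ at leading order. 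Pushing this through the elliptic estimate and then through $V_6=\frac{1}{a_1}V_1(r_s,\cdot)-\frac{1}{a_1}R_1$ gives a Lipschitz constant for $\hat V_6\mapsto V_6$ that is an $O(1)$ combination of $a_1,a_2$ and the elliptic solution operator, not $O(\epsilon+\delta)$; the contraction fails. The ``nonlocal term coming from \eqref{shock400}'' that you mention is exactly the cure, but it does \emph{not} arise from your scheme: one must substitute $V_6=\frac{1}{a_1}V_1(r_s,\cdot)-\frac{1}{a_1}R_1$ into the boundary data for $V_4$ \emph{before} solving the elliptic system, obtaining the representation $V_4(y)=\frac{a_2}{a_1}V_1(r_s,y')+R_4(\hat{\mathbf V},\hat V_6)$ (the paper's \eqref{ent43}). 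The principal part $\frac{a_2}{a_1}V_1(r_s,y')$ then enters the curl equations and the exit condition as the nonlocal terms $d_3(y_1)\partial V_1(r_s,y')$ and $d_3(r_2)V_1(r_s,y')$ (see \eqref{vor505}--\eqref{vor506}, \eqref{pres4}), and after the potential reduction one gets $-a_0a_1d_4(y_1)\phi(r_s,y')$ in the equation for $\phi$ (see \eqref{den41}). The uniqueness proof for this nonlocal oblique problem (Proposition~\ref{solvability}) uses the sign $d_4>0$ via a maximum-principle argument on Fourier modes; only then do all difference terms in the contraction estimate pick up a factor $O(\epsilon+\delta_0)$. In short: solve $V_5$ first, \emph{postpone} $V_4$, build the nonlocal elliptic problem for $(V_1,V_2,V_3)$, and determine $V_4$ and $V_6$ afterwards from $V_1(r_s,\cdot)$.

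\textbf{Secondary gap: the $C^{3,\alpha}$ regularity of $V_6$.} Your route ``$\psi$ solves a surface Poisson equation with $C^{1,\alpha}$ right-hand side, hence $\psi\in C^{3,\alpha}$'' does not hold: $q_1$ contains $\partial^2 R_1(\hat{\mathbf V}(r_s,\cdot),\hat V_6)$, and since $\hat{\mathbf V}\in C^{2,\alpha}$ this is only $C^{\alpha}$, so $m_1\in C^{2,\alpha}$ and $V_1(r_s,\cdot)\in C^{2,\alpha}$; \eqref{shock400} then gives $V_6\in C^{2,\alpha}$ only. The extra derivative is recovered by a different mechanism: once $(V_1,V_2,V_3)$ are in hand, one checks via Lemma~\ref{equi0} that the first-order Rankine--Hugoniot relations $\frac{1}{r_s}\partial_{y_2}V_6=a_0V_2(r_s,\cdot)+g_2$ and $\partial_{y_3}V_6=a_0V_3(r_s,\cdot)+g_3$ actually hold (the paper's \eqref{shock53}); since $V_2,V_3,g_2,g_3\in C^{2,\alpha}(\overline E)$, this yields $\nabla_{y'}V_6\in C^{2,\alpha}$ and hence $V_6\in C^{3,\alpha}(\overline E)$.
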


\section{Proof of Theorem \ref{main}}\label{proof}

We proceed to prove Theorem \ref{main}. The solution class $\mathcal{X}$ consists of the vector functions $(V_1,\cdots, V_5,V_6)\in (C^{2,\alpha}(\overline{\mathbb{D}}))^5\times C^{3,\alpha}(\overline{E})$ satisfying the estimate
\be\no
\|({\bf V},V_6)\|_{\mathcal{X}}:=\sum_{j=1}^5 \|V_j\|_{C^{2,\alpha}(\overline{\mathbb{D}})}+\|V_6\|_{C^{3,\alpha}(\overline{E})}\leq \delta_0
\ee
and the following compatibility conditions (which is precisely \eqref{shock91} and \eqref{sub52})
\be\label{class2}\begin{cases}
(V_2,\p_{y_2}^2V_2,\p_{y_2}(V_1,V_3,V_4,V_5))(y_1,\pm\theta_0,y_3)=0,\ \ &\text{on }\Sigma_2^{\pm},\\
(V_3,\p_{y_3}^2V_3,\p_{y_3}(V_1,V_2,V_4,V_5))(y_1,y_2,\pm 1)=0,\ \ &\text{on }\Sigma_3^{\pm},\\
(\p_{y_2}V_6,\p_{y_2}^3V_6)(\pm\theta_0,y_3)=0,\ \ &\text{on }y_3\in [-1,1],\\
(\p_{y_3}V_6,\p_{y_3}^3V_6)(y_2,\pm 1)=0,\ \  &\text{on } y_2\in [-\theta_0,\theta_0]
\end{cases}\ee
with $\delta_0$ being a suitably small positive constant to be determined later.

For any given $(\hat{{\bf V}},\hat{V}_6)\in \mathcal{X}$, we will define an operator $\mathcal{T}$ mapping $\mathcal{X}$ to itself, and the unique fixed point of $\mathcal{T}$ will solve the {\bf Problem TS}. Note that the principal terms in the deformation-curl system \eqref{vor404}-\eqref{vor406}, \eqref{den20} contain hyperbolic quantities $V_4$ and $V_5$, so the problem is still hyperbolic-elliptic coupled. Since $V_4$ and $V_5$ are conserved along the trajectory, one can derive some representation formulas for $V_4$ and $V_5$. Indeed, the boundary data for $V_4$ and $V_5$ involves the shock position $V_6$, using the condition \eqref{shock400}, one can check that the principal term in $V_4(y)$ is given by a scalar multiple of $V_1(r_s,y')$ and $V_5$ can be regarded as a higher order term. Substituting these into \eqref{vor404}, \eqref{vor405}, \eqref{vor406}, \eqref{den20} and \eqref{pres3}, we derive a deformation-curl first order elliptic system for $(V_1, V_2,V_3)$ containing a nonlocal term $V_1(r_s,y')$ with some boundary conditions involving only $(V_1, V_2,V_3)$ from which $(V_1, V_2,V_3)$ can be solved uniquely. Then the entropy $V_4$ and the shock front $V_6$ are uniquely determined. Now we give a detailed derivation of this procedure.

{\bf Step 1.} The shock front is uniquely determined by the following algebraic equation
\be\label{shock41}
V_6(y')=\frac{1}{a_1} V_1(r_s,y')-\frac{R_1(\hat{{\bf V}}(r_s,y'),\hat{V}_6)}{a_1},
\ee
provided that $V_1(r_s,y')$ is obtained.

{\bf Step 2.} We solve the transport equations for the Bernoulli's quantity and the entropy respectively. The Bernoulli's quantity will be determined by (See \eqref{ber31})
\be\lab{ber41}\begin{cases}
\bigg(D_1^{\hat{V}_6}+\frac{\hat{V}_2}{\bar{U}(D_0^{\hat{V}_6})+\hat{V}_1}D_2^{\hat{V}_6}+\frac{\hat{V}_3}{\bar{U}(D_0^{\hat{V}_6})+\hat{V}_1}D_3^{\hat{V}_6}\bigg)V_5=0,\\
V_5(r_s,y_2,y_3)=B^-(r_s+\hat{V}_6(y'),y')-\bar{B}^-.
\end{cases}\ee
Set
\be\label{char}\begin{cases}
K_2(y):=\frac{r_2-r_s-\hat{V}_6}{r_2-r_s}\frac{\hat{V}_2}{D_0^{\hat{V}_6}(\bar{U}(D_0^{\hat{V}_6})+\hat{V}_1)
+\frac{y_1-r_2}{r_2-r_s}(\hat{V}_2\p_{y_2}\hat{V}_6+D_0^{\hat{V}_6}\hat{V}_3\p_{y_3}\hat{V}_6)},\\
K_3(y):=\frac{r_2-r_s-\hat{V}_6}{r_2-r_s}\frac{D_0^{\hat{V}_6}\hat{V}_3}{\bar{U}(D_0^{\hat{V}_6})+\hat{V}_1
+\frac{y_1-r_2}{r_2-r_s}(\hat{V}_2\p_{y_2}\hat{V}_6+D_0^{\hat{V}_6}\hat{V}_3\p_{y_3}\hat{V}_6)}.
\end{cases}\ee
Then $K_2,K_3\in C^{2,\alpha}(\overline{\mathbb{D}})$ for any $(\hat{{\bf V}},\hat{V}_6)\in \mathcal{X}$. The function $V_5$ is conserved along the trajectory determined by the following ODE system
\begin{eqnarray}\label{char1} \left\{\begin{array}{ll}
\frac{d \bar{y}_2(\tau; y)}{d\tau}=K_2(\tau,\bar{y}_2(\tau;y),\bar{y}_3(\tau;y)),\ \ \forall \tau\in [r_s,r_2],\\
\frac{d \bar{y}_3(\tau; y)}{d\tau}=K_3(\tau,\bar{y}_2(\tau;y),\bar{y}_3(\tau;y)),\ \ \forall \tau\in [r_s,r_2],\\
\bar{y}_2(y_1; y)=y_2,\ \bar{y}_3(y_1;y)=y_3.
\end{array}\right. \end{eqnarray}
Denote $(\beta_2(y),\beta_3(y))=(\bar{y}_2(r_s;y),\bar{y}_3(r_s;y))$. Since $(\hat{{\bf V}},\hat{V}_6)\in \mathcal{X}$ satisfies the compatibility conditions \eqref{class2}, then
\be\label{char201}\begin{cases}
K_2(y_1,\pm\theta_0,y_3)=\p_{y_2} K_3(y_1,\pm\theta_0,y_3)=0,\ \ &\text{on }\Sigma_2^{\pm},\\
K_3(y_1,y_2,\pm 1)=\p_{y_3} K_2(y_1,y_2, \pm 1)=0,\ \ &\text{on }\Sigma_3^{\pm}.
\end{cases}\ee
According to the uniqueness of the solution to \eqref{char1} and \eqref{char201}, there hold
\be\label{char20}\begin{cases}
\bar{y}_2(\tau;y_1,\pm\theta_0,y_3)=\pm\theta_0,\ \ \forall \tau\in [r_s,r_2], (y_1,y_3)\in \Sigma_2^{\pm},\\
\bar{y}_3(\tau;y_1,y_2,\pm 1)=\pm 1,\ \ \forall \tau\in [r_s,r_2], (y_1,y_2)\in \Sigma_3^{\pm}
\end{cases}\ee
and
\be\label{char2}\begin{cases}
\beta_2(y_1,\pm\theta_0,y_3)=\pm\theta_0,\ \ \forall  (y_1,y_3)\in \Sigma_2^{\pm},\\
\beta_3(y_1,y_2,\pm 1)=\pm 1,\ \ \forall  (y_1,y_2)\in \Sigma_3^{\pm}.
\end{cases}\ee
The existence and uniqueness of $(\bar{y}_2(\tau;y),\bar{y}_3(\tau;y))$ on the whole interval $[r_s,r_2]$ follow from the standard theory of systems of ordinary differential equations and \eqref{char20}. Then it follows from \eqref{char1} that
\be\no
&&y_2-\beta_2(y)=\int_{r_s}^{y_1} K_2(\tau,\bar{y}_2(\tau;y),\bar{y}_3(\tau;y)) d\tau,\\\no
&&\delta_{2j}-\p_{y_j}\beta_2(y)=\delta_{1j}K_2(y)+\int_{r_s}^{y_1} \p_{y_2}K_2\p_{y_j}\bar{y}_2(\tau;y)+ \p_{y_3}K_2\p_{y_j}\bar{y}_3(\tau;y) d\tau,\ \ j=1,2,3,\\\no
&&-\p_{y_iy_j}^2\beta_2(y)=\delta_{1j}\p_{y_i}K_2(y)+\int_{r_s}^{y_1} \p_{y_2}^2K_2\p_{y_j}\bar{y}_2 \p_{y_i}\bar{y}_2 + \p_{y_2y_3}^2 K_2(\p_{y_j}\bar{y}_2\p_{y_i}\bar{y}_3+\p_{y_i}\bar{y}_2\p_{y_j}\bar{y}_3)\\\no
&&\quad\quad +\p_{y_3}^2K_2\p_{y_j}\bar{y}_3 \p_{y_i}\bar{y}_3+ \p_{y_2} K_2 \p_{y_i y_j}^2 \bar{y}_2(\tau;y)+\p_{y_3} K_2 \p_{y_i y_j}^2 \bar{y}_3(\tau;y)d\tau.
\ee
Thus there holds
\be\no
\sum_{j=2}^3\|\beta_j(y)-y_j\|_{C^{2,\alpha}(\overline{\mathbb{D}})}\leq C_*\|(\hat{{\bf V}},\hat{V}_6)\|_{\mathcal{X}}.
\ee

Differentiating the second equation in \eqref{char1} with respect to $y_2$ and restricting the resulting equation on $y_2=\pm \theta_0$, one obtains by \eqref{char201} that
\be\no\begin{cases}
\frac{d}{d\tau}\p_{y_2} \bar{y}_3(\tau;y_1,\pm\theta_0,y_3)=\p_{y_3} K_3(\tau,\bar{y}_2(\tau;y),\bar{y}_3(\tau;y))\p_{y_2}\bar{y}_3(\tau;y_1,\pm\theta_0,y_3),\\
\p_{y_2}\bar{y}_3(y_1;\pm\theta_0,y_3)=0,
\end{cases}\ee
from which one concludes that $\p_{y_2}\bar{y}_3(\tau;y_1,\pm\theta_0,y_3)=0$ for any $\tau\in [r_s,r_2]$. Similarly, one has $\p_{y_3}\bar{y}_2(\tau;y_1,y_2,\pm 1)=0$ for any $\tau\in [r_s,r_2]$. Thus
\be\label{char3}\begin{cases}
\p_{y_2}\beta_3(y_1,\pm\theta_0, y_3)=0,\ \ &\text{on }\Sigma_2^{\pm},\\
\p_{y_3}\beta_2(y_1,y_2,\pm 1)=0,\ \ &\text{on }\Sigma_3^{\pm}.
\end{cases}\ee

Since $V_5$ is conserved along the trajectory, one has
\be\no
V_5(y)=V_5(r_s,\beta_2(y),\beta_3(y))=B^-(r_s+\hat{V}_6(\beta_2(y),\beta_3(y)),\beta_2(y),\beta_3(y))-\bar{B}^-.
\ee
Thus $V_5$ can be regarded as a high order term satisfying the following estimate
\be\label{ber43}
&&\|V_5\|_{C^{2,\alpha}(\overline{\mathbb{D}})}\leq C_*\epsilon (\|\hat{V}_6\|_{C^{2,\alpha}(\overline{E})}+\sum_{j=2}^3\|\beta_j\|_{C^{2,\alpha}(\overline{\mathbb{D}})})\\\no
&&\leq C_*(\epsilon+\epsilon\|(\hat{{\bf V}}, \hat{V}_6)\|_{\mathcal{X}})\leq C_*(\epsilon+\epsilon \delta_0).
\ee
It follows from \eqref{super5},\eqref{class2} and \eqref{char3} that the following compatibility conditions hold
\be\label{ber44}\begin{cases}
\p_{y_2} V_5(y_1,\pm \theta_0, y_3)=0,\ \ \text{on }\Sigma_2^{\pm},\\
\p_{y_3} V_5(y_1,y_2,\pm 1)=0,\ \ \text{on }\Sigma_3^{\pm}.
\end{cases}\ee

The function $V_4$ satisfies
\be\no\begin{cases}
\b(D_1^{\hat{V}_6}+\frac{\hat{V}_2}{\bar{U}(D_0^{\hat{V}_6})+\hat{V}_1}D_2^{\hat{V}_6}+\frac{\hat{V}_3}{\bar{U}(D_0^{\hat{V}_6})+\hat{V}_1}D_3^{\hat{V}_6}\b)V_4=0,\\
V_4(r_s,y')= a_2 V_6(y')+R_2(\hat{{\bf V}}(r_s,y'),\hat{V}_6(y')).
\end{cases}\ee

By the characteristic method and the equation \eqref{shock41}, one has
\be\no
&&V_4(y)=V_4(r_s,\beta_2(y),\beta_3(y))\\\label{ent42}
&&=a_2 V_6(\beta_2(y),\beta_3(y))+ R_2(\hat{{\bf V}}(r_s,\beta_2(y),\beta_3(y)),\hat{V}_6(\beta_2(y),\beta_3(y)))\\\no
&&=a_2 V_6(y')+ a_2(V_6(\beta_2(y),\beta_3(y))-V_6(y'))+ R_2(\hat{{\bf V}}(r_s,\beta_2(y),\beta_3(y)),\hat{V}_6(\beta_2(y),\beta_3(y)))\\\no
&&=\frac{a_2}{a_1} V_1(r_s,y')+a_2(V_6(\beta_2(y),\beta_3(y))-V_6(y'))+R_3(\hat{{\bf V}}(r_s,\beta_2(y),\beta_3(y)),\hat{V}_6(\beta_2(y),\beta_3(y))),
\ee
where
\be\no
R_3(\hat{{\bf V}}(r_s,y'),\hat{V}_6)=R_2(\hat{{\bf V}}(r_s,y'),\hat{V}_6)-\frac{a_2 R_1(\hat{{\bf V}}(r_s,y'),\hat{V}_6)}{a_1}
\ee
Since $V_6(y')$ is still unknown, one may rewrite \eqref{ent42} as
\be\label{ent43}
V_4(y_1,y')=\frac{a_2}{a_1} V_1(r_s,y')+R_4(\hat{{\bf V}}(r_s,\beta_2(y),\beta_3(y)),\hat{V}_6(\beta_2(y),\beta_3(y))),
\ee
with
\be\no
R_4=a_2(\hat{V}_6(\beta_2(y),\beta_3(y))-\hat{V}_6(y'))+R_3(\hat{{\bf V}}(r_s,\beta_2(y),\beta_3(y)),\hat{V}_6(\beta_2(y),\beta_3(y))).
\ee
Therefore $V_4$ is decomposed as a scalar multiple of $V_1(r_s,y')$ with high order terms satisfying
\be\no
&&\|V_4\|_{C^{2,\alpha}(\overline{\mathbb{D}})}\leq C_*\|V_1(r_s,\cdot)\|_{C^{2,\alpha}(\overline{E})}+\|R_4\|_{C^{2,\alpha}(\overline{\mathbb{D}})}\\\no
&&\leq C_*(\|V_1(r_s,\cdot)\|_{C^{2,\alpha}(\overline{E})}+\|\hat{V}_6\|_{C^{3,\alpha}(\overline{E})}\sum_{j=2}^3\|\beta_j(y)-y_j\|_{C^{2,\alpha}(\overline{\mathbb{D}})})\\\no
&&\q+ C_*(\epsilon \|(\hat{{\bf V}}, \hat{V}_6)\|_{\mathcal{X}}+\|(\hat{{\bf V}}, \hat{V}_6)\|_{\mathcal{X}}^2)\leq C_*\|V_1(r_s,\cdot)\|_{C^{2,\alpha}(\overline{E})}+ C_*(\epsilon \delta_0+ \delta_0^2).
\ee
Furthermore, since $(\hat{{\bf V}}, \hat{V}_6)\in\mathcal{X}$ satisfies the compatibility conditions \eqref{class2} and the upcoming supersonic flow satisfies \eqref{super5}, using the formulas \eqref{j21}-\eqref{rv03}, one could verify by direct but tedious computations that
\be\label{j211}\begin{cases}
J_2(\hat{{\bf V}}(r_s,y'),\hat{V}_6)\}|_{y_2=\pm\theta_0}=\p_{y_2}^2\{J_2(\hat{{\bf V}}(r_s,y'),\hat{V}_6)\}|_{y_2=\pm\theta_0}=0,\ \ \\
\p_{y_2}\{J_3(\hat{{\bf V}}(r_s,y'),\hat{V}_6)\}|_{y_2=\pm\theta_0}=  \p_{y_2}\{J(\hat{{\bf V}}(r_s,y'),\hat{V}_6)\}|_{y_2=\pm\theta_0}=0,\ \\
J_3(\hat{{\bf V}}(r_s,y'),\hat{V}_6)\}|_{y_1=\pm 1}=\p_{y_3}^2\{J_3(\hat{{\bf V}}(r_s,y'),\hat{V}_6)\}|_{y_3=\pm 1}=0,\ \ \\
\p_{y_3}\{J_2(\hat{{\bf V}}(r_s,y'),\hat{V}_6)\}|_{y_3=\pm 1}=  \p_{y_3}\{J(\hat{{\bf V}}(r_s,y'),\hat{V}_6)\}|_{y_3=\pm 1}=0,\
\end{cases}\ee
and for all $j=1,2,3$
\be\label{rv11}\begin{cases}
\p_{y_2}\{ R_{0j}(\hat{{\bf V}}(r_s,y'),\hat{V}_6)\}|_{y_2=\pm\theta_0}=0,\ \ \forall y_3\in [-1,1],\\
\p_{y_3}\{ R_{0j}(\hat{{\bf V}}(r_s,y'),\hat{V}_6)\}|_{y_3=\pm 1}=0,\ \ \forall y_2\in [-\theta_0,\theta_0].
\end{cases}\ee
Thus for $k=1,2,3$
\be\label{rv41}\begin{cases}
\p_{y_2}\{ R_{k}(\hat{{\bf V}}(r_s,y'),\hat{V}_6)\}|_{y_2=\pm\theta_0}=0,\ \ \forall y_3\in [-1,1],\\
\p_{y_3}\{ R_{k}(\hat{{\bf V}}(r_s,y'),\hat{V}_6)\}|_{y_3=\pm 1}=0,\ \ \forall y_2\in [-\theta_0,\theta_0].
\end{cases}\ee
These, together with \eqref{char2} and \eqref{char3}, imply that
\be\label{rv42}
\begin{cases}
\p_{y_2}\{R_4(\hat{{\bf V}}(r_s,\beta_2(y),\beta_3(y)),\hat{V}_6(\beta_2(y),\beta_3(y)))\}(y_1,\pm \theta_0, y_3)=0,\ \ \text{on }\Sigma_2^{\pm},\\
\p_{y_3}\{R_4(\hat{{\bf V}}(r_s,\beta_2(y),\beta_3(y)),\hat{V}_6(\beta_2(y),\beta_3(y)))\}(y_1,y_2,\pm 1)=0,\ \ \text{on }\Sigma_3^{\pm}.
\end{cases}\ee
and
\be\no\begin{cases}
\p_{y_2} V_4(y_1,\pm \theta_0, y_3)=\frac{a_2}{a_1}\p_{y_2} V_1(r_s,\pm\theta_0,y_3),\ \ \text{on }\Sigma_2^{\pm},\\
\p_{y_3} V_4(y_1,y_2,\pm 1)=\frac{a_2}{a_1}\p_{y_3} V_1(r_s,y_2,\pm 1),\ \ \text{on }\Sigma_3^{\pm}.
\end{cases}\ee

\begin{remark}
{\it In the $C^{2,\alpha}(\overline{\mathbb{D}})$ estimate of the first term in $R_4$, the $C^{3,\alpha}(\overline{E})$ norm of $\hat{V}_6$ is required.
}\end{remark}

{\bf Step 3.} We solve the transport equation for the first component of the vorticity. Due to \eqref{vor400} and \eqref{vor401}, it suffices to consider the following problem
\be\label{vor501}\begin{cases}
\bigg(D_1^{\hat{V}_6}+\displaystyle\sum_{j=2}^3\frac{\hat{V}_j D_j^{\hat{V}_6}}{\bar{U}(D_0^{\hat{V}_6})+\hat{V}_1}\bigg) \tilde{{\omega}}_1 + \mu(\hat{{\bf V}},\hat{V}_6)\tilde{\omega}_1=H_0(\hat{{\bf V}},\hat{V}_6),\\
\tilde{\omega}_1(r_s,y')=R_6(\hat{{\bf V}}(r_s,y'), \hat{V}_6(y')),
\end{cases}\ee
where
\be\no
&&R_6(\hat{{\bf V}}(r_s,y'), \hat{V}_6(y'))=\frac{1}{a_0}(\p_{y_3} \{g_2(\hat{{\bf V}}(r_s,y'), \hat{V}_6(y'))\}-\frac{1}{r_s}\p_{y_2}\{g_3(\hat{{\bf V}}(r_s,y'), \hat{V}_6(y'))\})\\\no
&&\quad\quad\quad+g_4(\hat{{\bf V}}(r_s,y'), \hat{V}_6(y')).
\ee
Since $(\hat{{\bf V}},\hat{V}_6)\in \mathcal{X}$ satisfies the compatibility conditions \eqref{class2}, using the first formula in \eqref{g21},\eqref{g31} and \eqref{g4}, one can verify that
\be\label{vor5011}\begin{cases}
\tilde{\omega}_1(r_s,\pm \theta_0, y_3)=0, & \forall y_3\in [-1,1],\\
\tilde{\omega}_1(r_s,y_2,\pm 1)=0,& \forall y_2\in [-\theta_0,\theta_0],\\
H_0(\hat{{\bf V}},\hat{V}_6)(y_1,\pm\theta_0, y_3)=0,\ \ &\text{on }\Sigma_2^{\pm},\\
H_0(\hat{{\bf V}},\hat{V}_6)(y_1,y_2,\pm 1)=0,\ \ &\text{on }\Sigma_3^{\pm}.
\end{cases}\ee

Integrating the equation in \eqref{vor501} along the trajectory $(\tau,\bar{y}_2(\tau;y),\bar{y}_3(\tau;y))$ yields
\begin{eqnarray}\label{vor502}
&&\tilde{\omega}_1(y)= R_6(\beta_2(y),\beta_3(y)) e^{-\int_{r_s}^{y_1} \mu(\hat{{\bf V}},\hat{V}_6)(t;\bar{y}_2(t;y),\bar{y}_3(t;y))dt}\\\nonumber
&&\quad\quad\quad+ \int_{r_s}^{y_1} H_0(\hat{{\bf V}},\hat{V}_6)(\tau,\bar{y}_2(\tau;y),\bar{y}_3(\tau;y)) e^{-\int_{\tau}^{y_1} \mu(\hat{{\bf V}},\hat{V}_6)(t;\bar{y}_2(t;y),\bar{y}_3(t;y))dt} d\tau.
\end{eqnarray}
Thus the following estimate holds
\be\no
&&\|\tilde{\omega}_1\|_{C^{1,\alpha}(\overline{\mathbb{D}})}\leq C_*(\|\tilde{\omega}_1(r_s,\cdot)\|_{C^{1,\alpha}(\overline{E})}+\|H_0(\hat{{\bf V}},\hat{V}_6)\|_{C^{1,\alpha}(\overline{\mathbb{D}})})\\\no
&&\leq C_*(\epsilon\|(\hat{{\bf V}}, \hat{V}_6)\|_{\mathcal{X}}+\|(\hat{{\bf V}}, \hat{V}_6)\|_{\mathcal{X}}^2)\leq C_*(\epsilon\delta_0+\delta_0^2).
\ee
Also \eqref{char2},\eqref{char3}, \eqref{vor5011} and \eqref{vor502} imply the following compatibility conditions
\be\label{vor5041}\begin{cases}
\tilde{\omega}_1(y_1,\pm\theta_0, y_3)=0,\ \ &\text{on }\Sigma_2^{\pm},\\
\tilde{\omega}_1(y_1,y_2, \pm 1)=0,\ \ &\text{on }\Sigma_3^{\pm}.
\end{cases}\ee

Substituting \eqref{vor502} and \eqref{ent43} into \eqref{vor404}-\eqref{vor406} yields
\be\label{vor504}
&&\frac{1}{y_1}\p_{y_2} V_3-\p_{y_3} V_2=G_1(\hat{{\bf V}},\hat{V}_6),\\\label{vor505}
&&\p_{y_3} V_1- \p_{y_1} V_3 + d_3(y_1)\p_{y_3} V_1(r_s,y')=G_2(V_5,\hat{{\bf V}},\hat{V}_6),\\\label{vor506}
&&\p_{y_1} V_2+\frac{V_2}{y_1}-\frac{1}{y_1} \p_{y_2} V_1 - d_3(y_1) \frac{1}{y_1}\p_{y_2}V_1(r_s,y')=G_3(V_5,\hat{{\bf V}},\hat{V}_6),
\ee
where
\be\no
&&d_3(y_1)=\frac{a_2}{a_1}\frac{\bar{B}-\frac12 \bar{U}^2(y_1)}{\gamma \bar{K} \bar{U}(y_1)},\\\no
&&G_1(\hat{{\bf V}},\hat{V}_6)=\tilde{\omega}_1(y)+H_1(\hat{{\bf V}},\hat{V}_6),\\\no
&&G_2(V_5,\hat{{\bf V}},\hat{V}_6)=\frac{\hat{V}_2\tilde{\omega}_1+D_3^{\hat{V}_6} V_5}{\bar{U}(D_0^{\hat{V}_6})+\hat{V}_1}+ H_2(\hat{{\bf V}},\hat{V}_6)+ \frac{\bar{B}-\frac12 \bar{U}^2(y_1)}{\gamma \bar{K} \bar{U}(y_1)} \p_{y_3} \{R_4(\hat{{\bf V}},\hat{V}_6)\},\\\no
&&G_3(V_5,\hat{{\bf V}},\hat{V}_6)=\frac{\hat{V}_3\tilde{\omega}_1-D_2^{\hat{V}_6} V_5}{\bar{U}(D_0^{\hat{V}_6})+\hat{V}_1}+H_3(\hat{{\bf V}},\hat{V}_6)-\frac{\bar{B}-\frac12 \bar{U}^2(y_1)}{\gamma \bar{K} \bar{U}(y_1)} \frac{1}{y_1}\p_{y_2}  \{R_4(\hat{{\bf V}},\hat{V}_6)\}.
\ee

Using \eqref{class2}, \eqref{ber44}, \eqref{vor5041} and \eqref{rv42}, one can further verify the following compatibility conditions:
\be\label{cp20}\begin{cases}
G_1(\hat{{\bf V}},\hat{V}_6)|_{y_2=\pm\theta_0}= G_3(V_5,\hat{{\bf V}},\hat{V}_6)|_{y_2=\pm\theta_0}=\p_{y_2}\{G_2(V_5,\hat{{\bf V}},\hat{V}_6)\}|_{y_2=\pm\theta_0}=0,\\
G_1(\hat{{\bf V}},\hat{V}_6)|_{y_3=\pm 1}= G_2(V_5,\hat{{\bf V}},\hat{V}_6)|_{y_3=\pm 1}=\p_{y_3}\{G_3(V_5,\hat{{\bf V}},\hat{V}_6)\}|_{y_3=\pm 1}=0,
\end{cases}\ee

Furthermore, \eqref{den20} implies that
\be\label{den251}
d_1(y_1)\p_{y_1} V_1+\frac{1}{y_1}\p_{y_2} V_2 +\p_{y_3} V_3+ \frac{V_1}{y_1}+ d_2 V_1=-\frac{(\gamma-1)(\bar{U}'+\frac{\bar{U}}{y_1})}{c^2(\bar{\rho},\bar{K})} V_5+G_0
\ee

It follows from \eqref{ent43} and \eqref{pres3} that
\be\label{pres4}
V_1(r_2,y')+ d_3(r_2) V_1(r_s,y')=q_4(y'),
\ee
where
\be\no
&&q_4(y')=d_3(r_2) R_4(\hat{{\bf V}}(r_s,\beta_2(r_2,y'),\beta_3(r_2,y')),\hat{V}_6(\beta_2(r_2,y'),\beta_3(r_2,y')))+
\frac{V_5(r_2,y')}{\bar{U}(r_2)}\\\no
&&\quad\quad-\frac{\epsilon P_{ex}(y')}{(\bar{\rho}\bar{U})(r_2)}-\frac{1}{2\bar{U}(r_2)}\sum_{j=1}^3 \hat{V}_j^2(r_2,y')-\frac{1}{\bar{U}(r_2)} E(\hat{{\bf V}}(r_2,y')).
\ee
And using \eqref{pressure-cp} and the explicit expression of $E(\hat{{\bf V}}(r_2,y'))$ in \eqref{err3}, one has
\be\no\begin{cases}
\p_{y_2} q_4(\pm\theta_0, y_3)=0,\ \ \forall y_3\in [-1,1],\\
\p_{y_3} q_4(y_2, \pm 1)=0,\ \ \forall y_2\in [-\theta_0,\theta_0].
\end{cases}\ee

{\bf Step 4.} We have derived a deformation-curl system for the velocity field which consists of the equations \eqref{den251}, \eqref{vor504}-\eqref{vor506} supplemented with the boundary conditions \eqref{shock19}, \eqref{slip2}, \eqref{pres4} and \eqref{shock20}-\eqref{shock21}, where $q_1$ and $q_i^{\pm} (i=2,3)$ are evaluated at $(\hat{{\bf V}},\hat{V}_6)$. However, due to the linearization, the vector field $(G_1,G_2,G_3)(\hat{{\bf V}},\hat{V}_6)$ may not be divergence free and thus the solvability condition of the curl system \eqref{vor504}-\eqref{vor506} does not hold in general. To overcome this obstacle, we first consider the following enlarged deformation-curl system, which includes an additional new unknown function $\Pi$ with homogeneous Dirichlet boundary condition for $\Pi$:
\be\label{den32}\begin{cases}
d_1\p_{y_1} V_1+\frac{1}{y_1}\p_{y_2} V_2 +\p_{y_3} V_3+ \frac{V_1}{y_1}+ d_2 V_1=-\frac{(\gamma-1)(\bar{U}'+\frac{\bar{U}}{y_1})}{c^2(\bar{\rho},\bar{K})} V_5+G_0(\hat{{\bf V}},\hat{V}_6),\text{in }\mathbb{D},\\
\frac{1}{y_1}\p_{y_2} V_3- \p_{y_3} V_2+\p_{y_1} \Pi=G_1(\hat{{\bf V}},\hat{V}_6),\ \text{in }\mathbb{D},\\
\p_{y_3} V_1-\p_{y_1} V_3+d_3(y_1)\p_{y_3} V_1(r_s,y')+ \frac1{y_1}\p_{y_2}\Pi= G_2(V_5,\hat{{\bf V}},\hat{V}_6),\ \ \text{in }\mathbb{D},\\
\p_{y_1} V_2-\frac{1}{y_1}\p_{y_2} V_1+\f{V_2}{y_1}-\frac{d_3(y_1)}{y_1}\p_{y_2} V_1(r_s,y')+\p_{y_3}\Pi = G_3(V_5,\hat{{\bf V}},\hat{V}_6),\ \ \text{in }\mathbb{D},\\
(\frac{1}{r_s^2}\p_{y_2}^2+\p_{y_3}^2) V_1(r_s,y')=a_0 a_1 (\frac{1}{r_s}\p_{y_2} V_2+\p_{y_3}V_3)(r_s,y')+q_1(\hat{{\bf V}}(r_s,y'),\hat{V}_6),\\
V_2(y_1,\pm\theta_0,y_3)=\Pi(y_1,\pm \theta_0, y_3)=0,\ \ \text{on }\Sigma_2^{\pm},\\
\Pi(r_s,y')=\Pi(r_2,y')=0,\ \ \forall y'\in E,\\
V_3(y_1,y_2,\pm 1)=\Pi(y_1,y_2,\pm 1)= 0,\ \ \text{on }\Sigma_3^{\pm}, \\
V_1(r_2,y')+ d_3(r_2) V_1(r_s,y')=q_4(y'),\ \ \forall y'\in E.
\end{cases}\ee

The system \eqref{den32} should be supplemented with the boundary conditions \eqref{shock20}-\eqref{shock21} where $q_i^{\pm} (i=2,3)$ are evaluated at $(\hat{{\bf V}},\hat{V}_6)$ so that a unique solvability result can be derived. Thanks to \eqref{j211}, \eqref{rv41}, \eqref{g21} and \eqref{g31}, it follows from \eqref{shock20}-\eqref{shock21} and the compatibility condition \eqref{class2} that on the intersection of the shock front with the nozzle wall:
\be\label{shock22}
&&\left(\frac{1}{r_s}\p_{y_2}V_1-a_0a_1V_2\right)(r_s,\pm \theta_0,y_3)=0,\ \forall y_3\in [-1,1],\\\label{shock23}
&&(\p_{y_3} V_1-a_0 a_1 V_3)(r_s,y_2,\pm 1)= 0,\ \ \forall y_2\in [-\theta_0,\theta_0].
\ee

The unique solvability of the problem \eqref{den32} with \eqref{shock22}-\eqref{shock23} can be verified by several steps using the Duhamel's principle as follows.

{\bf Step 4.1} First, taking the divergence operator for the second, third and fourth equations in \eqref{den32} leads to
\be\label{den33}\begin{cases}
\p_{y_1}^2 \Pi +\frac{1}{y_1}\p_{y_1}\Pi+ \frac{1}{y_1^2}\p_{y_2}^2\Pi + \p_{y_3}^2 \Pi= \p_{y_1} G_1 +\frac{G_1}{y_1} + \frac{1}{y_1}\p_{y_2} G_2 + \p_{y_3}G_3,\ \ \text{in }\mathbb{D},\\
\Pi(r_s,y')=\Pi(r_2,y')=0,\ \ \forall y'\in E,\\
\Pi(y_1,\pm \theta_0,y_3)=0,\ \ \text{on }\Sigma_2^{\pm},\\
\Pi(y_1,y_2,\pm 1)= 0\ \ \text{on }\Sigma_3^{\pm}.
\end{cases}\ee

The existence and uniqueness of $C^{2,\alpha}(\mathbb{D})\cap C^{1,\alpha}(\overline{\mathbb{D}})$ smooth solution $\Pi$ to \eqref{den33} can be found in \cite{gt98}. To deal with the singularity near the corner, one can use the standard symmetric extension technique to extend $\Pi, G_1, G_2, G_3$ as follows:
\be\no
(\tilde{\Pi},\tilde{G}_1,\tilde{G}_3)(\vec{y})=\begin{cases}
(\Pi,G_1,G_3)(y_1,y_2,y_3),\ \ \ \vec{y}\in \overline{\mathbb{D}},\\
-(\Pi,G_1,G_3)(y_1,2\theta_0-y_2,y_3),\ \vec{y}\in [r_s,r_2]\times [\theta_0,3\theta_0]\times [-1,1],\\
-(\Pi,G_1,G_3)(y_1,-2\theta_0-y_2,y_3),\ \vec{y}\in [r_s,r_2]\times [-3\theta_0,-\theta_0]\times [-1,1],
\end{cases}\ee
and
\be\label{ext2}
\tilde{G}_2(\vec{y})=\begin{cases}
G_2(y_1,y_2,y_3),\ \ &\vec{y}\in \overline{\mathbb{D}},\\
G_2(y_1,2\theta_0-y_2,y_3),\ &\vec{y}\in [r_s,r_2]\times [\theta_0,3\theta_0]\times [-1,1],\\
G_2(y_1,-2\theta_0-y_2,y_3),\ &\vec{y}\in [r_s,r_2]\times [-3\theta_0,-\theta_0]\times [-1,1],
\end{cases}\ee
The extension of $\Pi, G_i,i=1,2,3$ along the $x_3$ direction can be done similarly.

Thanks to the compatibility conditions \eqref{cp20}, one has
\be\no\begin{cases}
\p_{y_1}^2 \tilde{\Pi} +\frac{1}{y_1}\p_{y_1}\tilde{\Pi}+ \frac{1}{y_1^2}\p_{y_2}^2\tilde{\Pi} + \p_{y_3}^2 \tilde{\Pi}= \p_{y_1} \tilde{G}_1 +\frac{\tilde{G}_1}{y_1} + \frac{1}{y_1}\p_{y_2} \tilde{G}_2 + \p_{y_3}\tilde{G}_3\in C^{1,\alpha}(\mathbb{D}_e),\\
\tilde{\Pi}(r_s,y_2,y_3)=\tilde{\Pi}(r_2,y_2,y_3)=0,\ \ \forall (y_2,y_3)\in [-3\theta_0,3\theta_0]\times [-3,3],
\end{cases}\ee
where $\mathbb{D}_e=(r_s,r_2)\times (-3\theta_0,3\theta_0)\times (-3,3)$. Then the regularity of $\Pi$ on $\mathbb{D}$ can be improved to be $C^{2,\alpha}(\overline{\mathbb{D}})$ with the estimate
\be\no
&&\|\Pi\|_{C^{2,\alpha}(\overline{\mathbb{D}})}\leq C_*\sum_{j=1}^3\|G_j\|_{C^{1,\alpha}(\overline{\mathbb{D}})}\leq C_*(\epsilon\|(\hat{{\bf V}}, \hat{V}_6)\|_{\mathcal{X}}+\|(\hat{{\bf V}}, \hat{V}_6)\|_{\mathcal{X}}^2)\leq C_*(\epsilon \delta_0 +\delta_0^2).
\ee
Furthermore, the following compatibility conditions hold
\be\label{cp331}\begin{cases}
\p_{y_1}\Pi(y_1,\pm\theta_0,y_3)=\p_{y_3}\Pi(y_1,\pm\theta_0,y_3)=0,\ \ &\text{on }\Sigma_2^{\pm},\\
\p_{y_1}\Pi(y_1,y_2,\pm 1)=\p_{y_2}\Pi(y_1,y_2,\pm 1)=0,\ \ &\text{on }\Sigma_3^{\pm}.
\end{cases}\ee

{\bf Step 4.2} Next we are going to solve the following divergence-curl system with homogeneous normal boundary conditions
\be\label{den34}\begin{cases}
\p_{y_1} \dot{V}_1+ \frac{\dot{V}_1}{y_1}+\frac{1}{y_1}\p_{y_2} \dot{V}_2 + \p_{y_3}\dot{V}_3=0,\ \ &\text{in }\mathbb{D},\\
\frac{1}{y_1}\p_{y_2} \dot{V}_3- \p_{y_3} \dot{V}_2=G_1(\hat{{\bf V}},\hat{V}_6)-\p_{y_1}\Pi:=\tilde{G}_1,\ \ &\text{in }\mathbb{D},\\
\p_{y_3} \dot{V}_1-\p_{y_1} \dot{V}_3= G_2(V_5,\hat{{\bf V}},\hat{V}_6)-\frac{1}{y_1}\p_{y_2}\Pi:=\tilde{G}_2,\ \ &\text{in }\mathbb{D},\\
\p_{y_1} \dot{V}_2-\frac{1}{y_1}\p_{y_2} \dot{V}_1+\frac{\dot{V}_2}{y_1}= G_3(V_5,\hat{{\bf V}},\hat{V}_6)-\p_{y_3}\Pi:=\tilde{G}_3,\ \ &\text{in }\mathbb{D},\\
\dot{V}_1(r_s,y_2,y_3)= \dot{V}_1(r_2,y_2,y_3)=0,\ \ &\forall y'\in E,\\
\dot{V}_2(y_1,\pm\theta_0,y_3)=0,\ \ &\text{on }\Sigma_2^{\pm},\\
\dot{V}_3(y_1,y_2,\pm 1)=0\ \ &\text{on }\Sigma_3^{\pm}.
\end{cases}\ee

Since $\Pi$ satisfies the equation in \eqref{den33}, then
\be\no
\p_{y_1} \tilde{G}_1+\frac{1}{y_1} \tilde{G}_1+ \frac{1}{y_1}\p_{y_2} \tilde{G}_2+\p_{y_3} \tilde{G}_3\equiv 0,\ \text{in }\mathbb{D}.
\ee
Also it follows from \eqref{cp20} and \eqref{cp331} that
\be\label{cp340}\begin{cases}
\tilde{G}_1(y_1,\pm\theta_0,y_3)=\tilde{G}_3(y_1,\pm\theta_0,y_3)=\p_{y_2}\tilde{G}_2(y_1,\pm\theta_0,y_3)=0,\ \ &\text{on }\Sigma_2^{\pm},\\
\tilde{G}_1(y_1,y_2,\pm 1)=\tilde{G}_2(y_1,y_2,\pm 1)==\p_{y_3}\tilde{G}_3(y_1,y_2,\pm 1)=0,\ \ &\text{on }\Sigma_3^{\pm}.
\end{cases}
\ee

The unique solvability of the divergence-curl system with the homogeneous normal boundary condition is well-known (cf. \cite{ky09} and the references therein). By the compatibility condition \eqref{cp340} and the symmetric extension technique as above, there exists a unique $C^{2,\alpha}(\overline{\mathbb{D}})$ smooth vector field $(\dot{V}_1,\dot{V}_2,\dot{V}_3)$ solving \eqref{den34} with the estimate
\be\no
&&\sum_{j=1}^3\|\dot{V}_j\|_{C^{2,\alpha}(\overline{\mathbb{D}})}\leq C_*\sum_{j=1}^3\|\tilde{G}_j\|_{C^{1,\alpha}(\overline{\mathbb{D}})}\leq C_*\sum_{j=1}^3\|G_j\|_{C^{1,\alpha}(\overline{\mathbb{D}})}+
\|\Pi\|_{C^{2,\alpha}(\overline{D})}\\\no
&&\leq C_*\sum_{j=1}^3\|G_j\|_{C^{1,\alpha}(\overline{\mathbb{D}})}\leq C_*(\epsilon\|(\hat{{\bf V}}, \hat{V}_6)\|_{\mathcal{X}}+\|(\hat{{\bf V}}, \hat{V}_6)\|_{\mathcal{X}}^2)\leq C_*(\epsilon \delta_0 +\delta_0^2)
\ee
and the following compatibility conditions hold
\be\label{den342}\begin{cases}
\p_{y_2} (\dot{V}_1,\dot{V}_3)(y_1,\pm\theta_0,y_3)=(\dot{V}_2,\p_{y_2}^2 \dot{V}_2)(y_1,\pm\theta_0,y_3)=0, \ &\text{on }\Sigma_2^{\pm},\\
\p_{y_3} (\dot{V}_1,\dot{V}_2)(y_1,y_2,\pm 1)=(\dot{V}_3,\p_{y_3}^2 \dot{V}_3)(y_1,y_2, \pm 1)=0, \ &\text{on }\Sigma_3^{\pm}.
\end{cases}\ee

{\bf Step 4.3} Let $(V_1,V_2,V_3)$ be the solution to \eqref{den32}, and set
\be\no
N_j(y)= V_j(y)- \dot{V}_j(y), j=1,2,3.
\ee
Then $N_j, j=1,2,3$ solve the following problem
\be\label{den36}\begin{cases}
d_1\p_{y_1} N_1+\frac{1}{y_1}\p_{y_2} N_2 +\p_{y_3} N_3 +\frac{1}{y_1} N_1+d_2 N_1=G_4(\hat{{\bf V}},\hat{V}_6),\ \ \text{in }\mathbb{D},\\
\frac{1}{y_1}\p_{y_2} N_3-\p_{y_3} N_2=0,\ \ \text{in }\mathbb{D},\\
\p_{y_3} \left(N_1+d_3(y_1) N_1(r_s,y')\right)-\p_{y_1} N_3= 0,\ \ \text{in }\mathbb{D},\\
\p_{y_1} N_2+\frac{N_2}{y_1}-\frac{1}{y_1}\p_{y_2} \left(N_1+d_3(y_1) N_1(r_s,y')\right)= 0,\ \ \text{in }\mathbb{D},\\
(\frac{1}{r_s^2}\p_{y_2}^2+\p_{y_3}^2) N_1(r_s,y')-a_0 a_1 (\frac{1}{r_s}\p_{y_2} N_2+\p_{y_3}N_3)(r_s,y')=q_5(y'),\ \ \forall y'\in E,\\
N_2(y_1,\pm \theta_0,y_3)=0,\ \ \text{on }\Sigma_2^{\pm},\\
N_3(y_1,y_2,\pm 1)=0,\ \ \text{on }\Sigma_3^{\pm},\\
N_1(r_2,y')+ d_3(r_2) N_1(r_s,y')=q_4(\hat{{\bf V}}(r_s,y'),\hat{V}_6(y')), \ \forall y'\in E,
\end{cases}\ee
where
\be\no
&&G_4(\hat{{\bf V}},\hat{V}_6)=-\frac{(\gamma-1)(\bar{U}'+\frac{\bar{U}}{r})}{c^2(\bar{\rho},\bar{K})} V_5+G_0(\hat{{\bf V}},\hat{V}_6)+\bar{M}^2(y_1)\p_{y_1}\dot{V}_1-d_2(y_1)\dot{V}_1,\\\no
&&q_5(\hat{{\bf V}}(r_s,y'),\hat{V}_6(y'))=q_1(\hat{{\bf V}}(r_s,y'),\hat{V}_6(y'))+a_0a_1(\frac{1}{r_s}\p_{y_2}\dot{V}_2+\p_{y_3} \dot{V}_3)(r_s,y').
\ee

The boundary conditions \eqref{shock22}, \eqref{shock23} and \eqref{den342} imply that
\be\label{shock24}
&&\left(\frac{1}{r_s}\p_{y_2}N_1-a_0a_1N_2\right)(r_s,\pm \theta_0,y_3)=0,\ \forall y_3\in [-1,1],\\\label{shock25}
&&(\p_{y_3} N_1-a_0 a_1 N_3)(r_s,y_2,\pm 1)= 0,\ \ \forall y_2\in [-\theta_0,\theta_0].
\ee

It follows from the second, third and fourth equations in \eqref{den36} that there exists a potential function $\phi$ such that
\be\no
N_1(y_1,y') + d_3(y_1) N_1(r_s,y')=\p_{y_1}\phi(y_1,y'),\, N_2=\frac{1}{y_1}\p_{y_2}\phi, \ N_3=\p_{y_3}\phi.
\ee
Therefore
\be\no
&&N_1(r_s,y')=\frac 1{a_3}\p_{y_1}\phi(r_s,y'),\\\no
&&N_1(y_1,y')=\p_{y_1}\phi(y_1,y')- \frac{1}{a_3} d_3(y_1)\p_{y_1}\phi(r_s,y')
\ee
with
\be\no
a_3=\frac{(\gamma-1) \bar{M}^2(r_s)+1}{\gamma \bar{M}^2(r_s)}>0.
\ee
Thus the problem \eqref{den36} is equivalent to
\be\label{den37}\begin{cases}
d_1(y_1) \p_{y_1}^2 \phi+\frac{1}{y_1^2}\p_{y_2}^2 \phi +\p_{y_3}^2 \phi +(\frac{1}{y_1}+ d_2(y_1))\p_{y_1}\phi\\
\q\q-\frac{1}{a_3} d_4(y_1)\p_{y_1}\phi(r_s,y')=G_4(\hat{{\bf V}},\hat{V}_6),\ \ &\text{in }\mathbb{D},\\
(\frac{1}{r_s^2}\p_{y_2}^2+\p_{y_3}^2)\b(\p_{y_1}\phi(r_s,y_2,y_3)-a_4 \phi(r_s,y')\b)=a_3q_5(y'),\ \ &\forall y'\in E,\\
\p_{y_2}\phi(y_1,\pm \theta_0,y_3)=0,\ \ &\text{on }\Sigma_2^{\pm},\\
\p_{y_3}\phi(y_1,y_2,\pm 1)= 0,\ \ \ \ &\text{on }\Sigma_3^{\pm},\\
\p_{y_1}\phi(r_2,y')=q_4(y'),\ \ &\forall y'\in E,
\end{cases}\ee
where
\be\no
&&d_4(y_1)=d_1(y_1)d_3'(y_1)+(\frac{1}{y_1}+d_2(y_1)) d_3(y_1)\\\no
&&\quad=- \frac{a_2}{a_1}d_1(y_1)\frac{\bar{B}+\frac{1}{2}\bar{U}^2(y_1)}{\gamma\bar{K}}\frac{\bar{U}'(y_1)}{\bar{U}(y_1)}+(\frac{1}{y_1}+d_2(y_1)) d_3(y_1)\\\no
&&\quad=\frac{a_2}{a_1\gamma\bar{K}y_1\bar{U}(y_1)}\left(\bar{B}+\frac{1}{2}\bar{U}^2(y_1)+(\bar{B}-\frac{1}{2}\bar{U}^2(y_1))\left(1+\frac{\bar{M}^2(2+(\gamma-1)\bar{M}^2)}{1-\bar{M}^2}\right)\right)\\\no
&&\quad=\frac{a_2}{\gamma a_1\bar{K} y_1 \bar{U}(y_1)}\left(2\bar{B}+\frac{\bar{U}^2(y_1)(2+(\gamma-1)\bar{M}^2(y_1))}{(\gamma-1)(1-\bar{M}^2(y_1))}\right)>0,\\\no
&& a_4=a_0 a_1 a_3>0.
\ee

Moreover, the boundary conditions \eqref{shock24} and \eqref{shock25} can be rewritten as
\be\label{shock26}
&&\frac{1}{r_s}\p_{y_2}(\p_{y_1}\phi-a_4 \phi)(r_s,\pm \theta_0,y_3)=0,\ \forall y_3\in [-1,1],\\\label{shock27}
&&\p_{y_3} (\p_{y_1}\phi-a_4\phi)(r_s,y_2,\pm 1)=0,\ \ \forall y_2\in [-\theta_0,\theta_0].
\ee

A key issue here is the derivation of the oblique boundary condition for the potential $\phi$ on the boundary $\{(r_s,y'): y'\in E\}$ by solving the Poisson equation (the first boundary condition in \eqref{den37}) with the Neumann boundary conditions \eqref{shock26}-\eqref{shock27}.
\begin{lemma}\label{oblique}({\bf The oblique boundary condition on the shock front.})
On the shock front $\{(r_s, y'): y'\in E\}$, there exists a unique $C^{2,\alpha}(\overline{E})$ function $m_1(y')$ such that
\be\no
\p_{y_1}\phi(r_s,y')-a_4 \phi(r_s,y')= m_1(y')
\ee
where $m_1(y')$ satisfies the Poisson equation with the homogeneous Neumann boundary conditions
\be\label{den38}\begin{cases}
(\frac{1}{r_s^2}\p_{y_2}^2+\p_{y_3}^2)m_1(y')=a_3q_5(\hat{{\bf V}}(r_s,y'),\hat{V}_6(y')),\ \ &\text{in } E:=(-\theta_0,\theta_0)\times (-1,1),\\
\frac{1}{r_s}\p_{y_2}m_1(\pm \theta_0,y_3)=0,\ &\forall y_3\in [-1,1],\\
\p_{y_3} m_1(y_2,\pm 1)=0,\ \ &\forall y_2\in [-\theta_0,\theta_0],
\end{cases}\ee
and the condition
\be\label{den39}
\iint_{E} m_1(y_2,y_3) dy'=0.
\ee
\end{lemma}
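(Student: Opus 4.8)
The plan is to read the asserted relation $\p_{y_1}\phi(r_s,y')-a_4\phi(r_s,y')=m_1(y')$ as the first order (oblique) boundary condition on the shock front that will replace the second order shock front condition in the second line of \eqref{den37} together with the tangential Neumann conditions \eqref{shock26}--\eqref{shock27}. Indeed, writing $w(y'):=\p_{y_1}\phi(r_s,y')-a_4\phi(r_s,y')$, these three relations assert exactly that $w$ solves, on the rectangle $E$, the Neumann problem for $\frac{1}{r_s^2}\p_{y_2}^2+\p_{y_3}^2$ with data $a_3 q_5$, i.e. \eqref{den38} (without the normalization \eqref{den39}). So the lemma splits into: (i) the compatibility condition $\iint_E q_5\,dy'=0$ guaranteeing solvability; (ii) $C^{2,\alpha}(\overline E)$ regularity up to the corners of $E$; (iii) pinning down the otherwise free additive constant, via the normalization \eqref{den39} together with the gauge freedom in $\phi$. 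This is the exact analogue, at the level of the potential, of the mechanism in Lemma \ref{equi0} by which $F_2=F_3\equiv0$ (the reformulated Rankine--Hugoniot conditions \eqref{shock12}) was recast as the div--curl system \eqref{equi00} with normal data.

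The main point, and the only genuinely delicate step, is the \emph{solvability identity} $\iint_E q_5\,dy'=0$. I would prove it by the divergence theorem: recalling $q_5=q_1+a_0a_1\big(\tfrac{1}{r_s}\p_{y_2}\dot V_2+\p_{y_3}\dot V_3\big)(r_s,\cdot)$ with $q_1=a_1\big(\tfrac{1}{r_s}\p_{y_2}g_2+\p_{y_3}g_3\big)+\big(\tfrac{1}{r_s^2}\p_{y_2}^2+\p_{y_3}^2\big)R_1$, every summand is a divergence in $y'$, so integrating over $E$ produces boundary integrals over $\partial E$, and these vanish term by term: $\dot V_2=0$ on $\Sigma_2^{\pm}$ and $\dot V_3=0$ on $\Sigma_3^{\pm}$ by \eqref{den34}; $g_2(\pm\theta_0,y_3)=0$ and $g_3(y_2,\pm1)=0$ by the explicit formulas \eqref{g21}--\eqref{g31} together with $J_2|_{y_2=\pm\theta_0}=0=J_3|_{y_3=\pm1}$ from \eqref{j211} and $V_2|_{y_2=\pm\theta_0}=0=V_3|_{y_3=\pm1}$ from \eqref{class2}; and $\p_{y_2}R_1|_{y_2=\pm\theta_0}=0=\p_{y_3}R_1|_{y_3=\pm1}$ by \eqref{rv41}. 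Hence $\iint_E q_5=0$. In other words, the particular form of the shock front equation in \eqref{den37} is precisely what makes this Neumann problem solvable; but verifying the identity requires combining all of \eqref{j211}, \eqref{class2}, \eqref{rv41} with the normal boundary conditions in \eqref{den34}, so I expect this to be the main obstacle.

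With solvability in hand, and since $q_5\in C^{0,\alpha}(\overline E)$ — the functions entering $q_1$ are $C^{1,\alpha}$ or better except for $\p^2 R_1$, and $R_1\in C^{2,\alpha}(\overline E)$ so $\p^2 R_1\in C^{0,\alpha}$, while the traces $\p_{y_2}\dot V_2(r_s,\cdot),\p_{y_3}\dot V_3(r_s,\cdot)$ lie in $C^{1,\alpha}(\overline E)$ — the Neumann problem \eqref{den38} has a solution, unique modulo constants, and \eqref{den39} singles out a unique $m_1$. For $C^{2,\alpha}$ regularity at the four (right angle) corners of $E$, I would extend $q_5$ and $m_1$ by even reflection across $\{y_2=\pm\theta_0\}$ and $\{y_3=\pm1\}$ as in \eqref{ext2}; the extended $q_5$ is $C^{0,\alpha}$ on the doubled rectangle, the extended $m_1$ is $C^1$ across the reflection lines (since $\p_{y_2}m_1,\p_{y_3}m_1$ vanish there) and is a weak solution of the Poisson equation there, so interior Schauder estimates give $m_1\in C^{2,\alpha}$ near each corner, hence on $\overline E$. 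Finally, to see that the trace $w$ of a genuine $\phi$ equals this particular $m_1$ rather than $m_1+c$, note that the full problem \eqref{den37}, \eqref{shock26}--\eqref{shock27} is invariant under $\phi\mapsto\phi+\mathrm{const}$ (the trace $\phi(r_s,\cdot)$ appears only inside $w$, always differentiated tangentially, and $\phi$ enters the interior equation and the remaining boundary conditions only through its derivatives), so replacing $\phi$ by $\phi+c/a_4$ normalizes $c=0$; equivalently, imposing $\p_{y_1}\phi(r_s,\cdot)-a_4\phi(r_s,\cdot)=m_1$ with the zero-mean $m_1$ simultaneously supplies the oblique boundary condition and fixes the additive gauge of $\phi$.
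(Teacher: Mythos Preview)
Your proposal is correct and follows essentially the same approach as the paper: you verify the solvability condition $\iint_E q_5\,dy'=0$ by writing each term of $q_5$ as a tangential divergence and using the compatibility conditions \eqref{j211}, \eqref{rv41}, \eqref{class2}, and the normal boundary data for $\dot V_2,\dot V_3$ from \eqref{den34}, then obtain $C^{2,\alpha}$ regularity up to the corners by even reflection and interior Schauder estimates. Your additional discussion of the gauge freedom $\phi\mapsto\phi+\mathrm{const}$ to justify the zero-mean normalization \eqref{den39} is a nice clarification that the paper leaves implicit (it is absorbed into the subsequent well-posedness of \eqref{den41}).
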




\begin{proof}

Due to \eqref{g21},\eqref{g31},\eqref{class2},\eqref{rv11} and \eqref{den342}, the following solvability condition for \eqref{den38} holds
\be\no
&&\iint_{E} q_5(\hat{{\bf V}}(r_s,y'),\hat{V}_6(y')) dy'=\iint_{E} q_1(\hat{{\bf V}}(r_s,y'),\hat{V}_6(y')) dy'\\\no
&&=\int_{-1}^1\frac{a_1}{r_s}g_2(\hat{{\bf V}}(r_s,y'), \hat{V}_6(y'))+\frac{1}{r_s^2} \p_{y_2} \{R_1(\hat{{\bf V}}(r_s,y'), \hat{V}_6(y'))\}\bigg|_{y_2=-\theta_0}^{\theta_0} dy_3\\\no
&&\quad+ \int_{-\theta_0}^{\theta_0}a_1g_3(\hat{{\bf V}}(r_s,y'), \hat{V}_6(y'))+\p_{y_3}\{R_1(\hat{{\bf V}}(r_s,y'), \hat{V}_6(y'))\}\bigg|_{y_3=-1}^{1} dy_2=0.
\ee
Thanks to the homogeneous Neumann boundary conditions in \eqref{den38}, by the symmetric extension technique, one can get that there exists a unique solution $m_1(y')\in C^{2,\alpha}(\overline{E})$ to \eqref{den38} satisfying \eqref{den39} with
\be\no
&&\|m_1\|_{C^{2,\alpha}(\overline{E})}\leq C_*\|q_5(\hat{{\bf V}}(r_s,y'),\hat{V}_6(y'))\|_{C^{\alpha}(\overline{E})}\\\no
&&\leq C_*(\|R_1(y')\|_{C^{2,\alpha}(\overline{E})}+\sum_{j=2}^3\|\dot{V}_j(r_s,\cdot)\|_{C^{1,\alpha}(\overline{E})}+\|g_j\|_{C^{1,\alpha}(\overline{E})})\\\no
&&\leq C_*(\epsilon\|(\hat{{\bf V}}, \hat{V}_6)\|_{\mathcal{X}}+\|(\hat{{\bf V}}, \hat{V}_6)\|_{\mathcal{X}}^2)\leq C_*(\epsilon \delta_0 +\delta_0^2).
\ee

\end{proof}

Then the problem, \eqref{den37} and \eqref{shock26}-\eqref{shock27}, can be reduced to
\be\label{den41}\begin{cases}
\p_{y_1}(d_1\p_{y_1} \phi)+\frac{1}{y_1^2}\p_{y_2}^2 \phi +\p_{y_3}^2 \phi + d_5\p_{y_1}\phi-a_0a_1 d_4\phi(r_s,y')=G_5(y),\ \ \text{in }\mathbb{D},\\
\p_{y_1}\phi(r_s,y')-a_4 \phi(r_s,y')=m_1(y'),\ \ \forall y'\in E,\\
\p_{y_2}\phi(y_1,\pm \theta_0,y_3)=0,\ \ \text{on }\Sigma_2^{\pm},\\
\p_{y_3}\phi(y_1,y_2,\pm 1)= 0,\ \ \ \text{on }\Sigma_3^{\pm}, \\
\p_{y_1}\phi(r_2,y')=m_2(y'),\ \ \forall y'\in E,
\end{cases}\ee
where
\be\no
&&G_5(y)=G_4(y)+ \frac{d_4(y_1)}{a_3} m_1(y'),\\\no
&&d_5(y_1)=\frac{1}{y_1}+d_2(y_1)-d_1'(y_1),\ \ m_2(y')=q_4(\hat{{\bf V}}(r_s,y'),\hat{V}_6(y')).
\ee
It can be checked easily that the function $G_5$ and $m_i, i=1,2$ satisfy the following compatibility conditions
\be\label{den415}\begin{cases}
\p_{y_2}G_5(y_1,\pm\theta_0, y_3)=0,\ &\text{on }\Sigma_2^{\pm},\\
\p_{y_3}G_5(y_1,y_2, \pm 1)=0,\ &\text{on }\Sigma_3^{\pm},\\
\p_{y_2} m_1(\pm \theta_0,y_3)=\p_{y_2} m_2(\pm \theta_0,y_3)=0, \ &\forall y_3\in [-1,1],\\
\p_{y_3} m_1(y_2,\pm 1)=\p_{y_3} m_2(y_2, \pm 1)=0, \ &\forall y_2\in [-\theta_0,\theta_0].
\end{cases}\ee

Note that in \eqref{den41} we have replaced the term $\frac{d_4(y_1)}{a_3}\p_{y_1}\phi(r_s,y')$ in the first equation of \eqref{den37} by $a_0a_1d_4(y_1)\phi(r_s,y')$ using the oblique boundary condition at $y_1=r_s$. It simplifies greatly the unique solvability of the problem \eqref{den41} since the nonlocal term involves only the trace $\phi(r_s,y')$ not the derivative $\p_{y_1}\phi(r_s,y')$, so that one can use the Lax-Milgram theorem and the Fredholm alternatives for second order elliptic equations to establish the existence and uniqueness of the solution to \eqref{den41}.

Indeed, first, the weak solution to \eqref{den41} can be obtained as follows. $\phi\in H^1(\mathbb{D})$ is said to be a weak solution to \eqref{den41}, if for any $\psi\in H^1(\mathbb{D})$, the following equality holds
\be\label{den411}
\mathcal{B}(\phi,\psi)= \mathcal{L}(\psi),\ \ \forall \psi\in H^1(\mathbb{D}),
\ee
where
\be\no
&&\mathcal{B}(\phi,\psi)=\iiint_{\mathbb{D}} d_1(y_1)\p_{y_1}\phi \p_{y_1}\psi + \frac{1}{y_1^2}\p_{y_2}\phi \p_{y_2}\psi + \p_{y_3}\phi \p_{y_3}\psi\\\no
&&- d_5(y_1) \p_{y_1}\phi \psi + a_0a_1 d_4(y_1)\phi(r_s,y')\psi(y_1,y')dy_1 dy' + \iint_{E} d_1(r_s)a_4 \phi(r_s,y')\psi(r_s,y') dy',\\\no
&&\mathcal{L}(\psi)= -\iiint_{\mathbb{D}} \psi G_5 dy_1 dy'+ \iint_{E} d_1(r_2) m_2(y')\psi(r_2,y') -d_1(r_s) m_1(y')\psi(r_s,y') dy'.
\ee
Next, to solve \eqref{den411}, one observes that
\begin{lemma}\label{weak}
{\it There exists a positive constant $K$ depending only on the background solution such that the following problem has a unique weak solution in $H^1(\mathbb{D})$
\be\label{den420}\begin{cases}
\p_{y_1}(d_1\p_{y_1} \phi)+\frac{1}{y_1^2}\p_{y_2}^2 \phi +\p_{y_3}^2 \phi + d_5\p_{y_1}\phi-a_0a_1 d_4\phi(r_s,y')-K\phi=G_5(y),\ \ \text{in }\mathbb{D},\\
\p_{y_1}\phi(r_s,y')-a_4 \phi(r_s,y')=m_1(y'),\ \ \forall y'\in E,\\
\p_{y_2}\phi(y_1,\pm \theta_0,y_3)=0,\ \ \text{on }\Sigma_2^{\pm},\\
\p_{y_3}\phi(y_1,y_2,\pm 1)= 0,\ \ \ \text{on }\Sigma_3^{\pm},\\
\p_{y_1}\phi(r_2,y')=m_2(y'),\ \ \forall y'\in E.
\end{cases}\ee
}\end{lemma}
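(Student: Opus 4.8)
The plan is to obtain the weak solution of \eqref{den420} by applying the (non-symmetric) Lax--Milgram theorem to the weak formulation \eqref{den411}, where the bilinear form is now $\mathcal{B}_K(\phi,\psi):=\mathcal{B}(\phi,\psi)+K\iiint_{\mathbb{D}}\phi\psi\,dy$ on the Hilbert space $H^1(\mathbb{D})$ and $K>0$ is to be chosen. Since $\mathbb{D}=(r_s,r_2)\times E$ is a box, hence a bounded Lipschitz domain, the trace operator $H^1(\mathbb{D})\to L^2(\partial\mathbb{D})$ and its restrictions to the faces $\{y_1=r_s\}$ and $\{y_1=r_2\}$ are bounded; combined with the fact that all the coefficients $d_1,d_2,d_4,d_5$ and $a_4$ are smooth and bounded on the compact interval $[r_s,r_2]$, this gives at once the boundedness of $\mathcal{B}_K$ on $H^1(\mathbb{D})\times H^1(\mathbb{D})$. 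The functional $\mathcal{L}$ is a bounded linear functional on $H^1(\mathbb{D})$ because $G_5\in C^{\alpha}(\overline{\mathbb{D}})\subset L^2(\mathbb{D})$ and $m_1,m_2\in C^{2,\alpha}(\overline{E})\subset L^2(E)$, again using the trace theorem for the two boundary integrals.

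The heart of the matter is the coercivity of $\mathcal{B}_K$. Taking $\psi=\phi$, the principal part contributes $\iiint_{\mathbb{D}}\big(d_1|\p_{y_1}\phi|^2+\tfrac{1}{y_1^2}|\p_{y_2}\phi|^2+|\p_{y_3}\phi|^2\big)dy$. Since the background downstream flow is strictly subsonic on $[r_s,r_2]$, one has $d_1(y_1)=1-\bar{M}^2(y_1)\ge c_1>0$ there, while $\tfrac{1}{y_1^2}\ge r_2^{-2}$, so the principal part dominates $c_0\|\na\phi\|_{L^2(\mathbb{D})}^2$ for some $c_0>0$. The remaining terms are treated as follows: the Robin boundary term $\iint_E d_1(r_s)a_4|\phi(r_s,y')|^2\,dy'$ is \emph{nonnegative}, because $d_1(r_s)>0$ and $a_4=a_0a_1a_3>0$, so it has the favorable sign; the drift term $-\iiint_{\mathbb{D}}d_5\p_{y_1}\phi\,\phi$ is absorbed by Young's inequality into $\tfrac{c_0}{2}\|\p_{y_1}\phi\|_{L^2(\mathbb{D})}^2+C\|\phi\|_{L^2(\mathbb{D})}^2$; and the nonlocal volume term is bounded, using $\int_{r_s}^{r_2}|\phi(y_1,y')|\,dy_1\le (r_2-r_s)^{1/2}\|\phi(\cdot,y')\|_{L^2_{y_1}}$ and Cauchy--Schwarz in $y'$, by $\eta\,\|\phi(r_s,\cdot)\|_{L^2(E)}^2+C_\eta\|\phi\|_{L^2(\mathbb{D})}^2$; choosing $\eta=\tfrac12\,d_1(r_s)a_4$ allows the first piece to be absorbed into the favorable boundary term. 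This yields $\mathcal{B}(\phi,\phi)\ge\tfrac{c_0}{2}\|\na\phi\|_{L^2(\mathbb{D})}^2-C_*\|\phi\|_{L^2(\mathbb{D})}^2$, hence $\mathcal{B}_K(\phi,\phi)\ge\tfrac{c_0}{2}\|\na\phi\|_{L^2(\mathbb{D})}^2+(K-C_*)\|\phi\|_{L^2(\mathbb{D})}^2\ge\tfrac{c_0}{4}\|\phi\|_{H^1(\mathbb{D})}^2$ once $K$ is fixed larger than $C_*+\tfrac{c_0}{4}$. Note that $\mathcal{B}_K$ is not symmetric because of the drift and nonlocal terms, so one uses the general form of the Lax--Milgram theorem.

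With the boundedness of $\mathcal{B}_K$ and $\mathcal{L}$ and the coercivity of $\mathcal{B}_K$ established, the Lax--Milgram theorem produces a unique $\phi\in H^1(\mathbb{D})$ with $\mathcal{B}_K(\phi,\psi)=\mathcal{L}(\psi)$ for all $\psi\in H^1(\mathbb{D})$, which is by construction the unique weak solution of \eqref{den420}; uniqueness is immediate from coercivity. I expect the only genuinely delicate point to be the coercivity estimate for the nonlocal and drift terms, and in particular recognizing that the Robin boundary term carries the right sign (because $a_4>0$ and $d_1(r_s)>0$) and using it to absorb the trace of $\phi$ on $\{y_1=r_s\}$ coming from the nonlocal volume term, rather than attempting to control that trace by $\|\na\phi\|_{L^2(\mathbb{D})}$, which would be wasteful.
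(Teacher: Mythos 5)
Your proposal is correct and follows essentially the same route as the paper: both verify boundedness of $\mathcal{B}_K$ and $\mathcal{L}$ via the trace theorem, establish coercivity by using the positivity of the Robin boundary term $\iint_E d_1(r_s)a_4\phi(r_s,y')^2\,dy'$ together with Cauchy--Schwarz on the drift and nonlocal terms, and then take $K$ large and apply Lax--Milgram. The organization of the absorption steps is slightly different in wording but mathematically the same.
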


\begin{proof}
The weak formulation of the problem \eqref{den420} is the existence of a $H^1(\mathbb{D})$ function $\phi$ such that
\be\label{den421}
\mathcal{B}_K(\phi,\psi):=\mathcal{B}(\phi,\psi) + K\iiint_{\mathbb{D}} \phi\psi dy= \mathcal{L}(\psi),\ \ \forall \psi\in H^1(\mathbb{D}).
\ee

For any $\epsilon>0$, there holds
\be\no
\iint_{E}\phi^2(r_s,y')dy'\leq \frac{C_0}{\epsilon} \iiint_{\mathbb{D}} \phi^2(y_1,y')dy' dy_1+ \epsilon \iiint_{\mathbb{D}}(\p_{y_1}\phi)^2(y_1,y') dy' dy_1.
\ee
The boundedness and coercivity of $\mathcal{B}_K$ can be verified as follows
\be\no
&&|\mathcal{B}_K(\phi,\psi)|\leq C_0\|\phi\|_{H^1(\mathbb{D})}\|\psi\|_{H^1(\mathbb{D})},\\\no
&&|\mathcal{L}(\psi)|\leq C_0(\|G_5\|_{L^2(\mathbb{D})}+\sum_{j=1}^2\|m_j\|_{L^2(E)})\|\psi\|_{H^1(\mathbb{D})}
\ee
and
\be\no
&&\mathcal{B}_K(\phi,\phi)=\iiint_{\mathbb{D}} d_1(y_1)(\p_{y_1}\phi)^2 + \frac{1}{y_1^2}(\p_{y_2}\phi)^2+ (\p_{y_3}\phi)^2 - d_5(y_1) \p_{y_1}\phi \phi \\\no
&&\quad\quad+ a_0a_1 d_4(y_1)\phi(r_s,y')\phi(y_1,y')dy_1 dy' +K\|\phi\|_{L^2(\mathbb{D})}^2 + \iint_{E} d_1(r_s)a_4 (\phi(r_s,y'))^2 dy',\\\no
&&\geq C_*(\|\nabla\phi\|_{L^2(\mathbb{D})}^2+\|\phi(r_s,\cdot)\|_{L^2(E)}^2)+K\|\phi\|_{L^2(\mathbb{D})}^2-\frac{C_*}{4}\|\p_{y_1}\phi\|_{L^2(\mathbb{D})}^2\\\no
&&\quad-\frac{C_*}{4}\|\phi(r_s,\cdot)\|_{L^2(E)}^2-\tilde{C}_*\|\phi\|_{L^2(\mathbb{D})}^2\\\no
&&\geq \frac{C_*}{2}(\|\nabla\phi\|_{L^2(\mathbb{D})}^2+\|\phi(r_s,\cdot)\|_{L^2(E)}^2)+\frac{K}{2}\|\phi\|_{L^2(\mathbb{D})}^2,
\ee
provided that $K$ is large enough. By the Lax-Milgram theorem, there exists a unique $H^1(\mathbb{D})$ solution $\phi$ satisfying \eqref{den421}, which completes the proof of Lemma \ref{weak}.
\end{proof}

Now we are ready to solve the problem \eqref{den41}.
\begin{proposition}\label{solvability}
{\it Suppose that $G_5\in C^{1,\alpha}(\overline{\mathbb{D}})$ and $(m_1,m_2)\in (C^{2,\alpha}(\overline{E}))^2$ satisfy \eqref{den415}. Then there exists a unique $C^{3,\alpha}(\overline{\mathbb{D}})$ to the problem \eqref{den41} with
\be\label{den414}
\|\phi\|_{C^{3,\alpha}(\overline{\mathbb{D}})}\leq C_*(\|G_5\|_{C^{1,\alpha}(\overline{\mathbb{D}})}+\sum_{j=1}^2\|m_j\|_{C^{2,\alpha}(\overline{E})}),
\ee
where $C_*$ depends only on $d_1,d_4,d_5, a_3,a_4$ and thus on the background solution.
}\end{proposition}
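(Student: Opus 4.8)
The plan is to derive Proposition~\ref{solvability} from Lemma~\ref{weak} via the Fredholm alternative, and then to bootstrap the regularity of the resulting weak solution by Schauder estimates, the symmetric reflection being needed near the edges of $\mathbb{D}$. For the reduction, rewrite \eqref{den41} in the weak form \eqref{den411}, $\mathcal{B}(\phi,\psi)=\mathcal{L}(\psi)$ for all $\psi\in H^1(\mathbb{D})$. Since $\mathcal{B}_K(\phi,\psi)=\mathcal{B}(\phi,\psi)+K\iiint_{\mathbb{D}}\phi\psi\,dy$ is, by Lemma~\ref{weak}, bounded and coercive for $K$ large, it induces an isomorphism $A_K:H^1(\mathbb{D})\to H^1(\mathbb{D})^*$, and \eqref{den411} is equivalent to $\phi=A_K^{-1}(\mathcal{L}+K\,\iota^*\iota\phi)$ with $\iota:H^1(\mathbb{D})\hookrightarrow L^2(\mathbb{D})$ compact; hence \eqref{den411} reads $(\mathrm{Id}-\mathcal{K})\phi=f$ with $\mathcal{K}:=KA_K^{-1}\iota^*\iota$ compact on $H^1(\mathbb{D})$ (the nonlocal term $\phi\mapsto a_0a_1d_4(y_1)\phi(r_s,\cdot)$ is already built into the coercive form $\mathcal{B}_K$, and is itself compact through the trace $H^1(\mathbb{D})\to H^{1/2}(E)\hookrightarrow L^2(E)$). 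By the Fredholm alternative, existence of a unique weak solution follows once the homogeneous problem ($G_5\equiv0$, $m_1\equiv m_2\equiv0$) is shown to admit only $\phi\equiv0$.

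For the uniqueness estimate, test the homogeneous problem with $\phi$ and integrate by parts, using the homogeneous oblique relation $\p_{y_1}\phi(r_s,\cdot)=a_4\phi(r_s,\cdot)$ and the homogeneous Neumann conditions on $\Sigma_2^{\pm}$, $\Sigma_3^{\pm}$ and at $y_1=r_2$; this yields the identity
\[
\iiint_{\mathbb{D}}\!\Big(d_1(\p_{y_1}\phi)^2+\tfrac{1}{y_1^2}(\p_{y_2}\phi)^2+(\p_{y_3}\phi)^2\Big)dy+\Big(d_1(r_s)a_4+a_0a_1\!\int_{r_s}^{r_2}\!d_4\,dy_1\Big)\iint_E\phi^2(r_s,y')\,dy'=\mathcal{R},
\]
where $\mathcal{R}$ collects the first-order term from $d_5\p_{y_1}\phi$ (written as $\tfrac12 d_5\p_{y_1}(\phi^2)$ and integrated by parts) and the cross part of the nonlocal term, extracted after the splitting $\phi(y_1,y')=\phi(r_s,y')+\int_{r_s}^{y_1}\p_t\phi\,dt$. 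Because $d_1>0$ (subsonicity, $\bar M<1$), $a_4>0$ and $d_4>0$, every sign-definite contribution lands on the left, so that $\mathcal{R}$ — which involves only traces of $\phi$ and bounded multiples of $\|\p_{y_1}\phi\|_{L^2(\mathbb{D})}$ and $\|\phi\|_{L^2(\mathbb{D})}$ — can be absorbed with the help of the Poincar\'e inequality $\|\phi\|_{L^2(\mathbb{D})}^2\le C(\|\p_{y_1}\phi\|_{L^2(\mathbb{D})}^2+\|\phi(r_s,\cdot)\|_{L^2(E)}^2)$ anchored at the shock front. This forces $\nabla\phi\equiv0$, hence $\phi\equiv c$; substituting into the equation gives $-a_0a_1d_4(y_1)c=0$, i.e.\ $c=0$. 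Together with the Fredholm reduction this produces a unique weak solution with $\|\phi\|_{H^1(\mathbb{D})}\le C_*(\|G_5\|_{L^2(\mathbb{D})}+\|m_1\|_{L^2(E)}+\|m_2\|_{L^2(E)})$.

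Finally, upgrade $\phi\in H^1(\mathbb{D})$ to $C^{3,\alpha}(\overline{\mathbb{D}})$. Elliptic regularity for the mixed oblique/Neumann boundary value problem gives $\phi\in H^2(\mathbb{D})$, hence $\phi(r_s,\cdot)\in H^{3/2}(E)\hookrightarrow C^{0,\alpha}(\overline E)$, so the effective right-hand side $G_5+a_0a_1d_4\,\phi(r_s,\cdot)$ of the first equation in \eqref{den41} lies in $C^{0,\alpha}(\overline{\mathbb{D}})$ and Schauder estimates give $\phi\in C^{2,\alpha}(\overline{\mathbb{D}})$; then $\phi(r_s,\cdot)\in C^{2,\alpha}(\overline E)$, the right-hand side lies in $C^{1,\alpha}(\overline{\mathbb{D}})$, and a second Schauder step gives $\phi\in C^{3,\alpha}(\overline{\mathbb{D}})$, with \eqref{den414} obtained by tracking constants through Lemma~\ref{weak}, the Fredholm reduction, and the two Schauder steps. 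The edges where $\Sigma_2^{\pm}$, $\Sigma_3^{\pm}$ meet each other and the faces $\{y_1=r_s\}$, $\{y_1=r_2\}$ are handled by even reflection across the lateral walls — legitimate since the Neumann data there is homogeneous — and the compatibility conditions \eqref{den415} guarantee that the reflected coefficients and right-hand sides stay $C^{1,\alpha}$, so the Schauder estimates apply up to the corners.

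I expect the crux to be the uniqueness estimate of the second step: closing the absorption of the nonlocal cross term against the positive oblique-boundary contribution $d_1(r_s)a_4\|\phi(r_s,\cdot)\|_{L^2(E)}^2$ and the positive term $a_0a_1\!\int_{r_s}^{r_2}\!d_4\,dy_1$ without imposing any smallness on the background geometry — which is precisely where the structural signs $d_1,d_4,a_4>0$ are exploited — together with the corner/edge regularity handled by the reflection argument in the last step.
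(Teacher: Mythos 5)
Your Fredholm reduction and the Schauder bootstrap (with even reflection near the corners and using \eqref{den415} to keep the extended data $C^{1,\alpha}$) are essentially the paper's route, and they are fine. The genuine gap is in the uniqueness step, which is also where the paper makes a qualitatively different choice, precisely because the energy argument you propose does not close.

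After testing the homogeneous problem with $\phi$, moving the sign-definite pieces to the left, writing $d_5\,\phi\,\p_{y_1}\phi=\tfrac12 d_5\,\p_{y_1}(\phi^2)$ and integrating by parts, the right-hand side $\mathcal{R}$ contains, among other terms, $-\tfrac12\iiint d_5'\,\phi^2$, $\tfrac12 d_5(r_2)\iint\phi^2(r_2,\cdot)$, $-\tfrac12 d_5(r_s)\iint\phi^2(r_s,\cdot)$, and the nonlocal cross integral $-a_0a_1\iiint d_4\big(\int_{r_s}^{y_1}\p_t\phi\,dt\big)\phi(r_s,\cdot)$. None of these has a favorable sign in general: $d_5=\tfrac1{y_1}+d_2-d_1'$ and $d_5'$ are not sign-controlled, and, more seriously, $d_2\sim \tfrac{1}{y_1(1-\bar M^2)}$ while $d_1=1-\bar M^2$, so the ratio of the bad constants in $\mathcal{R}$ to the coercivity constants $\min d_1$, $d_1(r_s)a_4$, $a_0a_1\int d_4$ can be arbitrarily large when $\bar M^2$ is close to $1$. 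Poincar\'e anchored at the shock only trades $\|\phi\|_{L^2}^2$ and $\|\phi(r_2,\cdot)\|_{L^2}^2$ for $\|\p_{y_1}\phi\|_{L^2}^2+\|\phi(r_s,\cdot)\|_{L^2}^2$ with a constant of order $r_2-r_s$; this does not make the absorption constant universally small, so ``structural signs $d_1,d_4,a_4>0$'' alone are insufficient. If this energy argument worked it would implicitly impose a restriction on the background (a smallness relation among $d_1,d_5,a_4,d_4,r_2-r_s$), contradicting the stated goal of removing background restrictions.

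The paper sidesteps this entirely: it expands $\phi$ in the complete orthonormal system $\{\beta_i(y_2)\mu_j(y_3)\}$ adapted to the homogeneous Neumann walls, reducing the homogeneous problem to a family of decoupled ODE boundary value problems
\[
d_1 X_{i,j}'' + \Big(\tfrac1{y_1}+d_2\Big)X_{i,j}' - \Big(\tfrac{\tau_i^2}{y_1^2}+\lambda_j^2\Big)X_{i,j} = a_0a_1 d_4\,X_{i,j}(r_s),\quad X_{i,j}'(r_s)=a_4X_{i,j}(r_s),\ X_{i,j}'(r_2)=0,
\]
and then kills each $X_{i,j}$ by the ODE maximum principle and Hopf's lemma: if $X_{i,j}(r_s)>0$, the forcing $a_0a_1d_4X_{i,j}(r_s)>0$ together with $X_{i,j}'(r_s)>0$ rules out an interior maximum and Hopf rules out a maximum at $r_2$; the case $X_{i,j}(r_s)<0$ is symmetric, and $X_{i,j}(r_s)=0$ reduces to a standard homogeneous Neumann ODE. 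This only uses $d_1>0$, $a_4>0$, $d_4>0$ and is uniform in the sizes of $d_1,d_2,d_5$ and of $r_2-r_s$. You should replace your energy absorption by this Fourier-decomposition-plus-maximum-principle argument, or else supply a genuine proof that the absorption constants work out for an arbitrary background — which, as noted, cannot hold without further hypotheses.
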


\begin{proof}

We first improve the regularity of any $H^1(\mathbb{D})$ weak solutions to \eqref{den41}. The goal is to show that for any weak solution $\phi\in H^1(\mathbb{D})$ to \eqref{den41}, $\phi$ indeed has a better regularity $\phi\in C^{3,\alpha}(\overline{\mathbb{D}})$ satisfying the following estimate:
\be\label{den412}
\|\phi\|_{C^{3,\alpha}(\overline{\mathbb{D}})}\leq C_*(\|\phi\|_{H^1(\mathbb{D})}+\|G_5\|_{C^{1,\alpha}(\overline{\mathbb{D}})}+\sum_{j=1}^2\|m_j\|_{C^{2,\alpha}(\overline{E})}).
\ee

To this end, one can rewrite the first equation in \eqref{den41} as a standard second order elliptic equation for $\phi$:
\be\no
\p_{y_1}(d_1(y_1) \p_{y_1} \phi)+\frac{1}{y_1^2}\p_{y_2}^2 \phi +\p_{y_3}^2 \phi + d_5(y_1)\p_{y_1}\phi=G_6(y):=G_5+a_0a_1 d_4(y_1)\phi(r_s,y').
\ee
Since $\phi\in H^1(\mathbb{D})$, $\phi(r_s,y')\in L^2(E)$ by the trace theorem. Together with the boundary conditions in \eqref{den41}, one can apply \cite[Theorems 5.36 and 5.45]{lieberman13} to obtain global $L^{\infty}$ bound and $C^{\alpha_1}$ estimates (for some $\alpha_1\in (0,1)$) on $\phi$ as follows,
\be\no
\|\phi\|_{C^{0,\alpha_1}(\overline{\mathbb{D}})}&\leq& C_*\bigg(\|a_0a_1d_4(y_1)\phi(r_s,y')\|_{L^2(E)}+\|G_5\|_{L^4(\mathbb{D})}+\sum_{j=1}^2\|m_j\|_{C^{1,\alpha}(\overline{E})}\bigg)\\\no
&\leq&C_*(\|\phi\|_{H^1(\mathbb{D})}+\|G_5\|_{C^{1,\alpha}(\overline{\mathbb{D}})}+\sum_{j=1}^2\|m_j\|_{C^{1,\alpha}(\overline{E})}).
\ee
Hence the term $a_0a_1 d_4(y_1) \phi(r_s,y')\in C^{\alpha_1}(\overline{\mathbb{D}})$ and the Schauder type estimate (cf. \cite[Theorem 4.6]{lieberman13}) would imply that
\be\no
\|\phi\|_{C^{1,\alpha}(\overline{\mathbb{D}})}\leq C_*(\|\phi\|_{H^1(\mathbb{D})}+\|G_5\|_{C^{1,\alpha}(\overline{\mathbb{D}})}+\sum_{j=1}^2\|m_j\|_{C^{\alpha}(\overline{E})}).
\ee
We extend $\phi,G_6$ to $\tilde{\phi}$ and $\tilde{G}_6$ on $\mathbb{D}_e$ as for $G_2$ in \eqref{ext2}, and extend $m_i,i=1,2$ as
\be\no
\tilde{m}_i(y')=\begin{cases}
m_i(y_2,y_3),\ y'\in \overline{\mathbb{D}},\\
m_i(2\theta_0-y_2,y_3),\ y'\times [\theta_0,3\theta_0]\times [-1,1],\\
m_i(-2\theta_0-y_2,y_3),\ y'\times [-3\theta_0,-\theta_0]\times [-1,1],
\end{cases}\ee
The extension of $m_i,i=1,2$ along the $x_3$ direction can be defined similarly. Then $\tilde{\phi}$ satisfies
\be\no\begin{cases}
\p_{y_1}(d_1(y_1) \p_{y_1} \tilde{\phi})+\frac{1}{y_1^2}\p_{y_2}^2 \tilde{\phi} +\p_{y_3}^2 \tilde{\phi} + d_5(y_1)\p_{y_1}\tilde{\phi}=\tilde{G}_6(y),\ \ \text{in }\mathbb{D}_e,\\
\p_{y_1}\tilde{\phi}(r_s,y')-a_4 \tilde{\phi}(r_s,y')=\tilde{m}_1(y'),\ \forall y'\in [-3\theta_0,3\theta_0]\times [-3,3],\\
\p_{y_1}\tilde{\phi}(r_2,y')=\tilde{m}_2(y'),\ \forall y'\in [-3\theta_0,3\theta_0]\times [-3,3].
\end{cases}\ee
Thanks to \eqref{den415}, one has $\tilde{G}_6\in C^{1,\alpha}(\overline{\mathbb{D}_e})$ and $\tilde{m}_i\in C^{2,\alpha}([-3\theta_0,3\theta_0]\times [-3,3])$ and the desired estimate \eqref{den412} follows from the standard Schauder estimates.

Next we will show the uniqueness of the $H^1(\mathbb{D})$ weak solution to \eqref{den41}, i.e. if $G_4\equiv 0, m_1=m_2=0$, and $\phi(y)\in H^1(\mathbb{D})$ is a weak solution to \eqref{den41}, then $\phi\equiv 0$ on $\mathbb{D}$.

Let $\{\beta_i(y_2)\}_{i=1}^{\infty}$ be the family of all eigenfunctions
to the eigenvalue problem
\be\no\begin{cases}
-\beta_i''(y_2)= \tau_i^2 \beta_i(y_2),\  y_2 \in (-\theta_0,\theta_0),\\
\beta_i'(-\theta_0)= \beta_i'(\theta_0)=0.
\end{cases}\ee
Then one has
\be\no
\{\beta_i(y_2)\}_{i=0}^{\infty}= \{\frac{1}{\sqrt{2 \theta_0}}\}\cup \{\frac{1}{\sqrt{\theta_0}}\cos(\frac{k\pi}{\theta_0} y_2)\}_{k=1}^{\infty} \cup \{\sin (\frac{2k+1}{2\theta_0}\pi y_2)\}_{k=0}^{\infty},
\ee
which form a complete orthonormal basis in $L^2((-\theta_0,\theta_0))$ and an orthogonal basis in $H^1((-\theta_0,\theta_0))$. Let
\be\no
\{\mu_j(y_3)\}_{j=0}^{\infty}=\{\frac{1}{\sqrt{2}}\}\cup \{\cos(k\pi y_3)\}_{k=1}^{\infty} \cup \{\sin (\frac{2k+1}{2}\pi y_3)\}_{k=0}^{\infty}.
\ee
Then $\{\mu_j(y_3)\}_{j=0}^{\infty}$ form a complete orthonormal basis in $L^2((-1,1))$ and an orthogonal basis in $H^1((-1,1))$. Denote the eigenvalue associated to $\mu_j$ by $\lambda_j^2$ for any $j\geq 0$. Then the set $\{\beta_i(y_2)\mu_j(y_3)\}_{i,j=0}^{\infty}$ forms a complete orthonormal basis in $L^2((-\theta_0,\theta_0)\times (-1,1))$ and an orthogonal basis in $H^1((-\theta_0,\theta_0)\times(-1,1))$.

It follows from the previous regularity that $\phi\in C^{3,\alpha}(\overline{\mathbb{D}})$. Thus its Fourier series converges
\be\no
\phi(y_1,y')=\sum_{i,j=0}^{\infty} X_{i,j}(y_1)\beta_i(y_2)\mu_j(y_3).
\ee
Substituting this into \eqref{den41} yields that for $i,j\geq 0$, it holds that
\be\no\begin{cases}
d_1(y_1) X_{i,j}''(y_1) +(\frac{1}{y_1}+d_2(y_1)) X_{i,j}'(y_1)-(\frac{\tau_i^2}{y_1^2}+\lambda_j^2)  X_{i,j}(y_1)-a_0 a_1 d_4(y_1)X_{i,j}(r_s)=0,\\
X_{i,j}'(r_s)-a_4 X_{i,j}(r_s)=0,\\
X_{i,j}'(r_2)=0.
\end{cases}\ee
Suppose that $X_{i,j}(r_s)=0$, then the maximum principle and Hopf's lemma imply that $X_{i,j}(y_1)\equiv 0$ for $\forall y_1\in [r_s,r_2]$. Suppose that $X_{i,j}(r_s)>0$. Then
\be\label{den45}\begin{cases}
d_1(y_1) X_{i,j}''(y_1) + (\frac{1}{y_1}+d_2(y_1)) X_{i,j}'(y_1)-(\frac{\tau_i^2}{y_1^2}+\lambda_j^2) X_{i,j}(y_1)\\
\q\q=a_0a_1 d_4(y_1)X_{i,j}(r_s)>0,\ \forall y_1\in [r_s,r_2],\\
X_{i,j}'(r_s)=a_4 X_{i,j}(r_s)>0,\ \ \ \ X_{i,j}'(r_2)=0.
\end{cases}\ee
Assume that $X_{i,j}(y_1)$ achieves its maximum at $y_1=\hat{y}_1$: $X_{i,j}(\hat{y}_1)=\max_{y_1\in [r_s,r_2]} X_{i,j}(y_1)>0$. Since $X_{i,j}(r_s)>0$ and $X_{i,j}'(r_s)>0$, $\hat{y}_1\in (r_s,r_2]$. If $\hat{y}_1\in (r_s,r_2)$, then $X_{i,j}'(\hat{y}_1)=0, X_{i,j}''(\hat{y}_1)\leq 0$, which contradicts to the first equation in \eqref{den45}. If $\hat{y}_1=r_2$, by Hopf's lemma, one has $X_{i,j}'(r_2)>0$, which is also a contradiction too. Similarly, $X_{i,j}(r_s)<0$ will induce a contradiction. Therefore $X_{i,j}(y_1)\equiv 0$ for all $y_1\in [r_s,r_2]$. Consequently, $\phi\equiv 0$ in $\mathbb{D}$.

Thus we have proved the uniqueness of the $H^1$ weak solution to \eqref{den41}. This together with Lemma \ref{weak} and the Fredholm alternatives for elliptic equations implies the existence and uniqueness of $H^1(\mathbb{D})$ weak solution to \eqref{den41} (See the argument in \cite[Theorem 8.6]{gt98}). With the aid of uniqueness, the estimate \eqref{den414} follows from \eqref{den412}. Hence the proof of the proposition is completed.

\end{proof}

Thus $N_1(y)=\p_{y_1}\phi(y)-\frac{1}{a_3} d_3(y_1)\p_{y_1}\phi(r_s,y'), N_2(y)=\frac{1}{y_1}\p_{y_2}\phi(y_1,y')$ and $N_3(y)=\p_{y_3}\phi$ would solve the problem \eqref{den36} with \eqref{shock24}-\eqref{shock25}. Differentiating the first equation in \eqref{den36} with respect to $y_2$ (resp. $y_3$) and evaluating at $y_2=\pm\theta_0$ (resp. $y_3=\pm 1$), one gets from \eqref{den342},\eqref{shock26} and \eqref{shock27} that
\be\no\begin{cases}
\p_{y_2}^2 N_2(y_1,\pm \theta_0,y_3)=0,\ \ &\text{on } \Sigma_2^{\pm},\\
\p_{y_3}^2 N_3(y_1,y_2,\pm 1)=0,\ \ &\text{on } \Sigma_3^{\pm}.
\end{cases}\ee

Then
\be\no
&&V_1(y_1,y')=\dot{V}_1(y)+\p_{y_1}\phi(y)-\frac{1}{a_3} d_3(y_1)\p_{y_1}\phi(r_s,y'),\\\no
&&V_2(y_1,y')=\dot{V}_2(y_1,y')+\frac{1}{y_1}\p_{y_2}\phi(y_1,y'),\ \ \ V_3(y_1,y')=\dot{V}_3(y_1,y')+\p_{y_3}\phi(y_1,y'),
\ee
will solve the problem \eqref{den32} with \eqref{shock22}-\eqref{shock23} and satisfy
\be\no
&&\sum_{j=1}^3\|V_j\|_{C^{2,\alpha}(\overline{\mathbb{D}})}\leq C_*(\sum_{j=1}^3\|\dot{V}_j\|_{C^{2,\alpha}(\overline{\mathbb{D}})}+\|\nabla \phi\|_{C^{2,\alpha}(\overline{\mathbb{D}})}+ \|\p_{y_1}\phi(r_s,y')\|_{C^{2,\alpha}(\overline{E})})\\\label{den46}
&&\leq C_*(\epsilon +C_*(\epsilon\|(\hat{{\bf V}}, \hat{V}_6)\|_{\mathcal{X}}+\|(\hat{{\bf V}}, \hat{V}_6)\|_{\mathcal{X}}^2)\leq C_*(\epsilon+\epsilon \delta_0 +\delta_0^2).
\ee
Also the following compatibility conditions hold
\be\label{den461}\begin{cases}
(V_2,\p_{y_2}^2 V_2,\p_{y_2} V_1,\p_{y_2} V_3)(y_1,\pm\theta_0,y_3)=0,\ \ \ &\text{on } \Sigma_2^{\pm},\\
(V_3,\p_{y_3}^2 V_3,\p_{y_3} V_1,\p_{y_3} V_2)(y_1,y_2,\pm 1)=0,\ \ &\text{on } \Sigma_3^{\pm}.
\end{cases}\ee

{\bf Step 5.} After obtaining $V_1,V_2,V_3$, one can determine uniquely the function $V_4$ in \eqref{ent43}:
\be\label{ent53}
V_4(y_1,y')=\frac{a_2}{a_1} V_1(r_s,y')+R_4(\hat{{\bf V}}(r_s,\beta_2(y),\beta_3(y)),\hat{V}_6(\beta_2(y),\beta_3(y))).
\ee
Then it can be checked easily that the following estimate and compatibility conditions hold:
\be\no
&&\|V_4\|_{C^{2,\alpha}(\overline{\mathbb{D}})}\leq C_*\|V_1(r_s,\cdot)\|_{C^{2,\alpha}(\overline{E})}+ C_*(\epsilon \|(\hat{{\bf V}}, \hat{V}_6)\|_{\mathcal{X}}+\|(\hat{{\bf V}}, \hat{V}_6)\|_{\mathcal{X}}^2)\\\label{ent54}
&&\leq C_*(\epsilon \delta_0+ \delta_0^2)
\ee
and
\be\label{ent55}\begin{cases}
\p_{y_2} V_4(y_1,\pm \theta_0, y_3)=\frac{a_2}{a_1}\p_{y_2} V_1(r_s,\pm\theta_0,y_3)=0,\ \ \text{on }\Sigma_2^{\pm},\\
\p_{y_3} V_4(y_1,y_2,\pm 1)=\frac{a_2}{a_1}\p_{y_3} V_1(r_s,y_2,\pm 1)=0,\ \ \text{on }\Sigma_3^{\pm}.
\end{cases}\ee

Finally, the shock front is given by
\be\label{shock50}
V_6(y')=\frac{1}{a_1} V_1(r_s,y')-\frac{1}{a_1} R_1(\hat{{\bf V}}(r_s,y'),\hat{V}_6(y')),
\ee
and it is clear that $V_6\in C^{2,\alpha}(\overline{E})$ and
\be\label{shock51}\begin{cases}
\p_{y_2}V_6(\pm\theta_0, y_3)=0,\ \ &\text{on }y_3\in [-1,1],\\
\p_{y_3}V_6(y_2, \pm 1)=0,\ \ &\text{on }y_2\in [-\theta_0,\theta_0].
\end{cases}\ee
It remains to improve the regularity of $V_6$ to be $C^{3,\alpha}(\overline{E})$. To this purpose, one can define
\be\no\begin{cases}
F_2(y'):= (\frac{1}{r_s}\p_{y_2} V_1-a_0 V_2)(r_s,y')-(a_1 g_2+\frac{1}{r_s}\p_{y_2}R_1)(\hat{{\bf V}}(r_s,y'),\hat{V}_6(y')),\\
F_3(y'):= (\p_{y_3} V_1-a_0 V_3)(r_s,y')-(a_1 g_3+\p_{y_3}R_1)(\hat{{\bf V}}(r_s,y'),\hat{V}_6(y')).
\end{cases}\ee
Then it follows from the first boundary condition in \eqref{den32}, the boundary data in \eqref{vor501} and the boundary conditions \eqref{shock22}-\eqref{shock23} that
\be\no\begin{cases}
\frac{1}{r_s}\p_{y_2} F_2 + \p_{y_3} F_3=0,\ \ &\text{in }E,\\
\frac{1}{r_s}\p_{y_2} F_3-\p_{y_3} F_2=0,\ \ &\text{in }E,\\
F_2(\pm\theta_0, y_3)=0,\ \ &\text{on }y_3\in [-1,1],\\
F_3(y_2,\pm 1)=0,\ \ \ &\text{on }y_2 \in [-\theta_0,\theta_0].
\end{cases}\ee
Thus by Lemma \ref{equi0}, $F_2=F_3\equiv 0$ in $E$. Using the equation \eqref{shock50}, there holds
\be\label{shock53}\begin{cases}
\frac{1}{r_s}\p_{y_2} V_6(y')= a_0 V_2(r_s,y') + g_2(\hat{{\bf V}}(r_s,y'),\hat{V}_6(y')), \ \ &\text{in }E,\\
\p_{y_3} V_6(y')= a_0 V_3(r_s,y') + g_3(\hat{{\bf V}}(r_s,y'),\hat{V}_6(y')), \ \ &\text{in }E.
\end{cases}\ee
Therefore $V_6\in C^{3,\alpha}(\overline{E})$ admits the following estimate
\be\label{shock54}
&&\|V_6\|_{C^{3,\alpha}(\overline{E})}\leq C_*(\|V_1(r_s,\cdot)\|_{C^{2,\alpha}(\overline{E})}+ \|R_1(\hat{{\bf V}}(r_s,y'),\hat{V}_6(y'))\|_{C^{2,\alpha}(\overline{E})})\\\no
&&\quad + C_*\sum_{j=2}^3(\|V_j(r_s,\cdot)\|_{C^{2,\alpha}(\overline{E})}+\|g_j(\hat{{\bf V}}(r_s,y'),\hat{V}_6(y'))\|_{C^{2,\alpha}(\overline{E})})\\\no
&&\leq C_*(\epsilon + \epsilon  \|(\hat{{\bf V}}, \hat{V}_6)\|_{\mathcal{X}}+\|(\hat{{\bf V}}, \hat{V}_6)\|_{\mathcal{X}}^2)\leq C_*(\epsilon +\epsilon \delta_0+\delta_0^2).
\ee
Differentiating the first (second) equation in \eqref{shock53} with respective to $y_2$ (resp. $y_3$) twice and evaluating at $y_2=\pm \theta_0$ (resp. $y_3=\pm 1$), using \eqref{j211} and \eqref{den461}, one can verify that
\be\label{shock55}\begin{cases}
\p_{y_2}^3V_6(\pm\theta_0, y_3)=0,\ \ &\forall y_3\in [-1,1],\\
\p_{y_3}^3 V_6(y_2, \pm 1)=0,\ \ &\forall y_2\in [-\theta_0,\theta_0].
\end{cases}\ee

Combining the estimates \eqref{ber43}, \eqref{den46}, \eqref{ent54} and \eqref{shock54}, one concludes that
\be\no
\|({\bf V}, V_6)\|_{\mathcal{X}}= \sum_{j=1}^5 \|V_j\|_{C^{2,\alpha}(\overline{\mathbb{D}})}+\|V_6\|_{C^{3,\alpha}(\overline{E})}\leq C_*(\epsilon +\epsilon \delta_0+\delta_0^2)\leq C_*(\epsilon +\delta_0^2).
\ee
Choose $\delta_0=\sqrt{\epsilon}$ and let $\epsilon<\epsilon_0=\frac{1}{4 C_*^2}$. Then $\|({\bf V}, V_6)\|_{\mathcal{X}}\leq 2C_*\epsilon\leq \delta_0$. Furthermore, the compatibility conditions \eqref{ber44}, \eqref{den461},\eqref{ent55}, \eqref{shock51} and \eqref{shock55} hold, thus $({\bf V}, V_6)\in \mathcal{X}$. We now can define the operator $\mathcal{T}:(\hat{{\bf V}}, \hat{V}_6)\mapsto ({\bf V}, V_6)$ which maps $\mathcal{X}$ to itself.

{\bf Step 6.} The contraction of the operator $\mathcal{T}$. It remains to prove that the operator $\mathcal{T}$ is a contraction in the norm
\be\no
\|({\bf V}, V_6)\|_w:=\sum_{j=1}^5 \|V_j\|_{C^{1,\alpha}(\overline{\mathbb{D}})}+\|V_6\|_{C^{2,\alpha}(\overline{E})},
\ee
so that one can find a unique fixed point to the operator $\mathcal{T}$ by the contraction mapping theorem. For any two elements $(\hat{{\bf V}}^i, \hat{V}_6^i), i=1,2$, define $({\bf V}^i, V_6^i)=\mathcal{T}(\hat{{\bf V}}^i, \hat{V}_6^i)$ for $i=1,2$. Denote
\be\no
(\hat{{\bf Z}}, \hat{Z}_6)=(\hat{{\bf V}}^1, \hat{V}_6^1)-(\hat{{\bf V}}^2, \hat{V}_6^2), \ \ ({\bf Z}, Z_6)=({\bf V}^1, V_6^1)-({\bf V}^2, V_6^2).
\ee

It follows from \eqref{ber41} that $Z_5$ satisfies
\be\no\begin{cases}
\bigg(D_1^{\hat{V}_6^1}+\frac{\hat{V}_2^1}{\bar{U}(D_0^{\hat{V}_6^1})+\hat{V}_1^1}D_2^{\hat{V}_6^1}+\frac{\hat{V}_3^1}{\bar{U}(D_0^{\hat{V}_6^1})
+\hat{V}_1^1}D_3^{\hat{V}_6^1}\bigg)Z_5=F_5(y),\\
Z_5(r_s,y')=h_5(y'),
\end{cases}\ee
where
\be\no
&&F_5(y)=-(D_1^{\hat{V}_6^1}-D_1^{\hat{V}_6^2})V_5^2-\sum_{j=2}^3\left(\frac{\hat{V}_j^1}{\bar{U}(D_0^{\hat{V}_6^1})+\hat{V}_1^1}D_j^{\hat{V}_6^1}-
\frac{\hat{V}_j^2}{\bar{U}(D_0^{\hat{V}_6^2})+\hat{V}_1^2}D_j^{\hat{V}_6^2}\right)V_5^2,\\\no
&&h_5(y')=B^-(r_s+\hat{V}_6^1(y'),y')-B^-(r_s+\hat{V}_6^2(y'),y').
\ee
Let $(\tau, \bar{y}_2^i(\tau;y),\bar{y}_3^i(\tau;y))$ be the trajectory associated with the vector field $(1, K_2^i, K_3^i)$ for $i=1,2$ respectively, where $K_2^i, K_3^i$ are defined as in \eqref{char} with $(\hat{{\bf V}}, \hat{V}_6)$ replaced by $(\hat{{\bf V}}^i, \hat{V}_6^i)$. Then
\be\no
Z_5(y_1,y')=h_5(\beta_2^1(y),\beta_3^1(y))+ \int_{r_s}^{y_1} F_5(\tau,\bar{y}_2^1(\tau;y),\bar{y}_3^1(\tau;y)) d\tau.
\ee
Thus one can get that
\be\no
&&\|Z_5\|_{C^{1,\alpha}(\overline{\mathbb{D}})}\leq C_* \epsilon \|\hat{Z}_6\|_{C^{1,\alpha}(\overline{E})}+ C_*\|(\hat{{\bf V}}^2, \hat{V}_6^2)\|_{\mathcal{X}}(\sum_{j=1}^3\|\hat{Z}_j\|_{C^{1,\alpha}(\overline{\mathbb{D}})}+\|\hat{Z}_6\|_{C^{2,\alpha}(\overline{\mathbb{D}})})\\\label{ber62}
&&\leq C_*(\epsilon + \delta_0)\|(\hat{{\bf Z}}, \hat{Z}_6)\|_w.
\ee

Next, we estimate the difference of the vorticities. Denote the vorticity associated with $(\hat{{\bf V}}^i, \hat{V}_6^i)$ by $(\tilde{\omega}_1^i, \tilde{\omega}_2^i, \tilde{\omega}_3^i)$ for $i=1,2$ and set
\be\no
J_1=\tilde{\omega}_1^1-\tilde{\omega}_1^2, \ J_2=\tilde{\omega}_2^1-\tilde{\omega}_2^2,\ \ J_3=\tilde{\omega}_3^1-\tilde{\omega}_3^2.
\ee
Then \eqref{vor501} implies that
\be\no\begin{cases}
\bigg(D_1^{\hat{V}_6^1}+\sum_{j=2}^3\frac{\hat{V}_j^1}{\bar{U}(D_0^{\hat{V}_6^1})+\hat{V}_j^1}D_j^{\hat{V}_6^1}\bigg)J_1+\mu(\hat{{\bf V}}^1,\hat{V}_6^1) J_1=F_6(y),\\
J_1(r_s,y')=h_6(y'),
\end{cases}\ee
where
\be\no
&&F_6(y)=-(D_1^{\hat{V}_6^1}-D_1^{\hat{V}_6^2})\tilde{\omega}_1^2-\sum_{j=2}^3\b(\frac{\hat{V}_j^1}{\bar{U}(D_0^{\hat{V}_6^1})+\hat{V}_1^1}D_j^{\hat{V}_6^1}-
\frac{\hat{V}_j^2}{\bar{U}(D_0^{\hat{V}_6^2})+\hat{V}_1^2}D_j^{\hat{V}_6^2}\b)\tilde{\omega}_1^2\\\no
&&\quad\quad-\b(\mu(\hat{{\bf V}}^1,\hat{V}_6^1)-\mu(\hat{{\bf V}}^2,\hat{V}_6^2)\b)\tilde{\omega}_1^2+ H_0(\hat{{\bf V}}^1,\hat{V}_6^1)-H_0(\hat{{\bf V}}^2,\hat{V}_6^2),\\\no
&&h_6(y')=R_6(\hat{{\bf V}}^1,\hat{V}_6^1)-R_6(\hat{{\bf V}}^2,\hat{V}_6^2).
\ee
Similar to the previous analysis, there holds
\be\no
&&\|J_1\|_{C^{\alpha}(\overline{\mathbb{D}})}\leq C_* (\epsilon + \|\tilde{\omega}_1^2\|_{C^{1,\alpha}(\overline{\mathbb{D}})}+\|(\hat{{\bf V}}^2,\hat{V}_6^2)\|_{\mathcal{X}})\|(\hat{{\bf Z}}, \hat{Z}_6)\|_w\\\no
&&\leq C_*(\epsilon + \delta_0)\|(\hat{{\bf Z}}, \hat{Z}_6)\|_w.
\ee

Now we can turn to the estimate of $Z_i, i=1,2,3$. It follows from the definition and \eqref{den32} that
\be\label{den62}\begin{cases}
d_1(y_1)\p_{y_1} Z_1+\frac{1}{y_1}\p_{y_2} Z_2 +\p_{y_3} Z_3+ \frac{Z_1}{y_1}+ d_2(y_1) Z_1=F_0(y),\ \ &\text{in }\mathbb{D},\\
\frac{1}{y_1}\p_{y_2} Z_3- \p_{y_3} Z_2+\p_{y_1} \Upsilon=F_1(y),\ \ &\text{in }\mathbb{D},\\
\p_{y_3} Z_1-\p_{y_1} Z_3+d_3(y_1)\p_{y_3} Z_1(r_s,y')+ \frac1{y_1}\p_{y_2}\Upsilon= F_2(y),\ \ &\text{in }\mathbb{D},\\
\p_{y_1} Z_2-\frac{1}{y_1}\p_{y_2} Z_1+\f{V_2}{y_1}-\frac{d_3(y_1)}{y_1}\p_{y_2} Z_1(r_s,y')+\p_{y_3}\Upsilon = F_3(y),\ \ &\text{in }\mathbb{D},\\
(\frac{1}{r_s^2}\p_{y_2}^2+\p_{y_3}^2) Z_1(r_s,y')=a_0 a_1 (\frac{1}{r_s}\p_{y_2} Z_2+\p_{y_3}Z_3)(r_s,y')+h_1(y'),\ \ &\text{on }E,\\
Z_2(y_1,\pm\theta_0,y_3)=\Upsilon(y_1,\pm \theta_0, y_3)=0,\ \ &\text{on }\Sigma_2^{\pm},\\
\Upsilon(r_s,y')=\Upsilon(r_2,y')=0,\ \ &\text{on }E,\\
Z_3(y_1,y_2,\pm 1)=\Upsilon(y_1,y_2,\pm 1)= 0,\ \ &\text{on }\Sigma_3^{\pm}, \\
Z_1(r_2,y')+ d_3(r_2) Z_1(r_s,y')=h_2(y'),\ \ &\text{on }E,
\end{cases}\ee
where
\be\no
&&F_0(y)=-\frac{(\gamma-1)(\bar{U}'+\frac{\bar{U}}{r})}{c^2(\bar{\rho},\bar{K})} Z_5+G_0(\hat{{\bf V}}^1,\hat{V}_6^1)-G_0(\hat{{\bf V}}^2,\hat{V}_6^2),\ \ \Upsilon=\Pi^1-\Pi^2,\\\no
&&F_1(y)=J_1+H_1(\hat{{\bf V}}^1,\hat{V}_6^1)-H_1(\hat{{\bf V}}^2,\hat{V}_6^2),\\\no
&&F_2(y)=\frac{\hat{V}_2^1 J_1+ D_3^{V_6^1} Z_5}{\bar{U}(D_0^{\hat{V}_6^1})+\hat{V}_1^1}+\b(\frac{\hat{V}_2^1}{\bar{U}(D_0^{\hat{V}_6^1})+\hat{V}_1^1}-\frac{\hat{V}_2^2 }{\bar{U}(D_0^{\hat{V}_6^2})+\hat{V}_1^2}\b)\tilde{\omega}_1^2 + H_2(\hat{{\bf V}}^1,\hat{V}_6^1)-H_2(\hat{{\bf V}}^2,\hat{V}_6^2)\\\no
&&\quad+\b(\frac{D_3^{V_6^1} V_5^2}{\bar{U}(D_0^{\hat{V}_6^1})+\hat{V}_1^1}-\frac{D_3^{V_6^2} V_5^2}{\bar{U}(D_0^{\hat{V}_6^2})+\hat{V}_1^2}\b)+\frac{\bar{B}-\frac{1}{2}\bar{U}^2(y_1)}{\gamma \bar{K}\bar{U}(y_1)}\p_{y_3}(R_4^1-R_4^2),\\\no
&&F_3(y)=\frac{\hat{V}_2^1 J_1-D_2^{V_6^1} Z_5}{\bar{U}(D_0^{\hat{V}_6^1})+\hat{V}_1^1}+\b(\frac{\hat{V}_3^1}{\bar{U}(D_0^{\hat{V}_6^1})+\hat{V}_1^1}-\frac{\hat{V}_3^2 }{\bar{U}(D_0^{\hat{V}_6^2})+\hat{V}_1^2}\b)\tilde{\omega}_1^2 + H_3(\hat{{\bf V}}^1,\hat{V}_6^1)-H_3(\hat{{\bf V}}^2,\hat{V}_6^2)\\\no
&&\quad -\b(\frac{D_2^{V_6^1} V_5^2}{\bar{U}(D_0^{\hat{V}_6^1})+\hat{V}_1^1}-\frac{D_2^{V_6^2} V_5^2}{\bar{U}(D_0^{\hat{V}_6^2})+\hat{V}_1^2}\b)-\frac{\bar{B}-\frac{1}{2}\bar{U}^2(y_1)}{\gamma \bar{K}\bar{U}(y_1)}\frac{1}{y_1}\p_{y_2}(R_4^1-R_4^2),\\\no
&&h_1(y')= q_1(\hat{{\bf V}}^1(r_s,y'),\hat{V}_6^1(y'))-q_1(\hat{{\bf V}}^2(r_s,y'),\hat{V}_6^2(y')),\\\no
&&h_2(y')= q_4(\hat{{\bf V}}^1(r_s,y'),\hat{V}_6^1(y'))-q_4(\hat{{\bf V}}^2(r_s,y'),\hat{V}_6^2(y')).
\ee
Furthermore, the system \eqref{den62} should be supplemented with
\be\no
&&\b(\frac{1}{r_s}\p_{y_2}Z_1-a_0a_1Z_2\b)(r_s,\pm \theta_0,y_3)=0,\ \forall y_3\in [-1,1],\\\label{shock63}
&&(\p_{y_3} Z_1-a_0 a_1 Z_3)(r_s,y_2,\pm 1)= 0,\ \ \forall y_2\in [-\theta_0,\theta_0].
\ee

By similar analysis in {\bf Step 4}, one can obtain
\be\label{den63}
&&\sum_{j=1}^3\|Z_j\|_{C^{1,\alpha}(\overline{\mathbb{D}})}\leq C_*(\sum_{j=0}^3 \|F_j\|_{C^{\alpha}(\overline{\mathbb{D}})}+\sum_{j=1}^2\|h_j\|_{C^{1,\alpha}(\overline{E})})\\\no
&&\leq C_*(\epsilon+\sum_{j=1}^2\|(\hat{{\bf V}}^j,\hat{V}_6^j)\|_{\mathcal{X}})\|(\hat{{\bf Z}}, \hat{Z}_6)\|_w\leq C_*(\epsilon+\delta_0)\|(\hat{{\bf Z}}, \hat{Z}_6)\|_w.
\ee
Due to \eqref{ent53}, $Z_4$ can be expressed as
\be\no
&&Z_4(y_1,y')=\frac{a_2}{a_1} Z_1(r_s,y')+R_4(\hat{{\bf V}}^1(r_s,\beta_2^1(y),\beta_3^1(y)),\hat{V}_6^1(\beta_2^1(y),\beta_3^1(y)))\\\no
&&\quad\quad-R_4(\hat{{\bf V}}^2(r_s,\beta_2^2(y),\beta_3^2(y)),\hat{V}_6^2(\beta_2^2(y),\beta_3^2(y)))
\ee
It can be checked that there is a term $\hat{V}_6(\beta_2(y),\beta_3(y))-\hat{V}_6(y')$ in $R_4$ which needs a further analysis:
\be\no
&&\|(\hat{V}_6^1(\beta_2^1(y),\beta_3^1(y))-\hat{V}_6^1(y'))-(\hat{V}_6^2(\beta_2^2(y),\beta_3^2(y))-\hat{V}_6^2(y'))\|_{C^{1,\alpha}(\overline{\mathbb{D}})}\\\no
&&\leq \|\hat{Z}_6(\beta_2^1(y),\beta_3^1(y))-\hat{Z}_6(y')\|_{C^{1,\alpha}(\overline{\mathbb{D}})}
+\|\hat{V}_6^2(\beta_2^1(y),\beta_3^1(y))-\hat{V}_6^2(\beta_2^2(y),\beta_3^2(y))\|_{C^{1,\alpha}(\overline{\mathbb{D}})}\\\no
&&\leq C_*\sum_{j=2}^3\|\beta_j^1-y_j\|_{C^{1,\alpha}(\overline{\Omega})}\|\hat{Z}_6\|_{C^{2,\alpha}(\overline{E})}
+\|\hat{V}_6^2\|_{C^{2,\alpha}(\overline{E})}\sum_{j=2}^3\|\beta_j^1-\beta_j^2\|_{C^{1,\alpha}(\overline{\mathbb{D}})}.
\ee

To estimate $\|\beta_j^1-\beta_j^2\|_{C^{1,\alpha}(\overline{\mathbb{D}})}$, one denotes $Y_j(\tau;y)=\bar{y}_j^1(\tau;y)-\bar{y}_j^2(\tau;y)$ for $j=2,3$, so that $Y_j(r_s;y)=\beta_j^1(y)-\beta_j^2(y)$. It follows from \eqref{char1} that
\be\no\begin{cases}
\frac{d}{d\tau} Y_2(\tau;y)= a_{22}(\tau;y) Y_2(\tau;y)+ a_{23}(\tau;y)Y_3(\tau;y)+ b_2(\tau;y),\\
\frac{d}{d\tau} Y_3(\tau;y)= a_{32}(\tau;y) Y_2(\tau;y)+ a_{33}(\tau;y)Y_3(\tau;y)+ b_3(\tau;y),\\
Y_2(y_1;y)=Y_3(y_1;y)=0,
\end{cases}\ee
where $a_{ij}, i,j=2,3$ are functions of $\hat{{\bf V}}^1,\hat{V}_6^1$ and $b_i, i=2,3$ can be expressed as functions of $\hat{{\bf Z}}, \hat{Z}_6$.

Then
\be\label{char12}\begin{cases}
Y_2(t;y)= \int_{y_1}^t  a_{22}(\tau;y) Y_2(\tau;y)+ a_{23}(\tau;y)Y_3(\tau;y)d\tau+ \int_{y_1}^t b_2(\tau;y)d\tau,\\
Y_3(t;y)= \int_{y_1}^t  a_{32}(\tau;y) Y_2(\tau;y)+ a_{33}(\tau;y)Y_3(\tau;y)d\tau+ \int_{y_1}^t b_3(\tau;y)d\tau.
\end{cases}\ee
Define $Y(t;y)=\max_{t\leq s\leq y_1} (|Y_2(s;y)|+|Y_3(s;y)|)$ and
$$a(t;y)=\max_{t\leq s\leq y_1} \sum_{i,j=2}^3|a_{ij}(s;y)|, \ \ b(t;y)=\max_{t\leq s\leq y_1} (|b_2(s;y)|+|b_3(s;y)|).$$
It then follows from \eqref{char12} that
\be\no
Y(t;y)\leq \int_{y_1}^t a(\tau) Y(\tau;y) d\tau + \int_{y_1}^t b(\tau;y) d\tau.
\ee
Then the Gronwall's inequality yields
\be\no
\sum_{j=2}^3\|\beta_j^1-\beta_j^2\|_{C^0(\overline{\mathbb{D}})}\leq C_*\sum_{j=1}^5(\|\hat{Z}_j\|_{C^0(\overline{\mathbb{D}})}+\|\hat{Z}_6\|_{C^1(\overline{\mathbb{D}})}).
\ee
Similarly, one can derive further that
\be\no
\sum_{j=2}^3\|\beta_j^1-\beta_j^2\|_{C^{1,\alpha}(\overline{\mathbb{D}})}\leq C_*\|(\hat{{\bf Z}},\hat{Z}_6)\|_{w}.
\ee
Hence it holds that
\be\label{ent61}
\|Z_4\|_{C^{1,\alpha}}\leq C_*(\|Z_1(r_s,\cdot)\|_{C^{1,\alpha}(\overline{E})}+\|R_4^1-R_4^2\|_{C^{1,\alpha}})\leq C_*(\epsilon+\delta_0)\|(\hat{{\bf Z}}, \hat{Z}_6)\|_w.
\ee

Finally, it remains to estimate $Z_6$. It follows from \eqref{shock50} that
\be\no
Z_6(y')=\frac{1}{a_1} Z_1(r_s,y')-\frac{1}{a_1}(R_1(\hat{{\bf V}}^1(r_s,y'),\hat{V}_6^1(y'))-R_1(\hat{{\bf V}}^2(r_s,y'),\hat{V}_6^2(y'))),
\ee
from which one may infer that
\be\no
\|Z_6\|_{C^{1,\alpha}(\overline{E})}&\leq& C_*(\|Z_1(r_s,\cdot)\|_{C^{1,\alpha}(\overline{E})}+\|R_1(\hat{{\bf V}}^1(r_s,\cdot),\hat{V}_6^1)-R_1(\hat{{\bf V}}^2(r_s,\cdot),\hat{V}_6^2)\|_{C^{1,\alpha}(\overline{E})})\\\label{shock61}
&\leq& C_*(\epsilon+\delta_0)\|(\hat{{\bf Z}}, \hat{Z}_6)\|_w.
\ee
Furthermore, it follows from \eqref{shock53} that
\be\no\begin{cases}
\frac{1}{r_s}\p_{y_2} Z_6(y')= a_0 Z_2(r_s,y') + (g_2(\hat{{\bf V}}^1(r_s,y'),\hat{V}^1_6(y'))-g_2(\hat{{\bf V}}^2(r_s,y'),\hat{V}^2_6(y'))), \ \ &\text{in }E,\\
\p_{y_3} Z_6(y')= a_0 Z_3(r_s,y') + (g_3(\hat{{\bf V}}^1(r_s,y'),\hat{V}^1_6(y'))-g_3(\hat{{\bf V}}^2(r_s,y'),\hat{V}^2_6(y'))), \ \ &\text{in }E.
\end{cases}\ee
Then one can conclude that
\be\no
\|(\nabla_{y'}Z_6)\|_{C^{1,\alpha}(\overline{E})}&\leq& C_*\sum_{j=2}^3(\|Z_j(r_s,\cdot)\|_{C^{1,\alpha}(\overline{E})}+\|g_j(\hat{{\bf V}}^1(r_s,\cdot),\hat{V}_6^1)-g_j(\hat{{\bf V}}^2(r_s,\cdot),\hat{V}_6^2)\|_{C^{1,\alpha}(\overline{E})})\\\label{shock63}
&\leq& C_*(\epsilon+\delta_0)\|(\hat{{\bf Z}}, \hat{Z}_6)\|_w.
\ee

Collecting all the estimates \eqref{ber62},\eqref{den63}, \eqref{ent61},\eqref{shock61} and \eqref{shock63} leads to
\be\no
\|({\bf Z}, Z_6)\|_w\leq C_*(\epsilon+\delta_0)\|(\hat{{\bf Z}}, \hat{Z}_6)\|_w.
\ee
Since $\delta_0=\sqrt{\epsilon}$, if $\epsilon<\epsilon_0=\frac{1}{16C_*^2}$, then $\|({\bf Z}, Z_6)\|_w\leq \frac{1}{2}\|(\hat{{\bf Z}}, \hat{Z}_6)\|_w$ so that the mapping $\mathcal{T}$ is a contraction operator in the weak norm $\|\cdot\|_{w}$. Thus there exists a unique fixed point $({\bf V},V_6)\in \mathcal{X}$ such that $\mathcal{T}({\bf V},V_6)=({\bf V},V_6)$. Let us recall the auxiliary function $\Pi$ that is associated with the fixed point $({\bf V},V_6)$ in solving the problem \eqref{den32} with \eqref{shock22}-\eqref{shock23}. To finish the proof of Theorem \ref{main}, we still need to prove that $\Pi\equiv 0$ in $\mathbb{D}$. Thanks to the definitions of $G_j({\bf V},V_6)$ for $j=1,2,3$, one may infer from \eqref{den32} that
\be\no\begin{cases}
-\p_{y_1}\Pi= D_2^{V_6} V_3- D_3^{V_6} V_2 -\tilde{\omega}_1,\\
-\frac{1}{y_1}\p_{y_2}\Pi= D_3^{V_6} V_1- D_1^{V_6} V_3- \frac{V_2\tilde{\omega}_1+ D_3^{V_6} V_5}{\bar{U}(D_0^{V_6})+V_1}+\frac{\bar{B}+V_5-\frac{1}{2}(\bar{U}(D_0^{V_6})+V_1)^2-\frac{1}{2}(V_2^2+V_3^2)}{\gamma(\bar{K}+V_4)(\bar{U}(D_0^{V_6})+V_1)}D_3^{V_6} V_4,\\
-\p_{y_3}\Pi= D_1^{V_6} V_2+\frac{V_2}{D_0^{V_6}}- D_2^{V_6} V_1- \frac{V_3\tilde{\omega}_1-D_2^{V_6} V_5}{\bar{U}(D_0^{V_6})+V_1}+\frac{\bar{B}+V_5-\frac{1}{2}(\bar{U}(D_0^{V_6})+V_1)^2-\frac{1}{2}(V_2^2+V_3^2)}{\gamma(\bar{K}+V_4)(\bar{U}(D_0^{V_6})+V_1)}D_2^{V_6} V_4.
\end{cases}\ee
Since $\tilde{\omega}_1$ satisfies \eqref{vor400} and the following commutator relations hold
\be\no
D_1^{V_6}D_2^{V_6}-D_2^{V_6}D_1^{V_6}=-\frac{1}{D_0^{V_6}} D_2^{V_6},\ \ D_2^{V_6}D_3^{V_6}=D_3^{V_6}D_2^{V_6}, \ \ D_1^{V_6}D_3^{V_6}=D_3^{V_6}D_1^{V_6},
\ee
one can conclude that
\be\no
-D_1^{V_6}(\p_{y_1}\Pi)-\frac{1}{D_0^{V_6}}(\p_{y_1}\Pi)- D_2^{V_6}(\frac{1}{y_1}\p_{y_2}\Pi)-D_3^{V_6}(\p_{y_3}\Pi)=0,\ \ \text{in }\mathbb{D}.
\ee
Since $\|V_6\|_{C^{3,\alpha}(\overline{E})}\leq \delta_0$, where $\delta_0$ is sufficiently small, thus $\Pi$ satisfies a second order uniformly elliptic equation without zeroth order term. Thanks to $\Pi=0$ on $\partial\mathbb{D}$, it follow directly from the maximum principle that $\Pi\equiv0$ in $\mathbb{D}$. Thus $({\bf V},V_6)$ is the desired solution. The proof of Theorem \ref{existence} is completed.

\section{Appendix: The compatibility conditions} \noindent

In this appendix, we verify the compatibility conditions for the supersonic flow and also the one on the intersection of the shock front and the cylinder walls.

\begin{proof}[Proof of Lemma \ref{supersonic}.] We prove only \eqref{super5}. The slip boundary condition \eqref{slip1} implies
\be\no\begin{cases}
\p_{\theta} P^-(r,\pm\theta_0,x_3)= 0,\ \ \forall r\in [r_1,r_2],x_3\in [-1,1],\\
\p_{x_3} P^-(r,\theta,\pm 1)= 0,\ \ \forall r\in [r_1,r_2],\theta\in [-\theta_0,\theta_0].
\end{cases}\ee

Differentiating the second, fourth and fifth equations in \eqref{euler-cyl} with respect to $\theta$, and restricting the resulting equations on the surface $\theta=\pm \theta_0$, one obtains
\be\no\begin{cases}
(U_1^-\p_r+U_3^-\p_{x_3})\p_{\theta} U_1^- + (\p_r U_1^-+\frac{\p_{\theta} U_2^-}{r}) \p_{\theta} U_1^- + \p_{x_3} U_1^- \p_{\theta} U_3^- -\frac{\p_r P^-}{\rho^2}\frac{\p \rho^-}{\p K} \p_{\theta} K^-=0,\\
(U_1^-\p_r+U_3^-\p_{x_3})\p_{\theta} U_3^- + \p_r U_3^-\p_{\theta} U_1^-+ (\frac{\p_{\theta} U_2^-}{r}+ \p_{x_3} U_3^-)\p_{\theta} U_3^- -\frac{\p_{x_3} P^-}{\rho^2} \frac{\p \rho^-}{\p K}\p_{\theta} K^-=0,\\
(U_1^-\p_r+U_3^-\p_{x_3})\p_{\theta} K^-+ \p_r K^- \p_{\theta} U_1^- + \p_{x_3} K^- \p_{\theta} U_3^- + \frac{\p_{\theta} U_2^-}{r}\p_{\theta} K^-=0.
\end{cases}\ee
This is a homogeneous system of transport equations. Due to the compatibility condition \eqref{super3} at the entrance $r=r_1$, one has
\be\no
(\p_{\theta} U_1^-,\p_{\theta} U_3^-, \p_{\theta} K^-)(r,\pm \theta_0, x_3)=0,\ \ \forall r\in [r_1,r_2], x_3\in [-1,1].
\ee

Differentiating the first equation in \eqref{euler-cyl} with respect to $\theta$, and restricting the resulting equations on the surface $\theta=\pm \theta_0$, one gets
\be\no
\p_{\theta}^2 U_2^-(r,\pm \theta_0,x_3)=0,\ \ \forall r\in [r_1,r_2], x_3\in [-1,1].
\ee

Similarly, differentiating the second, third and fifth equations in \eqref{euler-cyl} with respect to $x_3$, and restricting the resulting equations on the surface $x_3=\pm 1$, one obtains
\be\no\begin{cases}
(U_1^-\p_r+\frac{U_2^-}{r}\p_{\theta})\p_{x_3} U_1^- + (\p_r U_1^-+\p_{x_3} U_3^-) \p_{x_3} U_1^- \\
\q+(\frac{1}{r}\p_{\theta} U_1^--\frac{2 U_2^-}{r}) \p_{x_3} U_2^-  -\frac{\p_r P^-}{\rho^2}\frac{\p \rho^-}{\p K} \p_{x_3} K^-=0,\\
(U_1^-\p_r+\frac{U_2^-}{r}\p_{\theta})\p_{x_3} U_3^- + (\p_r U_2^-+\frac{U_2^-}{r})\p_{x_3} U_1^-\\
\q+ (\frac{\p_{\theta} U_2^-+ U_1^-}{r}+ \p_{x_3} U_3^-)\p_{x_3} U_2^- -\frac{\p_{\theta} P^-}{r\rho^2} \frac{\p \rho^-}{\p K}\p_{x_3} K^-=0,\\
(U_1^-\p_r+\frac{U_2^-}{r}\p_{\theta})\p_{x_3} K^-+ \p_r K^- \p_{x_3} U_1^- + \frac{1}{r}\p_{\theta} K^- \p_{x_3} U_2^- + \p_{x_3} U_3^-\p_{x_3} K^-=0,
\end{cases}\ee
which imply that
\be\no
(\p_{x_3} U_1^-,\p_{x_3} U_2^-, \p_{x_3} K^-)(r,\theta, \pm 1)=0,\ \ \forall r\in [r_1,r_2], \theta\in [-\theta_0,\theta_0].
\ee

Finally, differentiating the first equation in \eqref{euler-cyl} with respect to $x_3$, and restricting the resulting equations on the surface $x_3=\pm 1$ show that
\be\no
\p_{x_3}^2 U_3^-(r,\theta,\pm 1)=0,\ \ \forall r\in [r_1,r_2], \theta\in [-\theta_0,\theta_0].
\ee
\end{proof}

We further prove that the compatibility conditions \eqref{super5} is preserved when the supersonic flow moves across the shock. This is proved under the assumption that the existence of a transonic shock solution with $C^{2,\alpha}(\overline{\Omega^+})$ regularity in subsonic region and the $C^{3,\alpha}(\overline{E})$ regularity of the shock front.

\begin{lemma}\label{l41}({\bf Compatibility conditions on the intersection of the shock front and the nozzle wall.})
{\it Suppose that the supersonic incoming flow satisfies the compatibility conditions \eqref{super5}. Assume further that the system \eqref{euler-cyl}, \eqref{slip1}-\eqref{pressure} and \eqref{rh} has a piecewise smooth solution $(U_1^{\pm},U_2^{\pm},U_3^{\pm},P^{\pm},K^{\pm})$ defined on $\Omega^{\pm}$ respectively and the shock front $r=\xi(\theta,x_3)$ with the properties $(U_1^+,U_2^+,U_3^+,P^+,K^+)\in C^{2,\alpha}(\overline{\Omega^+})$ and $\xi\in C^{3,\alpha}(\overline{E})$. Then the following compatibility conditions hold on the intersection of the shock front and the cylinder wall
\be\label{c1}\begin{cases}
(U_{2}^+,\p_{\theta}^2U_{2}^+,\p_{\theta}(U_{1}^+,U_{3}^+, P^+,K^+))(r,\pm\theta_0, x_3)=0, \forall \xi(\pm\theta_0,x_3)\leq r\leq r_2, x_3\in [-1,1],\\
(U_{3}^+,\p_{x_3}^2U_{3}^+,\p_{x_3}(U_{1}^+,U_{2}^+, P^+,K^+))(r,\theta, \pm 1)=0, \forall \xi(\theta,\pm 1)\leq r\leq r_2,\theta\in[-\theta_0,\theta_0],\\
\p_{\theta}\xi(\pm\theta_0,x_3)=\p_{\theta}^3\xi(\pm\theta_0,x_3)=0,\ \text{on } x_3\in [-1,1],\\
\p_{x_3}\xi(\theta, \pm 1)=\p_{x_3}^3\xi(\theta, \pm 1)=0,\ \ \text{on } \theta\in [-\theta_0,\theta_0].
\end{cases}\ee

}\end{lemma}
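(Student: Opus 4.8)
The plan is to propagate the compatibility conditions from the trace of the shock front $\mathcal S$ on the cylinder walls into the whole subsonic region $\Omega^+$, in the spirit of the proof of Lemma \ref{supersonic}; the genuinely new ingredient is an analysis of the Rankine--Hugoniot relations \eqref{rh} (or rather their decomposed forms \eqref{shock11}--\eqref{shock13}) along the edges where $\mathcal S$ meets $\Gamma_{2,\pm}$ and $\Gamma_{3,\pm}$. I will describe only the face $\theta=\pm\theta_0$; the face $x_3=\pm1$ is entirely parallel after interchanging the roles of $(\theta,U_2,\Gamma_{2,\pm})$ and $(x_3,U_3,\Gamma_{3,\pm})$. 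First I would handle the shock position. Since the slip condition forces $U_2^+\equiv 0$ on $\Gamma_{2,\pm}$, hence $\p_r U_2^+=\p_{x_3}U_2^+=0$ there, evaluating the third momentum equation in \eqref{euler-cyl} on $\theta=\pm\theta_0$ gives $\p_\theta P^+(r,\pm\theta_0,x_3)=0$ (and similarly $\p_{x_3}P^+=0$ on $x_3=\pm1$). Because $U_2^+=U_2^-=0$ on $\theta=\pm\theta_0$, every jump in \eqref{rh} containing a factor $U_2$ vanishes on that edge; the third equation of \eqref{rh} then collapses to $-\frac1\xi\p_\theta\xi\,[P]=0$, and since $[P]\neq 0$ we obtain $\p_\theta\xi(\pm\theta_0,x_3)=0$, i.e. $\p_\theta\xi\equiv0$ along the whole edge. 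Differentiating this identity in $x_3$ gives the extra fact $\p_\theta\p_{x_3}\xi(\pm\theta_0,x_3)=0$, which will be essential below.

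Next I would establish the first order compatibility of the flow at the edge. Here I would differentiate the first two equations in \eqref{shock13} and the second equation in \eqref{shock12} in $\theta$ and restrict to $\theta=\pm\theta_0$, rather than working with \eqref{rh} directly, because those already peel off the principal part proportional to $W_6=\xi-r_s$ from the error terms $R_1,R_2,g_3$. Using $\p_\theta\xi=0$, $\p_\theta\p_{x_3}\xi=0$ and $\p_\theta P^+=0$ on the edge, the left-hand sides become exactly $\p_\theta U_1^+$, $\p_\theta K^+$ and $a_0\,\p_\theta U_3^+$ on the trace of $\mathcal S$, while the right-hand sides turn out to be linear in $(\p_\theta U_1^+,\p_\theta U_3^+,\p_\theta K^+)$ with coefficients of size $O(\epsilon)$: the ``supersonic part'' of each error term has vanishing $\theta$-derivative on the edge by \eqref{super5}, and the remaining parts are quadratic. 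Thus on the edge $(\p_\theta U_1^+,\p_\theta U_3^+,\p_\theta K^+)=M\,(\p_\theta U_1^+,\p_\theta U_3^+,\p_\theta K^+)$ with $\|M\|=O(\epsilon)<1$, which forces $\p_\theta U_1^+=\p_\theta U_3^+=\p_\theta K^+=0$ on the trace of $\mathcal S$ on $\Gamma_{2,\pm}$.

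Then I would propagate these and extract the second order conditions. Differentiating the second, fourth and fifth equations of \eqref{euler-cyl} in $\theta$ and restricting to $\theta=\pm\theta_0$ gives, just as in Lemma \ref{supersonic}, a homogeneous linear transport system for $(\p_\theta U_1^+,\p_\theta U_3^+,\p_\theta K^+)$ with transport field $U_1^+\p_r+U_3^+\p_{x_3}$; since $U_1^+>0$ and $U_3^+=0$ on $x_3=\pm1$, the integral curves issuing from the trace of $\mathcal S$ cover $\Gamma_{2,\pm}\cap\{\xi\le r\le r_2\}$, so the zero data from the previous step propagates and $\p_\theta U_1^+=\p_\theta U_3^+=\p_\theta K^+\equiv 0$, hence also $\p_\theta\rho^+\equiv 0$, on all of $\Gamma_{2,\pm}$. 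Substituting this into the $\theta$-derivative of the continuity equation restricted to $\theta=\pm\theta_0$ collapses it to $\frac{\rho^+}{r}\p_\theta^2 U_2^+=0$, so $\p_\theta^2 U_2^+\equiv 0$ on $\Gamma_{2,\pm}$. Lastly, $\p_\theta\xi=\xi J_2/J$ by \eqref{shock11}; differentiating twice and using that on the edge $J_2=0$, $\p_\theta J=0$ and $\p_\theta^2 J_2=0$ --- which follow from the compatibility relations just obtained together with \eqref{super5}, by the same bookkeeping that produces \eqref{j211} --- yields $\p_\theta^3\xi(\pm\theta_0,x_3)=0$. Repeating everything on $x_3=\pm 1$ gives \eqref{c1}.

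The hard part will be the two bookkeeping points: verifying, from the explicit formulas for $R_{0i},g_i,J,J_2$ (recorded in Section \ref{22} and the Appendix), that every ``supersonic'' contribution to the error terms has $\theta$-derivative vanishing on the edge --- this is precisely where \eqref{super5} is used --- and that the remaining contributions are at least quadratic, so that they produce only $O(\epsilon)$ coefficients in the linear system above and, once the first order compatibility is in place, also make $\p_\theta^2 J_2=0$ on the edge. In particular the $a_0\,\p_\theta U_3^+$ term in $\p_\theta g_3$ must be seen to cancel against the principal part of $\p_\theta(J_3/J)$, which uses the precise definition $a_0=(\bar\rho^+\bar U^+)(r_s)/[\bar P(r_s)]$. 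Everything else is a straightforward transcription of the argument in Lemma \ref{supersonic}.
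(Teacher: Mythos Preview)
Your plan is correct, but you take an unnecessarily roundabout route at the key step --- obtaining $(\p_\theta U_1^+,\p_\theta U_3^+,\p_\theta K^+)=0$ on the trace of $\mathcal S$ on $\Gamma_{2,\pm}$. You differentiate the decomposed forms \eqref{shock12}--\eqref{shock13} in $\theta$ and then argue that the contributions of $R_1,R_2,g_3$ are linear in the unknowns with $O(\epsilon)$ coefficients, so a contraction gives the conclusion. The paper instead differentiates the original Rankine--Hugoniot relations \eqref{rh} directly. After observing $\p_\theta\xi=0$, $\p_\theta P^+=0$ and $\p_\theta\p_{x_3}\xi=0$ on the edge (exactly as you do) and using \eqref{super5} for the minus side, the $\theta$--derivatives of the first, second and fourth equations in \eqref{rh} restricted to $\theta=\pm\theta_0$ give a closed $3\times 3$ linear system in $(\p_\theta U_1^+,\p_\theta U_3^+,\p_\theta\rho^+)$; subtracting $U_1^+$ (resp.\ $U_3^+$) times the first from the second (resp.\ third) yields
\[
\rho^+\bigl(U_1^+-\p_{x_3}\xi\,U_3^+\bigr)\,\p_\theta U_1^+=0,\qquad
\rho^+\bigl(U_1^+-\p_{x_3}\xi\,U_3^+\bigr)\,\p_\theta U_3^+=0,
\]
and since the common coefficient is nonzero one gets the conclusion (then $\p_\theta\rho^+=0$ from the first equation, hence $\p_\theta K^+=0$). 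This bypasses entirely the ``hard part'' you anticipate: no bookkeeping of the error terms $R_{0i},g_i$ is needed, and no $O(\epsilon)$ contraction is invoked --- only the nondegeneracy $U_1^+-\p_{x_3}\xi\,U_3^+\neq 0$. The remaining steps (propagation via the transport system, $\p_\theta^2 U_2^+=0$ from the continuity equation, and $\p_\theta^3\xi=0$ from differentiating \eqref{shock11} twice) are as you describe and match the paper.
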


\begin{proof}

It suffices to show that \eqref{c1} holds on the intersection of the shock front and the nozzle wall. Then the rest can be shown as for Lemma \ref{supersonic}.  It follows from the third equation in \eqref{rh} and \eqref{slip1} that
\be\label{perpen}\begin{cases}
\p_{\theta}\xi(\pm\theta_0, x_3)=0,\ \ \forall x_3\in [-1,1],\\
\p_{x_3}\xi(\theta, \pm 1)=0,\ \ \forall \theta\in [-\theta_0,\theta_0],\\
\end{cases}\ee
Substitute \eqref{perpen} into the last equation in \eqref{rh} to get
\be\label{shock-cyl1}\begin{cases}
\p_{x_3}\xi(\pm \theta_0, x_3) =\frac{[\rho U_1 U_3]}{[P+\rho U_3^2]},\ \ \forall x_3\in [-1,1],\\
\frac{\p_{\theta}\xi}{\xi}(\theta, \pm 1) =\frac{[\rho U_1 U_2]}{[P+\rho U_2^2]}(\theta,\pm 1),\ \ \forall \theta\in [-\theta_0,\theta_0].
\end{cases}\ee

Differentiating \eqref{rh} with respect to $\theta$ and restricting the resulting equations on $\theta=\pm \theta_0$, utilizing \eqref{slip1}, \eqref{perpen} and \eqref{shock-cyl1}, one obtains
\be\no\begin{cases}
\p_{\theta}(\rho^+ U_1^+)-\frac{[\rho U_1 U_3]}{[P+\rho U_3^2]} \p_{\theta}(\rho^+ U_3^+)=0,\\
\rho^+ U_1^+\p_{\theta} U_1^+-\frac{[\rho U_1 U_3]}{[P+\rho U_3^2]} \rho^+ U_3^+ \p_{\theta} U_1^+=0,\\
\rho^+ U_1^+\p_{\theta} U_3^+-\frac{[\rho U_1 U_3]}{[P+\rho U_3^2]}\rho^+ U_3^+\p_{\theta} U_3^+=0.
\end{cases}\ee

While differentiating \eqref{rh} with respect to $x_3$ and restricting the resulting equations on $x_3=\pm 1$, one can get by utilizing \eqref{slip1}, \eqref{perpen} and \eqref{shock-cyl1} that
\be\no\begin{cases}
\p_{x_3}(\rho^+ U_1^+)-\frac{[\rho U_1 U_2]}{[P+\rho U_2^2]} \p_{x_3}(\rho^+ U_2^+)=0,\\
\rho^+ U_1^+\p_{x_3} U_1^+-\frac{[\rho U_1 U_2]}{[P+\rho U_2^2]} \rho^+ U_2^+ \p_{x_3} U_1^+=0,\\
\rho^+ U_1^+\p_{x_3} U_2^+-\frac{[\rho U_1 U_2]}{[P+\rho U_2^2]}\rho^+ U_2^+\p_{x_3} U_1^+ =0.
\end{cases}\ee

Then
\be\no\begin{cases}
(\p_{\theta} U_1^+,\p_{\theta} U_3^+,\p_{\theta}\rho^+)(\xi(\pm\theta_0,x_3),\pm\theta_0, x_3)=0,\ \ \ \forall x_3\in [-1,1],\\
(\p_{x_3} U_1^+,\p_{x_3} U_2^+,\p_{x_3}\rho^+)(\xi(\theta,\pm 1),\theta, \pm 1)=0,\ \ \ \forall \theta\in [-\theta_0,\theta_0].
\end{cases}\ee
Then the first two conditions in \eqref{c1} follow as in the proof of Lemma \ref{supersonic}.

Differentiating the first (second) equation in \eqref{shock11} with respect to $\theta (x_3)$ twice and evaluating at $\theta=\pm \theta_0$ ($x_3\pm 1$ respectively) lead to
\be\no
&&\p_{\theta}^3\xi(\pm \theta_0,x_3)=0,\ \ \ \forall x_3\in [-1,1],\\\no
&&\p_{x_3}^3\xi(\theta,\pm 1)=0,\ \ \ \forall \theta\in [-\theta_0,\theta_0].
\ee
Thus Lemma \ref{l41} is verified.
\end{proof}

Finally, we give the explicit expressions of $J_i (i=2,3),J$ and $R_{0i}, i=1,2,3$ needed in \eqref{g21}-\eqref{shock400}, and \eqref{ent31}:
\be\no
&& J_2=\b(\tilde{\rho}({\bf V}(r_s,y'), V_6)V_3^2(r_s,y')+\tilde{P}({\bf V}(r_s,y'), V_6)-(\rho^-(U_3^-)^2+P^-)(r_s+V_6,y')\b)\\\no
&&\quad\times \b(\tilde{\rho}({\bf V}(r_s,y'), V_6)(\bar{U}(r_s+V_6)+V_1(r_s,y'))V_2(r_s,y')-(\rho^- U_1^- U_2^-)(r_s+V_6,y')\b)\\\no
&&\quad\quad-\b(\tilde{\rho}({\bf V}(r_s,y'), V_6)(\bar{U}(r_s+V_6)+V_1(r_s,y'))V_3(r_s,y')-(\rho^- U_1^- U_3^-)(r_s+V_6,y')\b)\\\label{j21}
&&\quad\times \b(\tilde{\rho}({\bf V}(r_s,y'), V_6)(V_2V_3)(r_s,y')-(\rho^-U_2^-U_3^-)(r_s+V_6,y')\b),\\\no
&& J_3=\b(\tilde{\rho}({\bf V}(r_s,y'), V_6)V_2^2(r_s,y')+\tilde{P}({\bf V}(r_s,y'), V_6)-(\rho^-(U_2^-)^2+P^-)(r_s+V_6,y')\b)\\\no
&&\quad\times\b(\tilde{\rho}({\bf V}(r_s,y'), V_6)(\bar{U}(r_s+V_6)+V_1(r_s,y'))V_3(r_s,y')-(\rho^- U_1^- U_3^-)(r_s+V_6,y')\b)\\\no
&&\quad\quad-\b(\tilde{\rho}({\bf V}(r_s,y'), V_6)(\bar{U}(r_s+V_6)+V_1(r_s,y'))V_2(r_s,y')-(\rho^- U_1^- U_2^-)(r_s+V_6,y')\b)\\\no
&&\quad\times \b(\tilde{\rho}({\bf V}(r_s,y'), V_6)(V_2V_3)(r_s,y')-(\rho^-U_2^-U_3^-)(r_s+V_6,y')\b),\\\no
&& J=\b(\tilde{\rho}({\bf V}(r_s,y'), V_6)V_2^2(r_s,y')+\tilde{P}({\bf V}(r_s,y'), V_6)-(\rho^-(U_2^-)^2+P^-)(r_s+V_6,y')\b)\\\no
&&\quad\times\b(\tilde{\rho}({\bf V}(r_s,y'), V_6)V_3^2(r_s,y')+\tilde{P}({\bf V}(r_s,y'), V_6)-(\rho^-(U_3^-)^2+P^-)(r_s+V_6,y')\b)\\\label{j1}
&&\quad\quad-\b(\tilde{\rho}({\bf V}(r_s,y'), V_6)(V_2V_3)(r_s,y')-(\rho^- U_2^- U_3^-)(r_s+V_6,y')\b)^2,
\ee
and
\be\no
&&R_{01}=-[\bar{\rho} \bar{U}](r_s+V_6)+(\rho^- U_1^-)(r_s+V_6,y')-(\bar{\rho}^-\bar{U}^-)(r_s+V_6)
\\\label{rv01}
&&-(V_1(r_s,y')+\bar{U}^+(r_s+V_6)-\bar{U}^+(r_s))(\tilde{\rho}({\bf V}(r_s,y'),V_6)-\bar{\rho}^+(r_s+V_6))\\\no
&&+ \sum_{i=2}^3 (\tilde{\rho}({\bf V}(r_s,y'),V_6)V_i(r_s,y')-(\rho^- U_i^-)(r_s+V_6,y'))\frac{J_i}{J}-(\bar{\rho}^+(r_s+V_6)-\bar{\rho}^+(r_s))V_1(r_s,y'),\\\no
&&R_{02}=-\b\{[\bar{\rho} \bar{U}^2+ \bar{P}](r_s +V_6)-\frac{1}{r_s}[\bar{P}(r_s)]V_6\b\}+ (\rho^- (U_1^-)^2 +P^-)(r_s+V_6,y')\\\no
&&-(\bar{\rho}^-(\bar{U}^-)^2+\bar{P}^-)(r_s+V_6)-\bigg\{\tilde{\rho}({\bf V}(r_s,y'),V_6)(\bar{U}(r_s+V_6)+V_1(r_s,y'))^2 \\\no
&&+\tilde{P}({\bf V}(r_s,y'),V_6)-(\bar{\rho}^+(\bar{U}^+)^2+\bar{P}^+)(r_s+V_6)-2(\bar{\rho}^+ \bar{U}^+)(r_s)V_1(r_s,y')\\\no
&&- \{(\bar{U}^+(r_s))^2+c^2(\bar{\rho}^+(r_s),\bar{K}^+)\}(\tilde{\rho}({\bf V}(r_s,y'),V_6)-\bar{\rho}^+(r_s+V_6))-(\bar{\rho}^+(r_s))^{\gamma} V_4(r_s,y')\bigg\}
\\\no
&&+\sum_{i=2}^3 \b(\tilde{\rho}({\bf V}(r_s,y'),V_6)(\bar{U}(r_s+V_6)+V_1(r_s,y'))V_i(r_s,y')-(\rho^- U_1^- U_i^-)(r_s+V_6,y')\b)\frac{J_i}{J},\\\no
&&R_{03}= B^-(r_s+ V_6,y')- \bar{B}^-- \bar{U}^+(r_s+V_6) V_1(r_s,y')-\frac{1}{2}\sum_{j=1}^3 V_j^2(r_s,y')+ \bar{U}^+(r_s) V_1(r_s,y')\\\label{rv03}
&&-\frac{\gamma}{\gamma-1}\b((\bar{K}^+ +V_4(r_s,y'))(\tilde{\rho}({\bf V}(r_s,y'), V_6))^{\gamma-1}-\bar{K}^+ (\bar{\rho}^+(r_s+V_6))^{\gamma-1}\b)\\\no
&& + \frac{c^2(\bar{\rho}^+(r_s),\bar{K}^+)}{\bar{\rho}^+(r_s)} (\tilde{\rho}({\bf V}(r_s,y'), V_6)-\bar{\rho}^+(r_s+V_6))+ \frac{\ga (\bar{\rho}^+(r_s))^{\gamma-1}}{(\ga-1)} V_4(r_s,y').
\ee

\section*{Acknowledgment}
Weng is supported by National Natural Science Foundation of China 12071359, 12221001. Xin is supported in part by the Zheng Ge Ru Foundation, Hong Kong RGC Earmarked Research Grants CUHK-14301421, CUHK-14300917, CUHK-14302819 and CUHK-14300819, the key project of NSFC (Grant No. 12131010) and by Guangdong Basic and Applied Basic Research Foundation 2020B1515310002.

\end{document}